\numberwithin{equation}{section} 
\newtheoremstyle{bfnote}
	{}{}
	{\itshape}{}
	{\bfseries}{.}
	{ }
	{\thmname{#1}\thmnumber{ #2}\thmnote{ (#3)}}
\newtheoremstyle{itremark}%
	{}{}%
	{}{}%
	{\itshape}{.}%
	{ }%
	{\thmname{#1}\thmnumber{ #2}\thmnote{ (#3)}}
\newtheoremstyle{itcase}%
	{}{}%
	{}{}%
	{\itshape}{:}%
	{ }%
	{\thmname{#1}\thmnote{ #3}}
\theoremstyle{bfnote}
\newtheorem{thm}{Theorem}[section]
\newtheorem{lem}[thm]{Lemma}
\newtheorem{prop}[thm]{Proposition}
\newtheorem{cor}[thm]{Corollary}
\theoremstyle{definition}
\newtheorem{Def}[thm]{Definition}
\theoremstyle{itremark}
\newtheorem{rem}[thm]{Remark}
\theoremstyle{itcase}
\newcommand{\Z}{\mathbb{Z}}
\newcommand{\R}{\mathbb{R}}
\newcommand{\C}{\mathbb{C}}
\newcommand{\F}{\mathscr{F}}
\newcommand{\G}{\mathscr{G}}
\newcommand{\M}{\mathscr{M}}
\newcommand{\D}{\mathscr{D}}
\newcommand{\flag}{\mathbf{H}}
\newcommand{\Dom}{\mathfrak{D}}
\DeclareMathOperator{\End}{End}
\DeclareMathOperator{\tr}{tr}
\DeclareMathOperator{\Span}{span}
\DeclareMathOperator{\Hom}{Hom}
\DeclareMathOperator{\Id}{Id}
\DeclareMathOperator{\Vol}{Vol}
\DeclareMathOperator{\codim}{codim}
\date{}
\author{\textsc{Thibault Langlais}\footnote{Mathematical Institute, University of Oxford, Oxford OX2 6GG, United Kingdom. \newline E-mail address: langlais@maths.ox.ac.uk. ORCID iD: \href{https://orcid.org/0000-0002-6434-2988}{0000-0002-6434-2988}.}}
\title{\bfseries{Geometry and periods of $G_2$-moduli spaces}}
\begin{document}


\maketitle

\begin{abstract}
    \noindent{}This paper is concerned with the geometry of the moduli space $\mathscr{M}$ of torsion-free $G_2$-structures on a compact $G_2$-manifold $M$, equipped with the volume-normalised $L^2$-metric $\mathscr{G}$. When $b^1(M) = 0$, this metric is known to be of Hessian type and to admit a global potential. Here we give a new description of the geometry of $\mathscr{M}$, based on the observation that there is a natural way to immerse the moduli space into a homogeneous space $\mathfrak{D}$ diffeomorphic to $GL(n+1)/ (\{\pm 1\} \times O(n))$, where $n = b^3(M) - 1$. We point out that the formal properties of this immersion $\Phi : \mathscr{M} \rightarrow \mathfrak{D}$ are very similar to those of the period map defined on the moduli spaces of Calabi--Yau threefolds. With a view to understand the curvatures of $\mathscr{G}$, we also derive a new formula for the fourth derivative of the potential and relate it to the second fundamental form of $\Phi(\mathscr{M}) \subset \mathfrak{D}$.
\end{abstract}

\small
\tableofcontents

\normalsize 



    \section{Introduction and motivation} 

$G_2$-manifolds are an exceptional class of Riemannian $7$-manifolds with remarkable geometric features; in particular, they are automatically Ricci-flat and admit non-trivial parallel spinors. For these reasons, $G_2$-manifolds are important in quantum gravity theories, especially in M-theory where they play the same role as Calabi--Yau manifolds in string theory. Both in the $G_2$ and Calabi--Yau cases, an important problem in mathematics and physics is to describe the geometry of the moduli spaces.

For Calabi--Yau manifolds, there is an extensive literature in complex geometry describing the properties of the moduli spaces, and one can consider separately the deformations of the K\"ahler class and of the complex structure. For the first case, the K\"ahler classes of a compact Calabi--Yau manifold $Y$ form an open convex cone inside $H^{1,1}(Y)$. It admits a natural Riemannian metric which turns out to be the Hessian of the potential $-\log \Vol$, where the volume of a K\"ahler class is (up to a combinatorial factor) just its top exterior power evaluated on the fundamental class of $Y$. Therefore all the geometric invariants of this metric are determined by the derivatives of the potential, which can in turn be expressed in terms of the intersection form on $H^{\bullet}(Y)$, providing a link between the topology of $Y$ and the geometry of its K\"ahler cone \cite{huybrechts2001products,wilson2004sectional,trenner2011asymptotic}. 

On the other hand, for the moduli space of complex structures of (polarised) Calabi--Yau manifolds the relevant metric to consider is the Weil--Petersson metric. It was shown by Tian \cite{tian1987smoothness} and Todorov \cite{todorov1989weil} that this metric is determined by the \emph{period map}, which was first introduced by Griffiths \cite{griffiths1968periodsi,griffiths1968periodsii}. Using results of Schmid on the asymptotic behaviour of the period map \cite{schmid1973variation} and Viehweg on the quasiprojectivity of the moduli spaces \cite{viehweg1991quasi}, the relation between the Weil--Petersson metric and the period map was axiomatised by Lu and Sun \cite{lu2001hodge,lu2004weil}, who notably deduced the finiteness and rationality of the volume of the moduli spaces \cite{lu2006weil}.

By contrast, much less is known about the geometry of $G_2$-moduli spaces. Joyce proved that the moduli space of torsion-free $G_2$-structures on a compact $7$-manifold $M$ (when nonempty) is a smooth manifold of dimension $b^3(M)$, locally modelled on an open cone in $H^3(M)$ \cite{joyce1996compacti}; thus the moduli space is even an \emph{affine manifold}. Moreover, it is naturally immersed as a Lagrangian submanifold of $H^3(M) \oplus H^4(M)$ \cite{joyce1996compactii}. However, all of these results are local, and little is known about the global structure of the moduli space, partly because of the lack of an analogue of Yau's theorem \cite{yau1978ricci} in $G_2$-geometry. Nevertheless, there have been some recent advances on the topology of the moduli spaces: it is known that the quotient of the space of torsion-free $G_2$-structures by the full space of diffeomorphisms is disconnected in some cases \cite{crowley2015analytic}; moreover, the quotient of the space of torsion-free $G_2$-structures by the group of diffeomorphisms isotopic to the identity may be non-aspherical \cite{crowley2025path}.

From a geometric perspective, Hitchin first noticed that the Hessian of the volume functional is nondegenerate \cite{hitchin2000geometry}, and when $b^1(M) = 0$ it defines a metric with Lorentzian signature on the moduli space. Around the same time, it was pointed out in the physics literature that the Hessian of the potential $-3\log \Vol$ is positive definite, and coincides with the volume-normalised $L^2$-metric \cite{gukov2000solitons,beasley2002note,gutowski2001moduli,house2005m}. Unlike K\"ahler cones however, the volume is not a merely polynomial function of the cohomology class of the $G_2$-structure, and the high degree of nonlinearity of this function makes the geometry of $G_2$-moduli spaces very difficult to understand. Grigorian and Yau \cite{grigorian2009local} obtained formulas for the derivatives of the potential up to order $4$, but their expressions are difficult to interpret geometrically. Nevertheless, an interesting feature of these formulas is their similarity with the equations describing the geometry of the moduli spaces of complex structures on Calabi--Yau threefolds. Further similarities were exhibited by the work of Karigiannis and Leung \cite{karigiannis2009hodge}, who developed a notion of Intermediate Jacobians for $G_2$-manifolds. There is also ongoing work by Karigiannis and Loftin \cite{karigiannis2025octonionic} about the curvatures of the moduli spaces, motivated by the conjectured existence of universal bounds for the sectional curvatures of the K\"ahler cone of Calabi--Yau manifolds \cite{wilson2004sectional}. Recently, the author obtained sufficient conditions for the limit of a one-parameter family of degenerating $G_2$-manifolds to be at finite distance in the moduli space and proved that $G_2$-moduli spaces are not always complete \cite{langlais2024incompleteness}. 

In the present paper, we give a new description of the geometry of $G_2$-moduli spaces and reinterpret the similarities with the Calabi--Yau case. In the future, we hope that this perspective could be used to gain further insights about $G_2$-moduli spaces.

    \subsection*{Organisation of the paper}

Let us give a brief overview of the results and the organisation of the paper. First, we gather some background about $G_2$-geometry in Section \ref{section:background}. In Section \ref{section:derivatives}, we derive a new formula for the fourth derivative of the potential which only depends on the lower order derivatives and some extra terms related to the variations of the space of harmonic forms. We prove that when $M = T^7 / \Gamma$ or $M = (T^3 \times K3) / \Gamma$ these extra terms vanish, and that the resulting equation for the potential implies that the moduli spaces are locally symmetric. Beyond these cases, the extra terms cannot be computed explicitly, impeding a further understanding of the geometry of the moduli space using computations in local coordinates. Motivated by this difficulty, we introduce a new perspective in Section \ref{section:periods}. We observe that the variation of the Hodge decomposition of $H^3(M) \oplus H^4(M)$ defines an immersion of the moduli space into a homogeneous space $\Dom$ diffeomorphic to $GL(n+1) / (\{ \pm 1 \} \times O(n))$, where $n = b^3(M) - 1$. We show that the properties of this map are very analogous to the period map of Calabi--Yau threefolds, and that it determines the geometry of the moduli space in a natural way. Finally, we relate this to the results of the previous section, by proving that the extra terms in the formula of the fourth derivative of the potential are intrinsically related to the second fundamental form of the moduli space seen as an immersed submanifold of $\Dom$.

    \subsection*{Acknowledgements}

I would like to thank Spiro Karigiannis for interesting discussions related to the material presented in this article, and my supervisor Jason D. Lotay for his advice and support. I am also grateful to an anonymous referee for their valuable comments, which helped me greatly improve the exposition of the present article.

This research is supported by scholarships from the Clarendon Fund and the Saven European Programme. Part of this material is also based upon work supported by the National Science Foundation under Grant No. DMS-1928930, while the author was in residence at the Simons Laufer Mathematical Sciences Institute (formerly MSRI) in Berkeley, California, during the Fall 2024 semester.



    \section{Background on $G_2$-geometry}      \label{section:background}

This section gathers some basic notions of $G_2$-geometry. In \S\ref{subsection:posform}, we recall the definition of positive forms in $\R^7$ and a few elements of their linear algebra. $G_2$-manifolds are introduced in \S\ref{subsection:g2manifolds}, and their moduli spaces in \S\ref{subsection:modulispaces}.

    \subsection{Positive forms on $\R^7$}       \label{subsection:posform}

Let us consider $\R^7$ equipped with its standard orientation and denote by $\R^*_7$ its dual space. A $3$-from $\varphi \in \Lambda^3 \R^*_7$ is said to be \emph{positive} if for any $v \in \R^7 \backslash \{0\}$ we have
\begin{equation}
    (v \lrcorner \varphi) \wedge (v \lrcorner \varphi) \wedge \varphi > 0
\end{equation}
relative to the standard orientation. Here $\cdot \lrcorner \cdot$ denotes the interior product of a vector in $\R^7$ and an alternating form in $\Lambda (\R^*_7)$. The set $\Lambda^3_+ \R^*_7$ of positive forms is nonempty and open in $\Lambda^3 \R^*_7$, and is acted upon transitively by the group of orientation-preserving automorphisms $GL_+(7)$. The stabiliser of any positive form is conjugate to the group $G_2 \subset SO(7)$. This is a compact, simple Lie group of dimension $14$. A positive form $\varphi \in \Lambda^3_+ \R^*_7$ canonically determines an inner product on $\R^7$, which we denote by $g_\varphi$ or $\langle \cdot, \cdot \rangle_\varphi$, and a $7$-from $\mu_\varphi \in \Lambda^7 \R^*_7$ characterised by
\begin{align}
    \begin{split}
        (v \lrcorner \varphi) \wedge (u \lrcorner \varphi) \wedge \varphi & = 6 \langle u, v \rangle_\varphi \mu_\varphi, ~~ \forall u,v \in \R^7, ~~ \text{and} \\
        |\varphi|_{g_\varphi}^2 & = 7 .
    \end{split}
\end{align}
The dual $4$-form of $\varphi$ with respect to the Hodge operator $*_\varphi$ associated with $g_\varphi$ is commonly denoted by $\Theta(\varphi) \in \Lambda^4 \R^*_7$. The maps $\varphi \mapsto g_\varphi$, $\varphi \mapsto \mu_\varphi$, $\varphi \mapsto *_\varphi$ and $\varphi \mapsto \Theta(\varphi)$ are non-linear and equivariant under the action of $GL_+(7)$.

Let us fix a positive form $\varphi$ on $\R^7$, and identify the stabiliser of $\varphi$ with $G_2$. The exterior algebra $\Lambda \R^*_7$ can be decomposed into irreducible representations of $G_2$ as follows. The representation $\R^*_7$ is irreducible, as $G_2$ acts transitively on the unit sphere. The space of $2$-forms can be decomposed as:
\begin{equation*}
    \Lambda^2 \R^*_7 = \Lambda^2_{14} \oplus \Lambda^2_7
\end{equation*}
where $\Lambda^2_{14}$ is isomorphic to the Lie algebra of $G_2$ and $\Lambda^2_7 \simeq \R^*_7$. In particular, any $\omega \in \Lambda^2 \R^*_7$ can be written uniquely as
\begin{equation*}
    \omega = v \lrcorner \varphi + \chi, ~~~ v \in \R^7, ~ \chi \in \Lambda^2_{14} .
\end{equation*}
In order to decompose $\Lambda^3 \R^*_7$, let us introduce a bilinear map $\End(\R^7) \otimes \Lambda(\R^*_7) \rightarrow \Lambda(\R^*_7)$ defined by:
\begin{equation}
    h \cdot \eta = \left. \frac{d}{dt} \right|_{t=0} (e^{th})^* \eta = \eta(h \cdot, \cdot, \cdots) + \cdots + \eta(\cdots, \cdot, h\cdot), ~~~ \forall (h,\eta) \in \End(\R^7) \times \Lambda(\R^*_7) .
\end{equation}
Up to a sign, this is the derivative of the action of $GL(7)$ on $\Lambda \R^*_7$. Since $GL_+(7)$ acts transitively on $\Lambda^3_+ \R^*_7$ which is open in $\Lambda^3 \R^*_7$, the map $h \in \End(\R^7) \mapsto h \cdot \varphi \in \Lambda^3 \R^*_7$ is onto. The representation $\End(\R^7)$ can be decomposed as:
\begin{equation*}
    \End(\R^7) \simeq \Lambda^2 \R^*_7 \oplus S^2 \R^*_7 \simeq \Lambda^2_{14} \oplus \Lambda^2_7 \oplus \R \oplus S^2_0 \R^*_7 
\end{equation*}
where $\Lambda^2_{14}$ is identified with the Lie algebra of $G_2$ and $S^2_0 \R^*_7$ is isomorphic to the space of trace-free self-adjoint endomorphisms with respect to $g_\varphi$. The kernel of the above map $\End(\R^7) \rightarrow \Lambda^3 \R^*_7$ is $\Lambda^2_{14}$, and therefore we obtain the decomposition:
\begin{equation*}
    \Lambda^3 \R^*_7 = \Lambda^3_1 \oplus \Lambda^3_7 \oplus \Lambda^3_{27}
\end{equation*}
where $\Lambda^3_7 \simeq \R^*_7$ and $\Lambda^3_{27} \simeq S^2_0 \R^*_7$. In particular, any $3$-form $\eta \in \Lambda^3 \R^*_7$ can be written uniquely as
\begin{equation*}
    \eta = \lambda \varphi + v \lrcorner \Theta(\varphi) + \nu, ~~~ \lambda \in \R, ~ v \in \R^7, ~ \nu \in \Lambda^3_{27} .
\end{equation*}
As $\Lambda^k \R^*_7 \simeq \Lambda^{7-k} \R^*_7$ under Hodge duality, this give a full decomposition of $\Lambda \R^*_7$. We denote by $\pi_m$ the projection of $\Lambda^k \R^*_7$ onto $\Lambda^k_m$.

We finish these generalities with a few useful formulas for the first variation of various tensors associated with an inner product or a positive form on $\R^7$, and some interesting consequences. First, we begin with some properties of the bilinear map $\End(\R^7) \otimes \Lambda(\R^*_7) \rightarrow \Lambda(\R^*_7)$ previously defined:

\begin{lem}     \label{lem:derivation}
    For $h \in \End(\R^7)$ we denote by $\delta_h : \Lambda(\R^*_7) \rightarrow \Lambda(\R^*_7)$ the linear map $\eta \mapsto h \cdot \eta$. Then for $h,h^\prime \in \End(\R^7)$ the following properties are satisfied:

    \begin{enumerate}[(i)]
        \item The map $\delta_h$ is a derivation of degree $0$ of $\Lambda(\R^*_7)$. That is, it preserves the degree of forms and $h \cdot (\omega \wedge \omega^\prime) = (h \cdot \omega) \wedge \omega^\prime + \omega \wedge (h \cdot \omega^\prime)$ for any $\omega, \omega^\prime \in \Lambda(\R^*_7)$.

        \item $[\delta_h, \delta_{h^\prime}] = - \delta_{[h,h^\prime]}$. 
        
        \item If $h$ is (anti-)self-adjoint for some inner product on $\R^7$, then $\delta_h$ is (anti-)self-adjoint for the induced inner product on $\Lambda (\R^*_7)$.
    \end{enumerate}
\end{lem}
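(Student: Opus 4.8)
The plan is to verify each of the three properties directly from the definition $h \cdot \eta = \left.\frac{d}{dt}\right|_{t=0} (e^{th})^* \eta$, exploiting the fact that pullback by a diffeomorphism (here, the linear map $e^{th}$) is an algebra homomorphism of $\Lambda(\R^*_7)$ that also behaves well with respect to an inner product when the map is orthogonal.

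For part (i), I would observe that for each fixed $t$, the map $(e^{th})^*$ preserves the degree of forms (since $e^{th}$ is linear) and satisfies $(e^{th})^*(\omega \wedge \omega') = (e^{th})^*\omega \wedge (e^{th})^*\omega'$. Differentiating this identity at $t = 0$ and using the Leibniz rule for the derivative of a product gives exactly the degree-$0$ derivation property $h \cdot (\omega \wedge \omega') = (h \cdot \omega)\wedge \omega' + \omega \wedge (h \cdot \omega')$. (Alternatively one can read it off the explicit formula $\eta(h\cdot,\cdot,\dots) + \dots$, but the pullback argument is cleaner.)

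For part (ii), the cleanest route is the standard fact that the derivative of a left action is an \emph{anti}-homomorphism of Lie algebras — or one can compute the commutator of the two first-order operators directly. Writing $A(t) = (e^{th})^*$ and $B(s) = (e^{sh'})^*$, these are the flows associated to the vector fields $\delta_h, \delta_{h'}$ on the vector space $\Lambda(\R^*_7)$; since $\eta \mapsto (e^{tk})^*\eta$ is a linear representation, $\delta_h$ is the image of $h$ under the induced Lie algebra representation $\End(\R^7) \to \End(\Lambda(\R^*_7))$, which reverses brackets because pullback reverses composition: $(AB)^* = B^* A^*$. Hence $[\delta_h,\delta_{h'}] = -\delta_{[h,h']}$. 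I would include a two-line computation confirming this, expanding $(e^{th}e^{sh'})^*$ to second order and comparing with $(e^{s h'} e^{t h})^*$, or simply citing that $\rho(k) := ((e^{k})^*)$ is an anti-homomorphism so its differential satisfies $d\rho([h,h']) = -[d\rho(h), d\rho(h')]$.

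For part (iii), suppose $h$ is self-adjoint with respect to an inner product $g$ on $\R^7$. Then $e^{th}$ is $g$-self-adjoint for every $t$, and consequently $(e^{th})^*$ is self-adjoint with respect to the induced inner product on $\Lambda(\R^*_7)$ (the induced inner product is natural under linear isomorphisms, so $(e^{th})^*$ corresponds to the pullback/transpose of a self-adjoint map, hence is self-adjoint). Differentiating the identity $\langle (e^{th})^*\alpha, \beta \rangle = \langle \alpha, (e^{th})^*\beta\rangle$ at $t = 0$ yields $\langle \delta_h \alpha, \beta\rangle = \langle \alpha, \delta_h \beta\rangle$, i.e.\ $\delta_h$ is self-adjoint. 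The anti-self-adjoint case is identical, using that $e^{th}$ is then $g$-orthogonal, so $(e^{th})^*$ is orthogonal on $\Lambda(\R^*_7)$ and differentiating $\langle (e^{th})^*\alpha, (e^{th})^*\beta\rangle = \langle \alpha,\beta\rangle$ gives $\langle \delta_h\alpha,\beta\rangle + \langle\alpha,\delta_h\beta\rangle = 0$. The only mild subtlety — and the step I would be most careful about — is making sure the "induced inner product on $\Lambda(\R^*_7)$" is the one functorially associated to $g$ (so that naturality under $GL(7)$ applies); once that is pinned down, all three parts are short formal consequences of functoriality of pullback, with no genuine obstacle.
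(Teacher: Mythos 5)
Your proposal is correct and follows essentially the same route as the paper: part (i) by differentiating the pullback identity $(e^{th})^*(\omega\wedge\omega^\prime)=(e^{th})^*\omega\wedge(e^{th})^*\omega^\prime$, part (ii) by recognising $\delta$ as the negative of the natural Lie algebra action (pullback reverses composition), and part (iii) by naturality of the induced inner product. The only cosmetic difference is in (iii), where the paper checks (anti-)self-adjointness of $\delta_h$ directly on $\R^*_7$ and extends to $\Lambda(\R^*_7)$, while you integrate up to $(e^{th})^*$ and differentiate; both are valid.
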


\begin{proof}
    That $\delta_h$ is a derivation of degree $0$ can be seen by differentiating the identity $(e^{th})^* (\omega \wedge \omega^\prime) = (e^{th})^* \omega \wedge (e^{th})^* \omega^\prime$. Moreover, by definition $\delta : \End(\R^7) \rightarrow \End(\Lambda(\R^*_7))$ is the negative of the natural action of the Lie algebra $\End(\R^7)$ on $\Lambda(\R^*_7)$, and thus $[\delta_h, \delta_{h^\prime}] = - \delta_{[h,h^\prime]}$. Last, if $h$ is $g$-self-adjoint and $\omega \in \R^*_7$, then the dual vector of $\delta_h \omega = \omega \circ h$ is $h(v)$, where $v \in \R^7$ is dual to $\omega$. From this it follows that $\delta_h$ is self-adjoint for the inner product induced by $g$ on $\R^*_7$, and thus on $\Lambda(\R^*_7)$. We can argue similarly for the case where $h$ is anti-self-adjoint for $g$, since then the dual vector of $\omega \circ h \in \R^*_7$ is $-h(v)$ if $v \in \R^7$ is the vector dual to $\omega$.
\end{proof}

The next lemma gathers a few useful identities which are easy to check.

\begin{lem}    \label{lem:firstvariationh}
    Let $g$ be an inner product on $\R^7$ and $h \in \End(\R^7)$, and consider a $1$-parameter family of inner products $g_t$ such that $g_0 = g$ and $\left. \frac{d g_t}{dt} \right|_{t=0} = g(h \cdot, \cdot) + g(\cdot, h \cdot)$. Let $\omega, \omega^\prime \in \Lambda^k \R^*_7$ for some $0 \leq k \leq 7$. Then we have the following first variation formulas:
    \begin{align*}
        \left. \frac{d}{dt} \right|_{t=0} \langle \omega, \omega^\prime \rangle_{g_t}  & = - \langle h \cdot \omega, \omega^\prime \rangle_g - \langle \omega, h \cdot \omega^\prime \rangle_g , \\
        \left. \frac{d}{dt} \right|_{t=0} *_{g_t} \omega & = h \cdot (*_g \omega) - *_g (h \cdot \omega) , \\
        \left. \frac{d}{dt} \right|_{t=0} \mu_{g_t} & = \tr(h) \mu_g .
    \end{align*}
\end{lem}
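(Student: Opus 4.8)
The plan is to reduce all three identities to the single special case $g_t = (e^{th})^* g$, for which the structures on forms transform by the pullbacks $(e^{th})^*$ --- and these pullbacks, by the very definition of $\delta_h$, form the one-parameter group generated by $\delta_h$.

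First I would observe that $\langle \omega, \omega' \rangle_{g_t}$, $*_{g_t}\omega$ and $\mu_{g_t}$ depend on $t$ only through $g_t$, and that this dependence is smooth on the open set of inner products on $\R^7$ (the Hodge operator and $\mu_{g_t}$ involve $\sqrt{\det g_t}$, which is smooth in $g_t$). Hence the derivatives at $t = 0$ depend only on the $1$-jet $\bigl(g,\, g(h\cdot,\cdot) + g(\cdot, h\cdot)\bigr)$ of the family $g_t$. Since $\frac{d}{dt}\big|_{t=0}(e^{th})^* g = g(h\cdot,\cdot) + g(\cdot, h\cdot)$, we may assume without loss of generality that $g_t = (e^{th})^* g$ for all $t \in \R$.

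With this choice, $e^{th} : (\R^7, g_t) \to (\R^7, g)$ is by construction an isometry, and it is orientation-preserving because $\det e^{th} = e^{t\tr h} > 0$. Its pullback on $\Lambda \R^*_7$ equals $e^{t\delta_h}$ (the pullbacks $(e^{th})^*$ form a one-parameter group with generator $\delta_h$, all lying in $GL_+(7)$). Since pullback by an orientation-preserving linear isometry preserves the induced inner products on each $\Lambda^k \R^*_7$, intertwines the Hodge operators, and fixes constants, this gives
\begin{align*}
    \langle \omega, \omega' \rangle_{g_t} &= \langle e^{-t\delta_h}\omega,\, e^{-t\delta_h}\omega'\rangle_g, \\
    *_{g_t}\omega &= e^{t\delta_h}\bigl(*_g(e^{-t\delta_h}\omega)\bigr), \\
    \mu_{g_t} &= e^{t\delta_h}\mu_g = \det(e^{th})\,\mu_g,
\end{align*}
where in the last line I use $\mu_{g_t} = *_{g_t}1$ together with the fact that $\mu_g$ is a top form. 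Differentiating each at $t = 0$ and recalling $\delta_h\eta = h\cdot\eta$ yields the three stated formulas: the product rule on the first line gives $-\langle h\cdot\omega,\omega'\rangle_g - \langle\omega, h\cdot\omega'\rangle_g$; differentiating $e^{t\delta_h}\circ *_g\circ e^{-t\delta_h}$ gives $h\cdot(*_g\omega) - *_g(h\cdot\omega)$; and the last line gives $\tr(h)\,\mu_g$.

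The only steps requiring care are the two naturality statements invoked in the last paragraph --- that an orientation-preserving linear isometry induces an isometry on each $\Lambda^k\R^*_7$ and commutes with the Hodge star --- which are standard but where one must keep track of orientations; the inequality $\det e^{th} > 0$ is exactly what licenses them, so no smallness of $t$ is needed. A more pedestrian alternative would be to prove all three formulas by differentiating the coordinate expressions directly, using that $\frac{d}{dt}\big|_{t=0} g_t^{ij}$ is the expression in $g$ and $h$ obtained from $\frac{d}{dt}(G^{-1}) = -G^{-1}\dot G G^{-1}$; this works but is heavier on index bookkeeping, so the route above is to be preferred.
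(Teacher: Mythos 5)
Your proof is correct. The paper actually states Lemma \ref{lem:firstvariationh} without proof, treating these as standard first-variation formulas, so there is no argument to compare against line by line; but your reduction to the model family $g_t = (e^{th})^*g$ (justified by the observation that the derivatives depend only on the $1$-jet of $g_t$) is precisely the mechanism the paper relies on implicitly, since the definition of $h \cdot \eta$ as $\left. \frac{d}{dt}\right|_{t=0} (e^{th})^*\eta$ and the proof of Corollary \ref{cor:hstar}, which works with the same family $(e^{th})^*g$, both presuppose exactly this computation.
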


These two lemmas have a few consequences that will be useful in the rest of the article. First note that if if $h$ is self-adjoint for $g$, then $\delta_h$ is self-adjoint for the inner product induced by $g$ on $\Lambda(\R^*_7)$ and thus with the notations of the above lemma we have
\begin{equation*}
    \left. \frac{d}{dt} \right|_{t=0} \langle \omega, \omega^\prime \rangle_{g_t} = - 2 \langle h \cdot \omega, \omega^\prime \rangle_g
\end{equation*}
for any $\omega, \omega^\prime \in \Lambda^k \R^*_7$. This implies:

\begin{cor}         \label{cor:hstar}
    Let $h, h^\prime \in \End(\R^7)$, and suppose that $h$ is a trace-free endomorphism, self-adjoint with respect to an inner product $g$, and $h^\prime$ is anti-self-adjoint for $g$. Then for any $\omega \in \Lambda^k \R^*_7$ we have:
    \begin{equation*}
        h \cdot (*_g \omega) = - *_g (h \cdot \omega), ~~~ \text{and} ~~~ h^\prime \cdot (*_g \omega) = *_g(h^\prime \cdot \omega) .
    \end{equation*}
\end{cor}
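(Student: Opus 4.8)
The plan is to derive both identities from the first-variation formulas in Lemma~\ref{lem:firstvariationh}, exploiting the nondegeneracy of the wedge pairing $\Lambda^k \R^*_7 \otimes \Lambda^{7-k} \R^*_7 \to \Lambda^7 \R^*_7$. The anti-self-adjoint case is immediate: if $h^\prime$ is anti-self-adjoint for $g$ then $g(h^\prime \cdot, \cdot) + g(\cdot, h^\prime \cdot) = 0$, so the constant family $g_t \equiv g$ satisfies the hypothesis of Lemma~\ref{lem:firstvariationh} with $h = h^\prime$, and the second formula there reads $0 = h^\prime \cdot (*_g \omega) - *_g(h^\prime \cdot \omega)$, which is the desired identity. (Equivalently, $\tr(h^\prime) = 0$ forces $e^{th^\prime} \in SO(\R^7, g)$, so $(e^{th^\prime})^*$ commutes with $*_g$, and one differentiates at $t = 0$.)

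For the trace-free self-adjoint case I would pick a family $g_t$ with $g_0 = g$ and $\left.\frac{d g_t}{dt}\right|_{t=0} = g(h \cdot, \cdot) + g(\cdot, h \cdot)$, fix $\omega, \omega^\prime \in \Lambda^k \R^*_7$, and differentiate the defining identity $\omega \wedge *_{g_t} \omega^\prime = \langle \omega, \omega^\prime \rangle_{g_t}\, \mu_{g_t}$ at $t = 0$. Since $h$ is self-adjoint, $\delta_h$ is self-adjoint for the induced inner product (part (iii) of the first lemma), so $\left.\frac{d}{dt}\right|_{t=0} \langle \omega, \omega^\prime \rangle_{g_t} = -2 \langle h \cdot \omega, \omega^\prime \rangle_g$; and $\tr(h) = 0$ kills the variation of $\mu_{g_t}$. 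Hence
\[
\omega \wedge \left.\frac{d}{dt}\right|_{t=0} *_{g_t} \omega^\prime = -2 \langle h \cdot \omega, \omega^\prime \rangle_g\, \mu_g = -2 \langle \omega, h \cdot \omega^\prime \rangle_g\, \mu_g = -2\, \omega \wedge *_g(h \cdot \omega^\prime),
\]
the middle equality again using self-adjointness of $\delta_h$. As this holds for all $\omega$ and the wedge pairing is nondegenerate, $\left.\frac{d}{dt}\right|_{t=0} *_{g_t} \omega^\prime = -2 *_g(h \cdot \omega^\prime)$; comparing with the second formula of Lemma~\ref{lem:firstvariationh}, which equates this derivative with $h \cdot (*_g \omega^\prime) - *_g(h \cdot \omega^\prime)$, gives $h \cdot (*_g \omega^\prime) = -*_g(h \cdot \omega^\prime)$.

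I do not expect a genuine obstacle here. The only points to watch are that the trace-free hypothesis is precisely what makes the volume-form term disappear, and that self-adjointness of $h$ (equivalently of $\delta_h$) gets used twice — in the variation of the inner product and in moving $h \cdot$ across the pairing. One could instead run the wedge-product computation on its own without referencing the variation formula for $*_{g_t}$, but invoking Lemma~\ref{lem:firstvariationh} keeps the argument short.
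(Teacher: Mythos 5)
Your proof is correct and follows essentially the same route as the paper: for the trace-free self-adjoint case you differentiate $\omega \wedge *_{g_t}\omega^\prime = \langle \omega,\omega^\prime\rangle_{g_t}\mu_{g_t}$ exactly as the paper does, and for the anti-self-adjoint case your primary argument (the constant family in Lemma \ref{lem:firstvariationh}) and your parenthetical (differentiating $*_g\omega = e^{th^\prime}(*_ge^{-th^\prime}\omega)$, which is the paper's version) both rest on the same observation that anti-self-adjoint variations leave $g$ unchanged. The only nitpick is in that parenthetical: it is anti-self-adjointness of $h^\prime$, not $\tr(h^\prime)=0$, that puts $e^{th^\prime}$ in $O(\R^7,g)$.
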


\begin{proof}
    Consider the family of inner products $g_t = (e^{th})^* g$. Using the previous lemmas, we can differentiate the identity $\omega^\prime \wedge *_{g_t} \omega = \langle \omega^\prime, \omega \rangle_{g_t} \mu_{g_t}$ at $t=0$ which yields:
    \begin{equation*}
        \omega^\prime \wedge h \cdot (*_g \omega) - \omega^\prime \wedge *_g (h \cdot \omega) = - 2 \langle \omega^\prime, h \cdot \omega \rangle_g \mu_g = - 2 \omega^\prime \wedge *_g (h \cdot \omega)
    \end{equation*}
    and hence $\omega^\prime \wedge h \cdot (*_g \omega) = - \omega^\prime \wedge *_g (h \cdot \omega)$ for any $\omega^\prime \in \Lambda^k \R^*_7$, which proves the first identity.

    For the second identity, we note that since $h^\prime$ is anti-self-adjoint for $g$, the linear isomorphisms $e^{th^\prime}$ preserve $g$, and thus
    \begin{equation*}
        *_g \omega  = e^{th^\prime} (*_g e^{-th^\prime} \omega) 
    \end{equation*}
    for any $t$, and differentiating at $t=0$ it follows that $ h^\prime \cdot (*_g \omega) - *_g(h^\prime \cdot \omega) = 0$.
\end{proof}

Another useful consequence to note is:

\begin{cor}     \label{cor:symyukawa}
    If $\varphi$ is a positive form on $\R^7$, then the cubic form
    \begin{equation*}
        (h_1,h_2,h_3) \in S^2 \R^*_7 \times S^2 \R^*_7 \times S^2 \R^*_7 \longmapsto \langle h_3 \cdot h_1 \cdot \varphi, h_2 \cdot \varphi \rangle_\varphi \in \R
    \end{equation*}
    is fully symmetric.
\end{cor}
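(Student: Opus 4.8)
The plan is to deduce full symmetry from invariance under two generating transpositions of $S_3$, using only the formal properties of the derivation $\delta_h$ recalled above. Throughout, identify $h_i\in S^2\R^*_7$ with the $g_\varphi$-self-adjoint endomorphism it represents, and set $F(h_1,h_2,h_3) := \langle \delta_{h_3}\delta_{h_1}\varphi, \delta_{h_2}\varphi\rangle_\varphi$, so that the claim is that $F$ is $S_3$-invariant.

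\emph{Transposition $(1\,2)$.} Since each $h_i$ is self-adjoint for $g_\varphi$, the operator $\delta_{h_3}$ is self-adjoint on $\Lambda(\R^*_7)$, hence
\[
F(h_1,h_2,h_3) = \langle\delta_{h_1}\varphi,\,\delta_{h_3}\delta_{h_2}\varphi\rangle_\varphi = \langle\delta_{h_3}\delta_{h_2}\varphi,\,\delta_{h_1}\varphi\rangle_\varphi = F(h_2,h_1,h_3),
\]
the middle step being symmetry of the inner product.

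\emph{Transposition $(2\,3)$.} Moving $\delta_{h_3}$ (resp.\ $\delta_{h_2}$) to the other side in the two terms, and then using $[\delta_{h_3},\delta_{h_2}] = -\delta_{[h_3,h_2]}$,
\[
F(h_1,h_2,h_3) - F(h_1,h_3,h_2) = \langle\delta_{h_1}\varphi,\,(\delta_{h_3}\delta_{h_2}-\delta_{h_2}\delta_{h_3})\varphi\rangle_\varphi = -\langle\delta_{h_1}\varphi,\,\delta_{[h_3,h_2]}\varphi\rangle_\varphi .
\]
Now $[h_3,h_2]$ is anti-self-adjoint for $g_\varphi$, being a commutator of self-adjoint endomorphisms, so it remains to see that $\langle\delta_{h}\varphi, \delta_A\varphi\rangle_\varphi = 0$ whenever $h$ is self-adjoint and $A$ is anti-self-adjoint. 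For this I would use the decomposition of $\Lambda^3\R^*_7$: under $k\mapsto k\cdot\varphi$ the subspace $\R\cdot\id \oplus S^2_0\R^*_7$ maps into $\Lambda^3_1\oplus\Lambda^3_{27}$ while $\Lambda^2_7$ maps into $\Lambda^3_7$ (and $\Lambda^2_{14}$, the Lie algebra of $G_2$, into $0$); since $\Lambda^3_7$ is $g_\varphi$-orthogonal to $\Lambda^3_1\oplus\Lambda^3_{27}$, the pairing vanishes. Alternatively, and without the decomposition: self-adjointness of $\delta_h$ and anti-self-adjointness of $\delta_A$ give $2\langle\delta_h\varphi,\delta_A\varphi\rangle_\varphi = \langle[\delta_h,\delta_A]\varphi,\varphi\rangle_\varphi = -\langle\delta_{[h,A]}\varphi,\varphi\rangle_\varphi$, and $\langle\delta_B\varphi,\varphi\rangle_\varphi$ depends only on $\tr B$ (it equals $3\tr B$, since the part of $\delta_B\varphi$ coming from the trace-free part of $B$ is orthogonal to $\Lambda^3_1 = \R\varphi$), which is $0$ for $B = [h,A]$.

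Since $(1\,2)$ and $(2\,3)$ generate $S_3$, combining the two steps shows that $F$ is invariant under every permutation of its arguments, which is the assertion. The only nonroutine point is the vanishing of $\langle\delta_h\varphi,\delta_A\varphi\rangle_\varphi$ for $h$ self-adjoint and $A$ anti-self-adjoint; I expect this to be the main thing to pin down, but both arguments above settle it in a line or two, so no serious obstacle is anticipated.
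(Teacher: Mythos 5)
Your proof is correct and follows essentially the same route as the paper: the transposition exchanging the two ``outer'' arguments is handled by self-adjointness of $\delta_{h_3}$, and the remaining transposition by the commutator identity $[\delta_h,\delta_{h'}]=-\delta_{[h,h']}$ together with the fact that $\delta_A\varphi\in\Lambda^3_7$ for $A$ anti-self-adjoint, which is orthogonal to $\Lambda^3_1\oplus\Lambda^3_{27}$. The only cosmetic difference is which transposition you pair with which mechanism (the paper uses $(1\,3)$ where you use $(2\,3)$), and your alternative trace argument for the key vanishing is a valid bonus but not needed.
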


\begin{proof}
    The identity
    \begin{equation*}
        \langle h_3 \cdot h_1 \cdot \varphi , h_2 \cdot \varphi \rangle_\varphi = \langle h_1 \cdot \varphi, h_3 \cdot h_2 \cdot \varphi \rangle_\varphi = \langle h_3 \cdot h_2 \cdot \varphi, h_1 \cdot \varphi \rangle_\varphi
    \end{equation*}
    holds because $\delta_{h_3}$ is self-adjoint for the inner product induced by $\varphi$ in $\Lambda(\R^*_7)$. Thus the cubic form is symmetric under permutation of $h_1$ and $h_2$. To prove that it is also symmetric under permutation of $h_1$ and $h_3$, note that since $[\delta_{h_3},\delta_{h_1}] = - \delta_{[h_3,h_1]}$ we have
    \begin{equation*}
         \langle h_3 \cdot h_1 \cdot \varphi, h_2 \cdot \varphi \rangle_\varphi - \langle h_1 \cdot h_3 \cdot \varphi, h_2 \cdot \varphi \rangle_\varphi = \langle [h_1,h_3] \cdot \varphi , h_2 \cdot  \varphi \rangle_\varphi = 0
    \end{equation*}
    where the last equality follows from the fact that $[h_1,h_3]$ is anti-self-adjoint, and thus $[h_1,h_3] \cdot \varphi \in \Lambda^3_7$ is orthogonal to $h_2 \cdot \varphi \in \Lambda^3_1 \oplus \Lambda^3_{27}$.
\end{proof}

Finally, we record the following well-known first variations formulas:

\begin{lem}            \label{lem:firstvariationphi}
    Let $\varphi$ be a positive form on $\R^7$, $\eta \in \Lambda^3 \R^*_7$ and let $h \in \End(\R^7)$ be the unique endomorphism orthogonal to $\Lambda^2_{14}$ such that $h \cdot \varphi = \eta$. Let $\varphi_t$ be a $1$-parameter family of positive forms in $\R^7$ such that $\varphi_0 = \varphi$ and $\left. \frac{d \varphi_t}{dt} \right|_{t=0} = \eta$, and let $\omega, \omega^\prime \in \Lambda^k \R^*_7$ for some $0 \leq k \leq 7$. Then we have the following first variation formulas:
    \begin{align*}
        \left. \frac{d}{dt} \right|_{t=0}  \langle\omega, \omega^\prime \rangle_{\varphi_t} & = - \langle h \cdot \omega, \omega^\prime \rangle_{\varphi} - \langle \omega, h \cdot \omega^\prime \rangle_{\varphi}, \\
        \left. \frac{d}{dt} \right|_{t=0} *_{\varphi_t} \omega & = h \cdot (*_\varphi \omega) - *_{\varphi} (h \cdot \omega) , \\
        \left. \frac{d}{dt} \right|_{t=0} \mu_{\varphi_t} & = \tr(h) \mu_\varphi, \\
        \left. \frac{d}{dt} \right|_{t=0} \Theta(\varphi_t) & = \frac{4}{3} *_\varphi \pi_1(\eta) + *_\varphi \pi_7(\eta) - *_\varphi \pi_{27}(\eta) \cdot
    \end{align*}
\end{lem}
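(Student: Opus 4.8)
The plan is to reduce all four identities to equivariance of the maps $\varphi \mapsto g_\varphi,\ \mu_\varphi,\ *_\varphi,\ \Theta(\varphi)$ under $GL_+(7)$, combined with Lemma \ref{lem:firstvariationh} and Corollary \ref{cor:hstar}. Since these maps are smooth on the open set $\Lambda^3_+ \R^*_7$, the first variation along any family $\varphi_t$ with $\varphi_0 = \varphi$ depends only on $\dot{\varphi}_0 = \eta$; hence it suffices to carry out the computation for the convenient family $\varphi_t = (e^{th})^* \varphi$, which does satisfy $\dot{\varphi}_0 = h \cdot \varphi = \eta$ by the defining property of $h$.

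For the first three formulas, equivariance of $\varphi \mapsto g_\varphi$ gives $g_{\varphi_t} = (e^{th})^* g_\varphi$, so that $\left.\frac{d}{dt}\right|_{t=0} g_{\varphi_t} = g_\varphi(h\cdot,\cdot) + g_\varphi(\cdot,h\cdot)$. As $\langle\cdot,\cdot\rangle_{\varphi_t}$, $*_{\varphi_t}$ and $\mu_{\varphi_t}$ are by definition the inner product, Hodge operator and volume form of $g_{\varphi_t}$, the first three identities are then exactly Lemma \ref{lem:firstvariationh} applied to the family of metrics $(g_{\varphi_t})$.

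For the fourth formula I would use that $\Theta$ is equivariant as well, so that $\left.\frac{d}{dt}\right|_{t=0}\Theta(\varphi_t) = h \cdot \Theta(\varphi)$ straight from the definition of the operation $h \cdot (\,\cdot\,)$ (equivalently, differentiate $\Theta(\varphi_t) = *_{\varphi_t}\varphi_t$ by the product rule and the already-proven second formula, the two copies of $*_\varphi(h\cdot\varphi)$ cancelling). The actual content is then to rewrite $h \cdot \Theta(\varphi)$ in terms of $\eta$. Write $h = \tfrac{1}{7}\tr(h)\,\id + h_0 + h_a$ with $h_0 \in S^2_0\R^*_7$ and $h_a \in \Lambda^2_7$; this is precisely the decomposition of $h$ into its component orthogonal to $\Lambda^2_{14}$. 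From $\id \cdot \zeta = (\deg\zeta)\,\zeta$ for homogeneous $\zeta$ (immediate from the definition of $h\cdot(\,\cdot\,)$) together with $h_0 \cdot \varphi \in \Lambda^3_{27}$ and $h_a \cdot \varphi \in \Lambda^3_7$, one reads off $\pi_1(\eta) = \tfrac{3}{7}\tr(h)\,\varphi$, $\pi_{27}(\eta) = h_0 \cdot \varphi$ and $\pi_7(\eta) = h_a \cdot \varphi$. On the other hand $\id \cdot \Theta(\varphi) = 4\,\Theta(\varphi) = 4\, *_\varphi \varphi$, while Corollary \ref{cor:hstar} applied to the trace-free self-adjoint $h_0$ and to the anti-self-adjoint $h_a$ gives $h_0 \cdot \Theta(\varphi) = h_0\cdot(*_\varphi\varphi) = -*_\varphi(h_0\cdot\varphi)$ and $h_a \cdot \Theta(\varphi) = *_\varphi(h_a \cdot \varphi)$. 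Summing the three contributions, $h \cdot \Theta(\varphi) = \tfrac{4}{7}\tr(h)\, *_\varphi\varphi + *_\varphi(h_a\cdot\varphi) - *_\varphi(h_0\cdot\varphi)$, which equals $\tfrac{4}{3} *_\varphi \pi_1(\eta) + *_\varphi \pi_7(\eta) - *_\varphi \pi_{27}(\eta)$ after using $\tfrac{4}{7}\tr(h)\,\varphi = \tfrac{4}{3}\pi_1(\eta)$; this is the claimed formula.

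The argument is essentially mechanical once the earlier lemmas are in hand. I expect the only point needing care to be the bookkeeping in the fourth formula: matching the irreducible pieces of $\eta = h\cdot\varphi$ with those of $h$, correctly tracking the distinct weights ($3$ on $\Lambda^3\R^*_7$ and $4$ on $\Lambda^4\R^*_7$) with which $\id$ acts, and making sure Corollary \ref{cor:hstar} is applied with the correct sign to the self-adjoint part $h_0$ as opposed to the anti-self-adjoint part $h_a$.
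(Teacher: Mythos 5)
Your proof is correct. The paper records this lemma without proof (as ``well-known''), so there is no argument to compare against, but your derivation is the natural one and uses exactly the tools the paper sets up: reducing to the family $(e^{th})^*\varphi$ via equivariance and smoothness, invoking Lemma \ref{lem:firstvariationh} for the first three identities, and for the fourth correctly matching the pieces $\pi_1(\eta) = \tfrac{3}{7}\tr(h)\varphi$, $\pi_7(\eta) = h_a\cdot\varphi$, $\pi_{27}(\eta) = h_0\cdot\varphi$ with the weight-$4$ action of $\id$ on $\Theta(\varphi)$ and the two sign cases of Corollary \ref{cor:hstar}.
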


    \subsection{$G_2$-manifolds}        \label{subsection:g2manifolds}

Let $M^7$ be an oriented $7$-dimensional manifold. A \emph{$G_2$-structure} corresponds to the data of a $3$-form $\varphi$ such that $\varphi_p \in T_pM$ is positive for every $p \in M$. The properties of positive forms on $\R^7$ carry over to $G_2$-structures on manifolds; in particular a $G_2$-structure $\varphi \in \Omega^3(M)$ determines a Riemannian metric $g_\varphi$, a volume form $\mu_\varphi$ and a $4$-form $\Theta(\varphi) = *_\varphi \varphi$. It is called \emph{torsion-free} if $\varphi$ is parallel for the Levi-Civita connection of $g_\varphi$, or equivalently if $\varphi$ is closed and co-closed, that is, $d\varphi = 0 = d(\Theta(\varphi))$ \cite{fernandez1982riemannian}. If this is satisfied, then the holonomy group of $g_\varphi$ is isomorphic to a subgroup of $G_2$, and in particular the metric $g_\varphi$ is \emph{Ricci-flat} \cite{bonan1966sur}. The existence of metrics with full holonomy $G_2$ was first proved by Bryant \cite{bryant1987metrics} for local metrics, Bryant--Salamon for complete ones \cite{bryant1989construction}, and Joyce \cite{joyce1996compacti} on compact manifolds. A manifold $M$ endowed with a torsion-free $G_2$-structures $\varphi$ is called a \emph{$G_2$-manifold}. In this part we give some background on the geometry of such manifolds.

If $M$ is equipped with a $G_2$-structure $\varphi$ (not necessarily torsion-free), there is an associated splitting of the exterior bundle $\Lambda(T^*M)$ and identifications
\begin{align*}
    T^*M & \simeq TM, \\
    \Lambda^2 T^*M & = \Lambda^2_7 T^*M \oplus \Lambda^2_{14} T^*M, ~~~~~~~~~~~~~~~~~~ \Lambda^2_7 T^*M \simeq T^*M, \\
    \Lambda^3 T^*M & = \Lambda^3_1 T^*M \oplus \Lambda^3_7 T^*M \oplus \Lambda^3_{27} T^*M, ~~~~ \Lambda^3_1 T^*M \simeq \underline \R, ~ \Lambda^3_7 T^*M \simeq T^*M, \\
    \Lambda^k T^*M & \simeq \Lambda^{7-k} T^*M, ~~~~~~~~~~~~~~~~~~~~~~~~~~~~~~ k = 0, \ldots , 7
\end{align*}
where $\underline \R$ is the trivial real line bundle $M \times \R$. There is a corresponding splitting of the algebra of differential forms on $M$, and we will denote $\Omega^k(M) = \oplus_m \Omega^k_m(M)$ where $\Omega^k_m(M) = C^\infty(\Lambda^k_m T^*M)$, and $\pi_m$ the projection of $\Omega^k(M)$ onto $\Omega^k_m(M)$. In particular, any $2$-form $\omega$ on $M$ can be written uniquely as 
\begin{equation*}
    \omega = \xi \lrcorner \varphi + \chi, ~~~ \xi \in C^\infty(TM), ~ \chi \in \Omega^2_{14}(M),
\end{equation*}
and any $3$-form $\eta$ can be written uniquely as
\begin{equation*}
    \eta = f \varphi + \xi \lrcorner \Theta(\varphi) + \nu, ~~~ f \in C^\infty(M), ~ \xi \in C^\infty(TM), ~ \nu \in \Omega^3_{27}(M) .
\end{equation*}
Another useful way to describe a $3$-form is to decompose $\End(TM)$ as
\begin{equation*}
    \End(TM) \simeq \Lambda^2 T^*M \oplus S^2 T^*M = \Lambda^2_{14} T^*M \oplus \Lambda^2_7 T^*M \oplus \R g_\varphi \oplus S^2_0 T^*M
\end{equation*}
where $S^2_0 T^*M \simeq \Lambda^3_{27} T^*M$. Then for any $3$-form $\eta \in \Omega^3(M)$, there exists a unique section $h \in C^\infty(\End(TM))$ orthogonal to $\Omega^2_{14}(M)$ such that $\eta = h \cdot \varphi$. In particular, $\pi_7(\eta) = 0$ if and only if $h$ is a self-adjoint endomorphism for the metric $g_\varphi$.

Assume now that $M$ is a compact, connected, oriented manifold endowed with a torsion-free $G_2$-structure $\varphi$. Due to a Weitzenb\"ock formula, the Laplacian operator associated with $g_\varphi$ leaves invariant each component of the splitting $\Omega^k(M) = \oplus \Omega^k_m(M)$.  Therefore, Hodge theory yields a decomposition of the de Rham cohomology groups $H^k(M) \simeq \oplus H^k_m(M)$, and moreover isomorphic representations lead to isomorphic components in cohomology. In particular:
\begin{equation*}
H^1(M) \simeq H^2_7(M) \simeq H^3_7(M) ~~ \text{and} ~~ H^3_1(M) \simeq H^0(M) \simeq \R.
\end{equation*}
We will denote by $\mathscr{H}^k(M,\varphi)$ the space of $k$-forms harmonic with respect to $g_\varphi$, and $\mathscr{H}^k_m(M,\varphi)$ the intersection of $\mathscr{H}^k(M,\varphi)$ and $\Omega^k_m(M)$. Since the metric $g_\varphi$ is Ricci-flat, $\mathscr{H}^1(M)$ is exactly the space of parallel $1$-forms on $M$, and is dual to the space of Killing fields. Moreover, the Cheeger--Gromoll splitting theorem \cite{cheeger1971splitting} implies that $g_\varphi$ has full holonomy $G_2$ if and only if the fundamental group $\pi_1(M)$ is finite \cite[Prop. 10.2.2]{joyce2000compact}. 

A weaker condition is requiring $b^1(M) = 0$. Geometrically, this prevents the existence of parallel $1$-forms and is equivalent to saying that the holonomy group of $g_\varphi$ acts on the tangent space without fixing any nonzero vector. When $b^1(M) = 0$ and $\pi_1(M)$ is infinite, $M$ has a finite cover isometric to $T^k \times N^{7-k}$ where $T^k$ is a flat torus of dimension $k$ and either one of the following possibilities holds: $k = 7$ and $N$ is reduced to a point, or $k = 3$ and $N$ is a hyperk\"ahler K3 surface, or $k = 1$ and $N$ is a Calabi--Yau threefold. In particular, the restricted holonomy group of $M$ is either one of $\{1\}$, $SU(2)$ or $SU(3)$. If we are mainly interested in metrics with full holonomy $G_2$, our results will be valid in general for $G_2$-manifolds with $b^1(M) = 0$.

    \subsection{Moduli spaces}      \label{subsection:modulispaces}

Let $M$ be a compact oriented $7$-manifold which admits torsion-free $G_2$-structures. We denote by $\D$ the group of diffeomorphisms of $M$ acting trivially on $H^3(M)$. In particular, it contains the group of diffeomorphisms isotopic to the identity, but it could be larger. The group $\D$ acts by pull-back on the space $\Omega^3_+(M)$ of $G_2$-structures on $M$, leaving invariant the subset of torsion-free $G_2$-structures. The moduli space $\M$ of torsion-free $G_2$-structures is defined as the quotient of the set of torsion-free $G_2$-structures by this action. It has a natural topology coming from the $C^\infty$-topology of $\Omega^3_+(M)$, and it was proven by Joyce \cite[Th. C]{joyce1996compacti} that it admits a compatible manifold structure of dimension $b^3(M)$, and the tangent space $T_{\varphi \D} \M$ can be identified with the space $\mathscr H^3(M,\varphi)$ of $3$-forms harmonic with respect to $g_\varphi$. Moreover, the map $\M \rightarrow H^3(M)$ sending $\varphi \D \in \mathscr M$ to the cohomology class $[\varphi ] \in H^3(M)$ is globally well-defined, and induces local diffeomorphisms between open subsets of $\M$ and open subsets of $H^3(M)$. This endows $\M$ with a natural atlas of charts with affine transition functions, and therefore $\M$ has the structure of an \emph{affine manifold}. If $(u_0,\ldots,u_n)$ is a basis of $H^3(M)$, where $n = b^3(M)-1$, we will denote by $(x^0,\ldots,x^n)$ the associated local coordinates on $\M$ and call them affine coordinates.

\begin{rem}
    In fact our definition differs slightly from the convention adopted in \cite{joyce1996compacti} (or in Joyce's monograph \cite[\S10.4]{joyce2000compact}) where one takes the quotient by the group $\D_0$ of diffeomorphisms isotopic to the identity (the resulting space may be called the ``Teichm\"uller space'' $\mathscr{T}$ of torsion-free $G_2$-structures). With our definition, we are taking a further quotient by the discrete group $\Gamma = \D / \D_0$, but since the Teichm\"uller space is locally diffeomorphic to $H^3(M)$ and $\D$ acts trivially on this space, it follows that the quotient $\M = \mathscr{T}/\Gamma$ is nonsingular and locally diffeomorphic to $H^3(M)$. Hence $\M$ is more of a ``marked moduli space'' (we fix an identification of $H^3(M)$ with $\R^{b_3(M)}$) of torsion-free $G_2$-structures on $M$, but we will just call it the moduli space for simplicity. This does not affect the results of the present article, since with either convention the moduli space is smooth and locally diffeomorphic to $H^3(M)$. 

    By means of justifying our choice of convention, let us point out the following issue. Let us look at the moduli space of torsion-free $G_2$-structures on the torus $T^7 = \R^7 / \Z^7$. It is natural to guess that it can be identified with $\Lambda^3_+ \R^*_7$. It is not difficult to see that this is true, \emph{with our convention}. The issue is that the structure of the mapping class group of $T^7$ is rather complicated, and in particular there are elements of the mapping class group which act trivially on the cohomology $H^\bullet(T^7)$ (this is true for $T^n$ for any $n \geq 5$ \cite{hatcher1978concordance,hsiang1976parametrized}). In particular, $\Gamma = \D / \D_0$ is a nontrivial group, and it acts freely transitively on the fibres of the covering map $\mathscr{T} \rightarrow \M$.
\end{rem}

The moduli space $\M$ carries a natural Riemannian metric $\G$, which can be described as follows. If $\varphi$ is a torsion-free $G_2$-structure on $M$, the tangent space $T_{\varphi \D} \M$ can be identified with the space of $3$-forms which are harmonic with respect to the metric $g_\varphi$, denoted by $\mathscr{H}^3(M,\varphi)$. Thus we can define:
\begin{equation}        \label{eq:defg}
    \G(\eta, \eta^\prime) = \frac{1}{\Vol(\varphi)} \int \langle \eta, \eta^\prime \rangle_\varphi \mu_\varphi, ~~~ \forall \eta, \eta^\prime \in \mathscr H^3(M,\varphi) \simeq T_{\varphi\D} \M,
\end{equation}
where $\Vol(\varphi)$ is the volume of $(M,g_\varphi)$, that is:
\begin{equation}
    \Vol(\varphi) = \int \mu_\varphi = \frac{1}{7} \int \varphi \wedge \Theta(\varphi) .
\end{equation}
It is perhaps worth commenting on the volume normalisation in this definition. If we denote by $\M_1 \subset \M$ the moduli space of torsion-free $G_2$-structures with unit volume, then the metric $\G$ restricts to the usual $L^2$-metric on $\M_1$, denoted by $\G_1$. Moreover, there is a diffeomorphism $\R \times \M_1 \rightarrow \M$ mapping $(t,\varphi \D)$ to $e^t \varphi \D$. It is easy to check that under this diffeomorphism $\G = 7dt^2 + \G_1$, and in particular $(\M,\G)$ splits a line and is isometric to $\R \times (\M_1,\G_1)$. Hence there is no essential difference between studying the Riemannian properties of $(\M,\G)$ and those of $(\M_1,\G_1)$. This would not be the case without the volume normalisation.

Another motivation for this choice of normalisation is that, when the first Betti number of $M$ vanishes, the metric $\G$ is Hessian. Indeed, since the volume functional is invariant under diffeomorphisms, it descends to a smooth function on the moduli space, and we can define $\mathscr F : \M \rightarrow \R$ by:
\begin{equation}
    \F(\varphi \D) = - 3 \log \Vol(\varphi) .
\end{equation}
This defines a smooth function on the moduli space, which we refer to as the \emph{potential}. If $(x^0, \ldots x^n)$ are local affine coordinates, we denote by $\F_a = \frac{\partial \F}{\partial x^a}$, $\F_{ab} = \frac{\partial^2 \F}{\partial x^a \partial x^b}$, and so on the derivatives of $\F$. If $\varphi$ is a torsion-free $G_2$-structure on $M$, we denote by $\eta_a \in \mathscr{H}^3(M,\varphi)$ the harmonic representative of the cohomology class $\frac{\partial}{\partial x^a} \in H^3(M)$. The first and second derivatives of $\F$ admit the following expressions \cite{grigorian2009local,karigiannis2009hodge}:

\begin{prop}     \label{prop:fafab}
    Let $x = (x^0, \ldots , x^n)$ be affine coordinates on $\M$ and let $\varphi$ be a torsion-free $G_2$-structure. Then the first and second derivatives of $\F$ at $\varphi \D \in \M$ read:
    \begin{equation*}
        \F_a = - \frac{1}{\Vol(\varphi)} \int \eta_a \wedge \Theta(\varphi), ~~ \text{and} ~~ \F_{ab} = \frac{1}{\Vol(\varphi)} \int \langle \eta_a, \pi_{1 \oplus 27} (\eta_b) - \pi_7(\eta_b)\rangle_\varphi \mu_\varphi .
    \end{equation*}
\end{prop}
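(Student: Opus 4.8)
The plan is to differentiate the potential $\F = -3\log\Vol$ directly, using the first-variation formulas of Lemma~\ref{lem:firstvariationphi} together with Joyce's identification $T_{\varphi\D}\M \simeq \mathscr{H}^3(M,\varphi)$. Along the affine coordinate direction $x^a$ one can pick a family $\varphi_t$ of torsion-free $G_2$-structures with $\varphi_0 = \varphi$ and $\left.\tfrac{d}{dt}\right|_{t=0}\varphi_t = \eta_a$: since $d\varphi_t = 0$, the classes $[\varphi_t]$ vary affinely, so $\left.\tfrac{d}{dt}\right|_{t=0}\varphi_t$ is closed with class $\del/\del x^a$, and $\eta_a$ is its harmonic representative; that such a family exists is Joyce's slice theorem.

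\emph{First derivative.} Writing the variation as $\eta = h\cdot\varphi$ with $h$ orthogonal to $\Lambda^2_{14}$, Lemma~\ref{lem:firstvariationphi} gives $\left.\tfrac{d}{dt}\right|_{t=0}\mu_{\varphi_t} = \tr(h)\,\mu_\varphi$. Only the $\Lambda^3_1$-component of $\eta$ contributes to $\tr(h)$: the endomorphisms producing the $\Lambda^3_7$- and $\Lambda^3_{27}$-parts are respectively anti-self-adjoint and trace-free self-adjoint, while $\pi_1(\eta) = \tfrac{1}{7}\langle\eta,\varphi\rangle_\varphi\,\varphi$ comes from $h = \tfrac{1}{21}\langle\eta,\varphi\rangle_\varphi\,\Id$ (using $\Id\cdot\varphi = 3\varphi$), so $\tr(h) = \tfrac{1}{3}\langle\eta,\varphi\rangle_\varphi$ and $\left.\tfrac{d}{dt}\right|_{t=0}\mu_{\varphi_t} = \tfrac{1}{3}\,\eta\wedge\Theta(\varphi)$. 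Integrating and using $d\Theta(\varphi)=0$ (so that only $[\eta]=\del/\del x^a$ matters) gives $\left.\tfrac{d}{dt}\right|_{t=0}\Vol(\varphi_t) = \tfrac{1}{3}\int\eta_a\wedge\Theta(\varphi)$, and the chain rule yields $\F_a = -\Vol(\varphi)^{-1}\int\eta_a\wedge\Theta(\varphi)$.

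\emph{Second derivative.} I would differentiate $\F_a = -\Vol(\varphi)^{-1}\int\eta_a\wedge\Theta(\varphi)$ along $x^b$, using a family with $\left.\tfrac{d}{dt}\right|_{t=0}\varphi_t = \eta_b$, while bearing in mind that the harmonic representative $\eta_a = \eta_a(g_{\varphi_t})$ itself varies. The product rule produces three terms. The term involving $\tfrac{d}{dt}\eta_a$ vanishes: the class $[\eta_a(t)] = \del/\del x^a$ is constant, so $\tfrac{d}{dt}\eta_a$ is exact and pairs trivially against the closed form $\Theta(\varphi)$. The term involving $\tfrac{d}{dt}\Vol$ equals $\tfrac{1}{3}\F_a\F_b$ after substituting the first-derivative formula twice. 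The term involving $\left.\tfrac{d}{dt}\right|_{t=0}\Theta(\varphi_t) = \tfrac{4}{3}*_\varphi\pi_1(\eta_b)+*_\varphi\pi_7(\eta_b)-*_\varphi\pi_{27}(\eta_b)$ (last line of Lemma~\ref{lem:firstvariationphi}), via $\int\alpha\wedge*_\varphi\beta = \int\langle\alpha,\beta\rangle_\varphi\mu_\varphi$ and the orthogonality of the type components, equals $-\Vol(\varphi)^{-1}\int\bigl(\tfrac{4}{3}\langle\pi_1\eta_a,\pi_1\eta_b\rangle_\varphi + \langle\pi_7\eta_a,\pi_7\eta_b\rangle_\varphi - \langle\pi_{27}\eta_a,\pi_{27}\eta_b\rangle_\varphi\bigr)\mu_\varphi$.

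\emph{Recombination.} The crucial observation is that $\pi_1(\eta_a)$ is harmonic (the Laplacian preserves the type splitting) and $\mathscr{H}^3_1(M,\varphi)$ is one-dimensional since $H^3_1(M)\simeq H^0(M)\simeq\R$; as $\varphi$ is itself harmonic of type $1$, this forces $\pi_1(\eta_a)=c_a\varphi$ with $c_a$ constant, and comparing with the first-derivative formula gives $c_a=-\tfrac{1}{7}\F_a$, hence $\Vol(\varphi)^{-1}\int\langle\pi_1\eta_a,\pi_1\eta_b\rangle_\varphi\mu_\varphi=\tfrac{1}{7}\F_a\F_b$. The $\Lambda^3_1$-contributions then telescope: $\tfrac{1}{3}\F_a\F_b - \tfrac{4}{3}\cdot\tfrac{1}{7}\F_a\F_b = \tfrac{1}{7}\F_a\F_b = \Vol(\varphi)^{-1}\int\langle\pi_1\eta_a,\pi_1\eta_b\rangle_\varphi\mu_\varphi$, leaving $\F_{ab} = \Vol(\varphi)^{-1}\int\bigl(\langle\pi_1\eta_a,\pi_1\eta_b\rangle_\varphi - \langle\pi_7\eta_a,\pi_7\eta_b\rangle_\varphi + \langle\pi_{27}\eta_a,\pi_{27}\eta_b\rangle_\varphi\bigr)\mu_\varphi$, which is the asserted formula once the integrand is rewritten as $\langle\eta_a,\pi_{1\oplus27}(\eta_b)-\pi_7(\eta_b)\rangle_\varphi$ using orthogonality of the components. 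The step I expect to be the main obstacle is precisely this recombination: keeping the signs and the factors $\tfrac{1}{3}$, $\tfrac{4}{3}$, $\tfrac{1}{7}$ straight and recognising the cancellation of the $\F_a\F_b$ cross-terms down to a single $\Lambda^3_1$-pairing, together with the (standard but essential) fact that the variation may be taken harmonic.
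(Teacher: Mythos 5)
Your computation is correct: the arithmetic of the $\Lambda^3_1$-contributions ($\tfrac13 - \tfrac43\cdot\tfrac17 = \tfrac17$), the identification $\pi_1(\eta_a) = -\tfrac17\F_a\,\varphi$ via one-dimensionality of $\mathscr{H}^3_1$, and the vanishing of the exact term all check out. The paper itself states this proposition without proof, citing Grigorian--Yau and Karigiannis--Leung, but your argument is the standard direct one and matches the method the paper uses for the third derivative in Proposition~\ref{prop:fabc} (differentiate along an adapted section and apply Lemma~\ref{lem:firstvariationphi}), so there is nothing to flag.
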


If $b^1(M) = 0$, the harmonic $3$-forms with respect to a torsion-free $G_2$-structure have no $\Omega^3_7$-component. In this case, the second derivative of $\F$ takes the simpler form:
\begin{equation}        \label{eq:fabetaab}
    \F_{ab} = \frac{1}{\Vol(\varphi)} \int \langle \eta_a, \eta_b \rangle_\varphi \mu_\varphi .
\end{equation}
 Thus the Hessian $\F_{ab}$ is nondegenerate and positive, and in affine coordinates on $\M$
 \begin{equation*}
     \G = \G_{ab} dx^a dx^b = \F_{ab} dx^a dx^b
 \end{equation*}
where we write $dx^k dx^l$ as a short-hand for the tensor product $dx^k \otimes dx^l \in T^* \M \otimes T^*\M$, and we will use similar notations for tensor products of higher degree. Thus the metric $\G$ is the Hessian of the potential $\F$ for the flat connection induced by the map $\pi : \M \rightarrow H^3(M)$. In general, if the first Betti number of $M$ is nonzero, the Hessian of $\F$ is still nondegenerate and defines a metric of signature $(b^3(M)-b^1(M), b^1(M))$ on $\mathscr M$. Even in the case $b^1(M) = 0$, one could take the volume functional $\Vol$ instead of $\F$ as a potential, which has nondegenerate Hessian and defines a metric on $\M$ with Lorentzian signature \cite{hitchin2000geometry,karigiannis2009hodge}. In the present work we prefer to use $\F$ as a potential, which is the convention usually adopted by physicists. In fact, both conventions agree when restricted to the moduli space $\M_1$ of torsion-free $G_2$-structures with unit volume,  but we prefer to use $\F$ since it is more convenient to work with a Riemannian metric instead of a Lorentzian one. Moreover, since $(\M,\G)$ is isometric to $\R \times (\M_1,\G_1)$ all geometric invariants of interest can be computed in $\M$, which has a natural affine structure, and directly restricted to $\M_1$, whereas it would be more difficult to do computations directly in $\M_1$ for lack of natural coordinates.

\begin{rem}     \label{rem:idgf}
    A couple of useful identities to note in local affine coordinates are
    \begin{equation*}
        x^k \G_{ak} = x^k \F_{ak} = - \F_a, ~~~ \text{and} ~~~ x^k \F_k = -7 .
    \end{equation*}
    They just follow from the fact that $x^k$ are by definition the coordinates of the cohomology class $[\varphi] \in H^3(M)$ and 
    \begin{align*}
        \G_\varphi(\varphi,\eta_a) & = \frac{1}{\Vol(\varphi)} \int \langle \varphi, \eta_a \rangle_\varphi \mu_\varphi = - \F_a, \\
        d_\varphi \F(\varphi) & = - \frac{1}{\Vol(\varphi)} \int \varphi \wedge \Theta(\varphi) = - 7 .
    \end{align*}
\end{rem}

For the purpose of computing higher derivatives of the potential, it will be convenient to adopt the following definition:

\begin{Def}
    Let $\mathscr U \subseteq \M$ be an open subset of the moduli space. A \emph{local section} of the moduli space defined on $\mathscr U$ is a smooth map $\underline \varphi : \mathscr U \times M \rightarrow \Lambda^3T^*M$, such that for any $u \in \mathscr U$ the restriction $\varphi_u = {\underline \varphi}_{\{ u \} \times M}$ is a torsion-free $G_2$-structure on $M$ and $u = \varphi_u \D$ in $\M$. A section $\underline \varphi$ is said to be \emph{adapted} at $u_0 \in \mathscr U$ if the tangent map $T_{u_0} \mathscr U \rightarrow \Omega^3(M)$ of the induced map $\mathscr U \rightarrow \Omega^3(M)$ takes values in the space $\mathscr{H}^3(M,\varphi_{u_0})$ of harmonic $3$-forms for the metric induced by $\varphi_{u_0}$. 
\end{Def}

In affine coordinates $x = (x^0, \ldots , x^n)$, where $n+1 = b^3(M)$, a local section $\underline \varphi = (\varphi_x)_x$ of the moduli space is adapted at a point $u_0$ with coordinates $x_0$ if and only if for any $0 \leq a \leq n$, the $3$-form $\left. \frac{\partial \varphi_x}{\partial x^a} \right|_{x=x_0}$ is harmonic for the metric induced by $\varphi_{x_0}$. By the proof of \cite[Th. C]{joyce1996compacti}, there exist adapted sections through any point of the moduli space. The interest of working with sections that are adapted at a point is the following lemma, which will simplify many computations:

\begin{lem}     \label{lem:diffxnot}
    Let $\mathscr U$ be an open subset of $\M$, $x = (x^0, \ldots , x^n)$ be affine coordinates on $\mathscr U$ and $x_0 \in \mathscr U$. Let $\underline \varphi = (\varphi_x)_x$ be a local section of the moduli space adapted at $x_0$, and let $f : \mathscr U \times M \rightarrow \R$ be a smooth function. Then at $x = x_0$:
    \begin{equation*}
        \left. \frac{\partial}{\partial x^a} \right|_{x=x_0} \left( \frac{1}{\Vol(\varphi_x)} \int f_x \mu_{\varphi_x} \right) = \frac{1}{\Vol(\varphi_{x_0})} \int \left. \frac{\partial f_x}{\partial x^a} \right|_{x=x_0} \mu_{\varphi_{x_0}}, ~~~ \forall a = 0, \ldots , n .
    \end{equation*}
\end{lem}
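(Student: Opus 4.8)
The plan is to reduce the statement to the observation that, when $\underline\varphi$ is adapted at $x_0$, the \emph{volume-normalised volume form} $\mu_{\varphi_x}/\Vol(\varphi_x)$ is stationary at $x_0$ to first order, pointwise on $M$. Granting this, one writes
\[
\left.\frac{\del}{\del x^a}\right|_{x_0}\!\left(\frac{1}{\Vol(\varphi_x)}\int f_x\,\mu_{\varphi_x}\right)
=\int \left.\frac{\del f_x}{\del x^a}\right|_{x_0}\frac{\mu_{\varphi_{x_0}}}{\Vol(\varphi_{x_0})}
+\int f_{x_0}\left.\frac{\del}{\del x^a}\right|_{x_0}\!\left(\frac{\mu_{\varphi_x}}{\Vol(\varphi_x)}\right),
\]
where differentiation under the integral sign is justified by the smoothness of $\underline\varphi$ and $f$ and the compactness of $M$; the second term vanishes, which gives exactly the asserted identity (the first term being the claimed right-hand side, after pulling $1/\Vol(\varphi_{x_0})$ out of the integral).

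So the content is the vanishing of $\left.\frac{\del}{\del x^a}\right|_{x_0}\!\big(\mu_{\varphi_x}/\Vol(\varphi_x)\big)$. First I would set $\eta_a:=\left.\frac{\del\varphi_x}{\del x^a}\right|_{x_0}$, which is harmonic for $g_{\varphi_{x_0}}$ by adaptedness, and let $h_a\in C^\infty(\End(TM))$ be the unique section orthogonal to $\Omega^2_{14}(M)$ with $h_a\cdot\varphi_{x_0}=\eta_a$, so that $\left.\frac{\del}{\del x^a}\right|_{x_0}\mu_{\varphi_x}=\tr(h_a)\,\mu_{\varphi_{x_0}}$ by Lemma~\ref{lem:firstvariationphi}. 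The key point is that $\tr(h_a)$ is a \emph{constant} function on $M$. Indeed, the trace form on $\End(TM)$ annihilates the summands $\Lambda^2_7 T^*M$ and $S^2_0 T^*M$, so $\tr(h_a)$ depends only on the $\R g_{\varphi_{x_0}}$-component of $h_a$, equivalently on $\pi_1(\eta_a)$. Now $\pi_1(\eta_a)$ is itself harmonic, since $\Delta_{\varphi_{x_0}}$ preserves the type decomposition of $\Omega^3(M)$ and $\eta_a$ is harmonic; and $\varphi_{x_0}$, being closed and coclosed, is a nonzero harmonic form of type $\Omega^3_1$, so the isomorphism $\mathscr{H}^3_1(M,\varphi_{x_0})\cong H^3_1(M)\cong\R$ forces $\mathscr{H}^3_1(M,\varphi_{x_0})=\R\varphi_{x_0}$. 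Hence $\pi_1(\eta_a)=c_a\varphi_{x_0}$ for some $c_a\in\R$, and reading off the scalar part of $h_a$ gives $\tr(h_a)=\tfrac{7}{3}c_a$, which is constant.

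With this in hand, integrating $\left.\frac{\del}{\del x^a}\right|_{x_0}\mu_{\varphi_x}=\tr(h_a)\,\mu_{\varphi_{x_0}}$ over $M$ yields $\left.\frac{\del}{\del x^a}\right|_{x_0}\Vol(\varphi_x)=\tr(h_a)\,\Vol(\varphi_{x_0})$, and therefore
\[
\left.\frac{\del}{\del x^a}\right|_{x_0}\frac{\mu_{\varphi_x}}{\Vol(\varphi_x)}
=\frac{\tr(h_a)\,\mu_{\varphi_{x_0}}}{\Vol(\varphi_{x_0})}
-\frac{\tr(h_a)\,\Vol(\varphi_{x_0})}{\Vol(\varphi_{x_0})^{2}}\,\mu_{\varphi_{x_0}}=0.
\]
Substituting this into the Leibniz expansion above completes the argument.

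The only genuine input is the constancy of $\tr(h_a)$, i.e.\ the fact that the $\Omega^3_1$-part of a harmonic $3$-form is a constant multiple of $\varphi$; this is precisely where the torsion-free hypothesis enters, through $\varphi_{x_0}$ being parallel (equivalently, through $H^3_1(M)\cong H^0(M)$), and it is the step I would be most careful with. Everything else is bookkeeping with the first-variation formula of Lemma~\ref{lem:firstvariationphi} together with differentiation under the integral sign, where I do not expect any real difficulty.
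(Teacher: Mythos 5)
Your proof is correct and follows essentially the same route as the paper's: both reduce the statement, via the Leibniz rule and the first-variation formula $\del_a \mu_{\varphi_x} = \tr(h_a)\,\mu_{\varphi_{x_0}}$, to the fact that the $\Omega^3_1$-part of a $g_{\varphi_{x_0}}$-harmonic $3$-form is a constant multiple of $\varphi_{x_0}$. The only difference is organisational: the paper changes basis so as to treat a pure $\Lambda^3_1$-direction (where the two correction terms cancel) and pure $\Lambda^3_{27}$-directions (where each vanishes) separately, whereas you observe once and for all that $\tr(h_a)$ is constant, hence $\mu_{\varphi_x}/\Vol(\varphi_x)$ is stationary at $x_0$, which handles all directions uniformly.
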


\begin{proof}
    After a linear change of coordinates, we may choose a basis $\eta_0,\ldots,\eta_n$ of $\mathscr H^3(M,\varphi_{x_0})$ such that $\eta_0 \in \mathscr H^3_1(M,\varphi_{x_0})$ and $\eta_a \in \mathscr H^3_{27}(M,\varphi_{x_0})$ for $a = 1,\ldots,n$ and assume that $(x^0,\ldots,x^n)$ are the associated local coordinates (that is, $\frac{\partial}{\partial x^a} = [\eta_a] \in H^3(M)$). In these coordinates we have:
    \begin{multline}        \label{eq:deradapted}
        \left. \frac{\partial}{\partial x^a} \right|_{x=x_0} \left( \frac{1}{\Vol(\varphi_x)} \int f_x \mu_{\varphi_x} \right) = \frac{1}{\Vol(\varphi_{x_0})} \int \left. \frac{\partial f_x}{\partial x^a} \right|_{x=x_0} \mu_{\varphi_{x_0}} \\
            + \frac{1}{\Vol(\varphi_{x_0})} \int f_{x_0} \left.\frac{\partial \mu_{\varphi_{x_0}}}{\partial x^a} \right|_{x=x_0} + \left. \frac{\partial}{\partial x^a} \left( \frac{1}{\Vol(\varphi_x)} \right) \right|_{x=x_0} \int f_{x_0} \mu_{\varphi_{x_0}} .
    \end{multline}
    Since the section is adapted at the point $x_0$, $\frac{\partial \varphi_x}{\partial x^0}$ is a harmonic section of $\Omega^3_1(M)$ and $\frac{\partial \varphi_x}{\partial x^a}$ are harmonic sections of $\Omega^3_{27}(M)$ for $a = 1, \ldots , n$ at $x=x_0$. Hence, if $a \geq 1$ then $\frac{\partial \mu_{\varphi_x}}{\partial x^a} = 0$ at $x = x_0$, which also implies $\frac{\partial \Vol(\varphi_x)}{\partial x^a} = 0$. Therefore, both terms in the second line of \eqref{eq:deradapted} vanish. For the derivative along the coordinate $x^0$, there exists $\lambda \neq 0$ such that $\frac{\partial \varphi_x}{\partial x^0} = \lambda \varphi_{x_0}$ at $x = x_0$, and by Lemma \ref{lem:firstvariationphi} this implies:
    \begin{equation*}
        \frac{\partial \mu_{\varphi_x}}{\partial x^0} = \frac{7 \lambda}{3} \mu_{\varphi_x} , ~~~ \frac{\partial}{\partial x^0} \left( \frac{1}{\Vol(\varphi_x)} \right) = - \frac{7 \lambda}{3} \frac{1}{\Vol(\varphi_x)}
    \end{equation*}
    at $x = x_0$. Therefore the lemma also holds for $a = 0$ since the two terms in the second line of \eqref{eq:deradapted} cancel each other. 
\end{proof}



   \section{Higher derivatives of the potential}       \label{section:derivatives}

In this section, we present a new derivation of the derivatives of the potential $\F$ up to order $4$, and derive a few consequences for the geometry of the moduli space. First, we study in \S\ref{subsection:harmonicdeform} the infinitesimal deformations of harmonic forms along a family of Riemannian metrics. The derivations of the third and fourth derivatives of the potential are carried out in \S\ref{subsection:34der}. In \S\ref{subsection:curvatures} we relate them to the curvatures of the moduli spaces. In \S\ref{subsection:t3k3} we push further our computations for the case of $(T^3 \times K3)/\Gamma$. Another geometric interpretation of our formulas will be given the next section.

    \subsection{Deformations of harmonic forms along a family of metrics}       \label{subsection:harmonicdeform}

In this part, we let $(M^7,g)$ be an oriented compact Riemannian $7$-manifold and $h \in \End(TM)$ be a \emph{trace-free} endomorphism, self-adjoint for the metric $g$. Moreover, let $\{g_t\}_{t \in (-\epsilon,\epsilon)}$ be a smooth family of metrics such that $g_0 = g$ and $\left. \frac{\partial g_t}{\partial t} \right|_{t = 0} = 2 g(h, \cdot)$. For all $|t| < \epsilon$, we denote by $h_t$ the unique $g_t$-self-adjoint endomorphism of $TM$ such that $\frac{\partial g_t}{\partial t} = 2 g_t(h_t \cdot, \cdot)$. In particular, $h_0 = h$, but we do not require $h_t$ to be trace-free with respect to $g_t$ for $t \neq 0$. We also denote by $*$ the Hodge operator associated with $g$, and by $d^*$ and $\Delta = (dd^* + d^*d)$ the corresponding operators; similarly for $t \in (-\epsilon,\epsilon)$ we denote by $*_t$, $d^{*_t}$ and $\Delta_t$ the operators associated with $g_t$. We want to understand the infinitesimal variations of the harmonic representative of a fixed cohomology class along the path $\{g_t\}_{t \in (-\epsilon,\epsilon)}$. We start by describing the deformations of the operator $d^{*_t}$.

\begin{lem}     \label{lem:deldstar}
    If $\eta \in \Omega^k(M)$ is a $k$-form, we have
    \begin{equation*}
        \left. \frac{\partial d^{*_t}\eta}{\partial t} \right|_{t=0} = 2 h \cdot (d^* \eta) - 2 d^* (h \cdot \eta) .
    \end{equation*}
\end{lem}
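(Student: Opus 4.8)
The plan is to avoid differentiating the pointwise formula for $d^{*_t}$ directly, and instead to use its characterisation as the formal adjoint of the (metric-independent) exterior derivative $d$: for $\alpha \in \Omega^{k-1}(M)$ and $\beta \in \Omega^k(M)$ fixed independently of $t$,
\[
\int_M \langle d\alpha, \beta \rangle_{g_t}\, \mu_{g_t} = \int_M \langle \alpha, d^{*_t}\beta \rangle_{g_t}\, \mu_{g_t} .
\]
First I would differentiate this identity at $t=0$: the left-hand side carries $t$-dependence only through the pointwise inner product and the volume form, the right-hand side additionally through $d^{*_t}\beta$, so that extracting the coefficient of $\alpha$ and invoking nondegeneracy of the $L^2$-pairing will produce a formula for $\left.\frac{\del}{\del t}\right|_{t=0} d^{*_t}\beta$; taking $\beta = \eta$ then gives the lemma.

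For the computation itself I would invoke Lemma \ref{lem:firstvariationh}: its first formula gives $\left.\frac{\del}{\del t}\right|_{t=0} \langle \omega, \omega'\rangle_{g_t} = - \langle h\cdot\omega, \omega'\rangle_g - \langle \omega, h\cdot\omega'\rangle_g$, and its third gives $\left.\frac{\del}{\del t}\right|_{t=0}\mu_{g_t} = \tr(h)\,\mu_g = 0$ since $h$ is trace-free, so all volume-variation terms disappear on both sides. Because $h$ is $g$-symmetric, $\delta_h$ is self-adjoint for the induced inner product on forms (the first lemma of \S\ref{subsection:posform}), i.e.\ $\langle h\cdot\omega, \omega'\rangle_g = \langle \omega, h\cdot\omega'\rangle_g$ pointwise. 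Applying this, the left-hand side collapses to $-2\int_M \langle d\alpha, h\cdot\beta\rangle_g\,\mu_g = -2\int_M \langle \alpha, d^*(h\cdot\beta)\rangle_g\,\mu_g$ after one integration by parts (valid since $M$ is compact), while on the right-hand side the metric-variation part collapses to $-2\int_M \langle \alpha, h\cdot d^*\beta\rangle_g\,\mu_g$, leaving also the term $\int_M \langle \alpha, \left.\frac{\del}{\del t}\right|_{t=0} d^{*_t}\beta\rangle_g\,\mu_g$. Equating the two sides and cancelling yields
\[
\int_M \Big\langle \alpha,\ \left.\tfrac{\del}{\del t}\right|_{t=0} d^{*_t}\beta - 2\, h\cdot(d^*\beta) + 2\, d^*(h\cdot\beta) \Big\rangle_g\, \mu_g = 0
\]
for all $\alpha \in \Omega^{k-1}(M)$, and nondegeneracy of the pairing finishes the proof.

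I do not expect a genuine obstacle here: the argument is essentially bookkeeping of the product rule and of signs. The one point that deserves care is keeping straight which quantities depend on $t$ — only $d^{*_t}\beta$, the pointwise pairing, and $\mu_{g_t}$, whereas $\alpha$, $d\alpha$, $\beta$, $d^*\beta$ are frozen at their values at $t=0$ — and in particular not conflating $\left.\frac{\del}{\del t}\right|_{t=0} \langle \alpha, d^{*_t}\beta \rangle_{g_t}$ with $\langle \alpha, \left.\frac{\del}{\del t}\right|_{t=0} d^{*_t}\beta \rangle_g$, their difference being precisely the metric-variation term that generates the $-2\, h\cdot(d^*\beta)$ contribution.

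An alternative would be to differentiate the pointwise identity $d^{*_t}\eta = c_k\, *_t d\, *_t \eta$ (with $c_k$ a sign depending only on $k$, since $\dim M = 7$) using the second formula of Lemma \ref{lem:firstvariationh} for $\left.\frac{\del}{\del t}\right|_{t=0} *_{g_t}$. This works too, but it is messier: after differentiating, the cross-terms regroup as $c_k\, *\big((d\circ\delta_h - \delta_h\circ d)(*\eta)\big)$, and identifying this with $h\cdot(d^*\eta) - d^*(h\cdot\eta)$ forces one to unwind the commutator $[d,\delta_h]$, which a priori involves $\nabla h$, and to check that those derivative-of-$h$ terms cancel. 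Using Corollary \ref{cor:hstar} to replace $h\cdot(*_g\omega) - *_g(h\cdot\omega)$ by $-2\, *_g(h\cdot\omega)$ throughout does streamline this somewhat, but the adjoint route above avoids the issue altogether, so that is the one I would write up.
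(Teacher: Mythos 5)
Your proof is correct, but it takes a genuinely different route from the paper. The paper simply differentiates the pointwise formula $d^{*_t}\eta = (-1)^k *_t d *_t \eta$ and uses Corollary \ref{cor:hstar} to write the variation of the Hodge star in the two equivalent forms $\left.\frac{\del *_t}{\del t}\right|_{t=0}\omega = 2h\cdot(*\omega) = -2*(h\cdot\omega)$; choosing the first form for the outer star (applied to $d*\eta$, giving $2h\cdot(d^*\eta)$) and the second for the inner star (applied to $\eta$, giving $-2d^*(h\cdot\eta)$ after $*d$ acts), the result drops out immediately with no commutator $[d,\delta_h]$ and no $\nabla h$ ever appearing --- so your stated reason for avoiding this route is unfounded, and it is in fact the shorter argument. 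Your adjoint-characterisation proof is nonetheless valid and complete: the product rule, the vanishing of the $\tr(h)\,\mu_g$ terms, the self-adjointness of $\delta_h$, the integration by parts on the compact $M$, and the final appeal to nondegeneracy of the $L^2$-pairing on $\Omega^{k-1}(M)$ are all in order. What your route buys is conceptual transparency (it makes clear that the formula is forced by $d$ being metric-independent) and independence from the explicit sign convention for $d^*$; what it costs is that it is an $L^2$/weak argument requiring compactness, whereas the paper's identity is pointwise and local.
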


\begin{proof}
    By definition, $d^{*_t} \eta = (-1)^k *_t d *_t \eta$. Using Lemma \ref{lem:firstvariationh}, we know that
    \begin{equation*}
        \left. \frac{\partial *_t}{\partial t} \right|_{t=0} \eta = h \cdot (* \eta) - * (h \cdot \eta) = 2 h \cdot (* \eta) = - 2 * (h \cdot \eta)
    \end{equation*}
    where the last two inequalities follow from Corollary \ref{cor:hstar}, since $h$ is trace-free and self-adjoint for the metric $g$. The lemma follows.
\end{proof}

\begin{lem}     \label{lem:deltadeleta}
    Let $\{\eta_t\}_{t \in (-\epsilon,\epsilon)}$ be a smooth family of $k$-forms on $M$, such that $\eta_t$ is harmonic for the metric $g_t$ for all $|t| < \epsilon$, and let $\eta = \eta_0$. Then we have:
    \begin{equation*}
        \Delta \left. \frac{\partial \eta_t}{\partial t} \right|_{t=0} = 2 d d^* (h \cdot \eta) .
    \end{equation*}
\end{lem}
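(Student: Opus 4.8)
The plan is to differentiate the harmonicity condition $\Delta_t \eta_t = 0$ at $t = 0$ and exploit the fact that $\eta = \eta_0$ is harmonic. Since $\Delta_t = d\,d^{*_t} + d^{*_t} d$ (the exterior derivative $d$ being independent of the metric), applying the Leibniz rule gives
\begin{equation*}
    0 = \left.\frac{\del}{\del t}\right|_{t=0}(\Delta_t \eta_t) = \Delta \left.\frac{\del \eta_t}{\del t}\right|_{t=0} + \left(\left.\frac{\del \Delta_t}{\del t}\right|_{t=0}\right)\eta .
\end{equation*}
So the task reduces to computing the second term, i.e. evaluating $\bigl(d\, \dot{d^*} + \dot{d^*} d\bigr)\eta$ where $\dot{d^*}$ denotes the $t$-derivative of $d^{*_t}$ at $t=0$, which is given explicitly by Lemma \ref{lem:deldstar}.

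Next I would use harmonicity of $\eta$: since $\eta$ is $\Delta$-harmonic on a compact manifold it is both closed and co-closed, $d\eta = 0$ and $d^*\eta = 0$. The term $\dot{d^*}(d\eta)$ therefore vanishes outright. For the remaining term $d\,\dot{d^*}\eta$, Lemma \ref{lem:deldstar} gives $\dot{d^*}\eta = 2h\cdot(d^*\eta) - 2 d^*(h\cdot\eta) = -2 d^*(h\cdot\eta)$, using $d^*\eta = 0$. Hence
\begin{equation*}
    \left(\left.\frac{\del \Delta_t}{\del t}\right|_{t=0}\right)\eta = d\bigl(\dot{d^*}\eta\bigr) + \dot{d^*}(d\eta) = -2\, d\, d^*(h\cdot\eta),
\end{equation*}
and substituting back into the differentiated equation yields $\Delta \dot\eta = 2\, d\, d^*(h\cdot\eta)$, which is exactly the claimed formula.

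The only genuine subtlety — and the step I would be most careful about — is justifying the interchange of $\frac{\del}{\del t}$ with the operator $\Delta_t$ applied to the $t$-dependent form $\eta_t$, i.e. confirming that $\frac{\del}{\del t}(\Delta_t \eta_t) = (\dot\Delta_t\eta_t) + \Delta_t \dot\eta_t$ holds at $t=0$ with no extra contributions, and that the family $\{\eta_t\}$ is differentiable in $t$ with values in a suitable function space so that all these derivatives make sense. Since we are told $\{\eta_t\}$ is a smooth family of forms and $\{g_t\}$ is smooth, this is a standard smoothness-of-composition argument; the key algebraic input is simply that $d$ does not depend on $t$, so only $d^{*_t}$ needs to be differentiated. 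No integration by parts or positivity is needed — everything is a pointwise identity of differential operators combined with the vanishing $d\eta = d^*\eta = 0$.
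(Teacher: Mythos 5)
Your proposal is correct and follows essentially the same route as the paper: differentiate the identity $(d d^{*_t} + d^{*_t} d)\eta_t = 0$ at $t=0$, discard the term involving $d\eta$ by closedness, and evaluate $\left.\frac{\del d^{*_t}}{\del t}\right|_{t=0}\eta = -2d^*(h\cdot\eta)$ via Lemma \ref{lem:deldstar} together with $d^*\eta = 0$. The paper's only cosmetic difference is that it uses $d\eta_t = 0$ for all $t$ (rather than just at $t=0$) to drop that term before differentiating.
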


\begin{proof}
    The $k$-form $\eta_t$ is closed for all $t \in (-\epsilon, \epsilon)$, and thus if we differentiate the equality
    \begin{equation*}
        (d^{*_t} d + d d^{*_t}) \eta_t = 0
    \end{equation*}
    with respect to $t$ we obtain
    \begin{equation*}
        d \frac{\partial d^{*_t}}{\partial t} \eta_t + \Delta_t \frac{\partial \eta_t}{\partial t} = 0 .
    \end{equation*}
    At $t=0$, $h_0 = h$ is trace-free, $\eta_0 = \eta$ satisfies $d^* \eta = 0$, and thus the previous lemma yields
    \begin{equation*}
        \left. \frac{\partial d^{*_t}}{\partial t} \right|_{t=0} \eta = 2 h \cdot (d^* \eta) - 2 d^* (h \cdot \eta) = - 2 d^*(h \cdot \eta) .
    \end{equation*}
    From this it follows that
    \begin{equation*}
        - 2 dd^* (h\cdot \eta)  + \Delta \left. \frac{\partial \eta_t}{\partial t} \right|_{t=0} = 0
    \end{equation*}
    which proves our claim.
\end{proof}

In the next part we will need the following consequence of the previous lemmas:

\begin{cor}     \label{cor:decompgeta}
    Let $\eta$ be harmonic $k$-form with respect to the metric $g$. For $t \in (-\epsilon,\epsilon)$, we denote by $\eta_t$ the harmonic representative of $[\eta] \in H^k(M)$ for the metric $g_t$ and by $\nu_t$ the harmonic representative of the cohomology class $[* \eta] \in H^{7-k}(M)$. Then the decomposition of $h \cdot \eta$ into harmonic, exact and co-exact parts reads:
    \begin{equation*}
        h \cdot \eta = \mathscr H(h \cdot \eta) + \left. \frac{1}{2}\frac{\partial \eta_t}{\partial t} \right|_{t=0} - \frac{1}{2} * \left. \frac{\partial \nu_t}{\partial t} \right|_{t=0} \cdot
    \end{equation*}
\end{cor}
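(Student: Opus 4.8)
The plan is to match the three terms in the statement with the three terms of the $g$-Hodge decomposition of $h\cdot\eta$. Since $\eta$ is a $k$-form, $h\cdot\eta$ is again a $k$-form (Lemma on $\delta_h$), so write its $g_0$-orthogonal decomposition into harmonic, exact and co-exact parts as $h\cdot\eta = \mathscr H(h\cdot\eta) + d\alpha + d^*\beta$. I will show that $d\alpha = \tfrac12\left.\tfrac{\del\eta_t}{\del t}\right|_{t=0}$ and that $d^*\beta = -\tfrac12 *\left.\tfrac{\del\nu_t}{\del t}\right|_{t=0}$; adding the three pieces then gives the claim.

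For the exact part, set $\theta := \left.\tfrac{\del\eta_t}{\del t}\right|_{t=0}$. First, $\theta$ is exact: it is closed (differentiate $d\eta_t=0$) and its cohomology class is $\left.\tfrac{d}{dt}\right|_{t=0}[\eta_t]=0$ because $[\eta_t]=[\eta]$ is constant. Next, Lemma \ref{lem:deltadeleta} gives $\Delta\theta = 2\,dd^*(h\cdot\eta)$. Since $d^*$ annihilates the harmonic summand $\mathscr H(h\cdot\eta)$ and the co-exact summand $d^*\beta$, we have $dd^*(h\cdot\eta) = dd^*(d\alpha) = \Delta(d\alpha)$, using $d^*d(d\alpha)=0$. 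Hence $\Delta(\theta - 2\,d\alpha)=0$, so $\theta - 2\,d\alpha$ is harmonic; being also exact, it vanishes, and $\theta = 2\,d\alpha$.

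For the co-exact part, I apply the identity just proved to the form $*\eta$. Since $\eta$ is $g$-harmonic, so is $*\eta$, and $h$ is still trace-free and $g$-symmetric, so the argument above yields that the exact part of $h\cdot(*\eta)$ equals $\tfrac12\rho$, where $\rho := \left.\tfrac{\del\nu_t}{\del t}\right|_{t=0}$. On the other hand, Corollary \ref{cor:hstar} gives $h\cdot(*\eta) = -*(h\cdot\eta)$, and applying $*$ to the Hodge decomposition of $h\cdot\eta$ sends the harmonic summand to a harmonic form, the exact summand $*(d\alpha)$ to a co-exact form, and the co-exact summand $*(d^*\beta)$ to an exact form (because $*$ conjugates $d$ to $\pm d^*$, via $d^*=\pm *d*$). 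Therefore the exact part of $h\cdot(*\eta) = -*\mathscr H(h\cdot\eta) - *(d\alpha) - *(d^*\beta)$ is precisely $-*(d^*\beta)$, so $\tfrac12\rho = -*(d^*\beta)$. Applying $*$ once more and using $**=\id$ on forms in dimension $7$ (since $p(7-p)$ is always even), I get $d^*\beta = -\tfrac12 *\rho$, which is the desired formula for the co-exact part.

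\textbf{Expected obstacle.} Nothing here is deep; the points to be careful about are (i) the smoothness of $t\mapsto\eta_t$ and $t\mapsto\nu_t$, which is standard perturbation theory for the Laplacians $\Delta_t$ whose kernels have locally constant dimension — this is implicit in the hypotheses of Lemma \ref{lem:deltadeleta}; (ii) the claim that $*$ interchanges $\im(d)$ and $\im(d^*)$, which follows from $d^*=\pm *d*$ together with $**=\id$; and (iii) making sure Lemma \ref{lem:deltadeleta} genuinely applies to the family $\{\nu_t\}$, i.e. that $*\eta$ is $g$-harmonic and that the hypotheses on $h$ are unchanged on passing from $\eta$ to $*\eta$ — both hold by construction. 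The only mild subtlety is keeping the signs straight in step three, which is handled cleanly by identifying \emph{which} of the three Hodge summands of $-*(h\cdot\eta)$ is exact rather than tracking the sign in $d^*=\pm *d*$.
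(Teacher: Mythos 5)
Your proposal is correct and follows essentially the same route as the paper: identify the exact part via Lemma \ref{lem:deltadeleta} together with the exactness of $\left.\frac{\del \eta_t}{\del t}\right|_{t=0}$, then obtain the co-exact part by applying the same argument to $*\eta$ and using $*(h\cdot\eta) = -h\cdot(*\eta)$ from Corollary \ref{cor:hstar}. Your intermediate step showing that $\theta - 2\,d\alpha$ is both harmonic and exact, hence zero, merely makes explicit a deduction the paper leaves implicit.
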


\begin{proof}
    By the previous lemma, $h \cdot \eta$ satisfies the equation
    \begin{equation*}
        \Delta \left. \frac{\partial \eta_t}{\partial t} \right|_{t=0} = 2 dd^* (h \cdot \eta) .
    \end{equation*}
    Moreover, as $\eta_t$ represents a fixed cohomology class, the $k$-forms $\frac{\partial \eta_t}{\partial t}$ are exact. Therefore, the exact part of $h \cdot \eta$ is $\frac{1}{2} \left. \frac{\partial \eta_t}{\partial t} \right|_{t=0}$. 

    The co-exact part of $h \cdot \eta$ can be deduced by symmetry. Indeed, as $*^2 = (-1)^{k(7-k)} = 1$ on $k$-forms, the co-exact part of $h \cdot \eta$ is the Hodge dual of the exact part of $* (h \cdot \eta)$. As $h$ is trace-free, Corollary \ref{cor:hstar} implies that $* (h \cdot \eta) = - h \cdot (* \eta)$. Using the above characterisation of the exact part, we deduce that the exact part of $h \cdot (* \eta)$ is precisely $\frac{1}{2} \left. \frac{\partial \nu_t}{\partial t} \right|_{t=0}$. Thus the co-exact part of $h \cdot \eta$ is $- \frac{1}{2} * \left. \frac{\partial \nu_t}{\partial t} \right|_{t=0}$.
\end{proof}

    \subsection{The third and fourth derivatives}       \label{subsection:34der}

In this part, $M$ is a compact oriented $7$-manifold with $b^1(M) = 0$ admitting torsion-free $G_2$-structures, and we aim to compute the third and fourth derivative of the potential $\F$. Using a basis $u_0,\ldots,u_n$ of $H^3(M)$, $n=b^3_{27}(M)=b^3(M)-1$, we define affine coordinates $x = (x^0,\ldots,x^n)$ on $\M$. If $\varphi$ is a torsion-free $G_2$-structures on $M$, we denote by $\eta_a \in \Omega^3(M)$ the unique $g_\varphi$-harmonic representative of the cohomology class $u_a \in H^3(M)$, and by $h_a \in C^\infty(\End(TM))$ the unique endomorphism orthogonal to $\Omega^2_{14}(M)$ such that $h_a \cdot \varphi = \eta_a$. Since $b^1(M) = 0$, the $3$-form $\eta_a$ has no $\Omega^3_7$-component, and thus $h_a$ is self-adjoint with respect to the metric $g_\varphi$. Similarly, if $\{\varphi_x\}$ is a local section of the moduli space, we denote by $\eta_{a,x} \in \Omega^3(M)$ and by $h_{a,x} \in C^\infty(\End(TM))$ the tensors associated with $\varphi_x$.

Various formulas for the third derivative of the potential already appear is the literature \cite{grigorian2009local,grigorian2010moduli,karigiannis2009hodge,lee2009geometric}. Here we give an independent derivation:

\begin{prop}        \label{prop:fabc}
    Let $\varphi$ be a torsion-free $G_2$-structure on $M$. Then the third derivative of the potential satisfies:
    \begin{equation*}
        \F_{abc}(\varphi \D) = - \frac{2}{\Vol(\varphi)} \int \langle h_c \cdot \eta_a, \eta_b \rangle_\varphi \mu_\varphi \cdot
    \end{equation*}
\end{prop}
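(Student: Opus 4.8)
The plan is to obtain $\F_{abc}$ by differentiating the closed expression \eqref{eq:fabetaab} for the Hessian of $\F$. Fix a torsion-free $G_2$-structure $\varphi$ representing $x_0 \in \M$, pick affine coordinates $x = (x^0,\ldots,x^n)$ near $x_0$, and choose a local section $\underline\varphi = (\varphi_x)_x$ of the moduli space that is adapted at $x_0$; such a section exists by the proof of \cite[Th. C]{joyce1996compacti}. For each $x$ and each index $a$ let $\eta_{a,x}$ denote the $g_{\varphi_x}$-harmonic representative of the fixed class $u_a \in H^3(M)$; these depend smoothly on $x$ by elliptic theory, and have vanishing $\Omega^3_7$-component since $b^1(M) = 0$. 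By \eqref{eq:fabetaab} the function $\F_{ab}$ coincides near $x_0$ with $x \mapsto \frac{1}{\Vol(\varphi_x)}\int\langle\eta_{a,x},\eta_{b,x}\rangle_{\varphi_x}\mu_{\varphi_x}$, so applying Lemma \ref{lem:diffxnot} with $f_x = \langle\eta_{a,x},\eta_{b,x}\rangle_{\varphi_x}$ eliminates the volume-derivative terms and leaves
\begin{equation*}
    \F_{abc}(\varphi\D) = \frac{1}{\Vol(\varphi)}\int \left.\frac{\del}{\del x^c}\right|_{x=x_0}\!\!\langle\eta_{a,x},\eta_{b,x}\rangle_{\varphi_x}\;\mu_\varphi .
\end{equation*}

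Next I would expand the integrand by the Leibniz rule into three contributions: the variation of $\eta_{a,x}$, that of $\eta_{b,x}$, and that of the pointwise inner product $\langle\cdot,\cdot\rangle_{\varphi_x}$. The first two integrate to zero: because $[\eta_{a,x}] = u_a$ is independent of $x$, the $3$-form $\left.\frac{\del\eta_{a,x}}{\del x^c}\right|_{x_0}$ is exact, hence $L^2(g_\varphi)$-orthogonal to the $g_\varphi$-harmonic form $\eta_b$, and symmetrically for the $b$-term. (This can also be extracted from Corollary \ref{cor:decompgeta}.) For the third contribution, adaptedness at $x_0$ forces $\left.\frac{\del\varphi_x}{\del x^c}\right|_{x_0}$ to be a $g_\varphi$-harmonic $3$-form in the class $u_c$, hence equal to $\eta_c = h_c\cdot\varphi$ with $h_c$ self-adjoint; the first-variation formula of Lemma \ref{lem:firstvariationphi} then gives
\begin{equation*}
    \left.\frac{\del}{\del x^c}\right|_{x=x_0}\!\!\langle\eta_a,\eta_b\rangle_{\varphi_x} = -\langle h_c\cdot\eta_a,\eta_b\rangle_\varphi - \langle\eta_a,h_c\cdot\eta_b\rangle_\varphi = -2\,\langle h_c\cdot\eta_a,\eta_b\rangle_\varphi ,
\end{equation*}
the last step using that $\delta_{h_c}$ is self-adjoint for the induced inner product on $\Lambda(T^*M)$. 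Substituting into the displayed integral yields the claimed identity.

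There is no serious obstacle here: once Lemma \ref{lem:diffxnot} is available the computation is essentially bookkeeping. The two points that need care are (i) the observation that the derivative of a harmonic representative of a fixed cohomology class is exact, so it drops out when paired in $L^2$ against a harmonic form, and (ii) consistently invoking the adaptedness hypothesis, which is simultaneously what lets us identify $\left.\frac{\del\varphi_x}{\del x^c}\right|_{x_0}$ with $\eta_c$ and what justifies discarding the $\frac{\del\mu_{\varphi_x}}{\del x^c}$ and $\frac{\del}{\del x^c}\Vol(\varphi_x)^{-1}$ terms via Lemma \ref{lem:diffxnot}. As a consistency check, Corollary \ref{cor:symyukawa} shows the cubic form $(h_1,h_2,h_3)\mapsto\langle h_3\cdot h_1\cdot\varphi,h_2\cdot\varphi\rangle_\varphi$ is totally symmetric, which matches the expected symmetry of $\F_{abc}$ in $a,b,c$.
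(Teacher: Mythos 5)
Your proof is correct and follows essentially the same route as the paper's: differentiate the Hessian formula \eqref{eq:fabetaab} along an adapted section, use Lemma \ref{lem:diffxnot} to discard the volume terms, kill the $\frac{\del\eta_{a,x}}{\del x^c}$ and $\frac{\del\eta_{b,x}}{\del x^c}$ contributions by exactness and $L^2$-orthogonality to harmonic forms, and evaluate the metric variation via Lemma \ref{lem:firstvariationphi} together with the self-adjointness of $h_c$. The only (cosmetic) difference is that you carry the $\Vol(\varphi_x)^{-1}$ factor explicitly where the paper's displayed intermediate identity omits it.
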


\begin{proof}
    Let $x = (x^0,\ldots,x^n)$ be local affine coordinates on $\M$, let $x_0$ be the coordinates of $\varphi$, and let $\{\varphi_x\}$ be a local adapted section of the moduli space through $\varphi$. Differentiating the identity (which comes from \eqref{eq:fabetaab})
    \begin{equation*}
        \F_{ab}(\varphi_x \D) = \frac{1}{\Vol(\varphi_x)}\int \langle \eta_{a,x}, \eta_{b,x} \rangle_{\varphi_x} \mu_{\varphi_x}
    \end{equation*}
    and using Lemma \ref{lem:diffxnot}, we obtain at $x=x_0$:
    \begin{multline*}
        \F_{abc}(\varphi \D) = \frac{1}{\Vol(\varphi)} \int \left. \frac{\partial g_{\varphi_x}}{\partial x^c} \right|_{x=x_0} (\eta_{a}, \eta_{b}) \mu_\varphi  \\ 
            + \frac{1}{\Vol(\varphi)} \int \langle \left. \frac{\partial \eta_{a,x}}{\partial x^c} \right|_{x=x_0} , \eta_b \rangle_\varphi \mu_\varphi + \frac{1}{\Vol(\varphi)} \int \langle \eta_a, \left. \frac{\partial \eta_{b,x}}{\partial x^c} \right|_{x=x_0} \rangle_\varphi \mu_\varphi .
    \end{multline*}
    The $3$-forms $\frac{\partial \eta_{a,x}}{\partial x^c}$ and $\frac{\partial \eta_{b,x}}{\partial x^c}$ are exact since $\eta_{a,x}$ and $\eta_{b,x}$ represent constant cohomology classes, and therefore the second and third terms above vanish. On the other hand, as the section $\{\varphi_x\}$ is adapted at $x=x_0$, we have $\left. \frac{\partial \varphi_x}{\partial x^c} \right|_{x=x_0} = \eta_c = h_c \cdot \varphi$. Thus we can compute the first term using Lemma \ref{lem:firstvariationphi} and the fact that $h_c$ is self-adjoint with respect to $g_\varphi$:
    \begin{equation*}
        \F_{abc}(\varphi \D) = - \frac{1}{\Vol(\varphi)} \int ( \langle h_c \cdot \eta_a, \eta_b \rangle_\varphi + \langle \eta_a, h_c \cdot \eta_b \rangle_\varphi ) \mu_\varphi = - \frac{2}{\Vol(\varphi)} \int \langle h_c \cdot \eta_a, \eta_b \rangle_\varphi \mu_\varphi
    \end{equation*}
    at $x=x_0$.
\end{proof}

We now proceed with the derivation of the fourth derivative. As a first step, we prove a formula which depends on a particular choice of local section of the moduli space:

\begin{prop}        \label{prop:fabcdi}
    Let $\varphi$ be a torsion-free $G_2$-structure, let $\{\varphi_x\}$ be a local adapted section of the moduli space through $\varphi$ and denote by $x=x_0$ the coordinates of $\varphi \D$. Then the fourth derivative of the potential satisfies:
    \begin{align*}
        \F_{abcd}(\varphi \D) = ~~& \frac{2}{\Vol(\varphi)} \int \langle h_d \cdot \eta_a - \left. \frac{\partial \eta_{a,x}}{\partial x^d} \right|_{x=x_0}, h_c \cdot \eta_b \rangle_\varphi \mu_\varphi \\
            + & \frac{2}{\Vol(\varphi)} \int \langle h_d \cdot \eta_b - \left. \frac{\partial \eta_{b,x}}{\partial x^d} \right|_{x=x_0}, h_c \cdot \eta_a \rangle_\varphi \mu_\varphi \\
            + & \frac{2}{\Vol(\varphi)} \int \langle h_d \cdot \eta_c - \left. \frac{\partial \eta_{c,x}}{\partial x^d} \right|_{x=x_0}, h_a \cdot \eta_b \rangle_\varphi \mu_\varphi .
    \end{align*}
\end{prop}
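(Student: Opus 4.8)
\emph{Proof sketch.}
The plan is to differentiate the third--derivative formula of Proposition~\ref{prop:fabc}. Since that formula holds at every torsion-free $G_2$-structure, it holds along the given section, so $\F_{abc}(\varphi_x\D) = -\frac{2}{\Vol(\varphi_x)}\int\langle h_{c,x}\cdot\eta_{a,x},\eta_{b,x}\rangle_{\varphi_x}\mu_{\varphi_x}$ for $x$ near $x_0$. Differentiating along $x^d$ at $x_0$ and applying Lemma~\ref{lem:diffxnot} (valid because the section is adapted at $x_0$) removes the variations of $\Vol(\varphi_x)$ and $\mu_{\varphi_x}$, reducing the computation to that of $\frac{\del}{\del x^d}\langle h_{c,x}\cdot\eta_{a,x},\eta_{b,x}\rangle_{\varphi_x}$ at $x_0$. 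The product rule produces four terms, from the variations of $h_{c,x}$, of $\eta_{a,x}$, of $\eta_{b,x}$, and of the pairing $\langle\cdot,\cdot\rangle_{\varphi_x}$. For the last one I would use that $\frac{\del\varphi_x}{\del x^d} = \eta_d = h_d\cdot\varphi$ at $x_0$ with $h_d$ self-adjoint (this is where $b^1(M)=0$ enters), so that by Lemma~\ref{lem:firstvariationphi} and self-adjointness of $\delta_{h_d}$ it equals $-2\langle h_d\cdot h_c\cdot\eta_a,\eta_b\rangle_\varphi$.

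Next I would match the ``easy'' terms. Self-adjointness of $\delta_{h_c}$ rewrites the $\eta_{a,x}$- and $\eta_{b,x}$-variation terms as $\langle\frac{\del\eta_{a,x}}{\del x^d},h_c\cdot\eta_b\rangle_\varphi$ and $\langle\frac{\del\eta_{b,x}}{\del x^d},h_c\cdot\eta_a\rangle_\varphi$, which account precisely for the $-\frac{\del\eta_{a,x}}{\del x^d}$ and $-\frac{\del\eta_{b,x}}{\del x^d}$ contributions in the first two summands of the claimed formula. Using the commutation relation $[\delta_h,\delta_{h^\prime}]=-\delta_{[h,h^\prime]}$ to write $h_d\cdot h_c\cdot\eta_a = h_c\cdot h_d\cdot\eta_a - [h_d,h_c]\cdot\eta_a$ and moving $\delta_{h_c}$, $\delta_{h_d}$ across the pairing, the term $-2\langle h_d\cdot h_c\cdot\eta_a,\eta_b\rangle_\varphi$ produces the contributions $\langle h_d\cdot\eta_a,h_c\cdot\eta_b\rangle_\varphi$ and $\langle h_d\cdot\eta_b,h_c\cdot\eta_a\rangle_\varphi$, plus a remainder $\langle[h_d,h_c]\cdot\eta_a,\eta_b\rangle_\varphi$. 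After collecting everything, the proposition reduces to the pointwise identity, with all derivatives evaluated at $x_0$,
\begin{equation*}
    \Big\langle\Big(\tfrac{\del h_{c,x}}{\del x^d}+[h_d,h_c]\Big)\cdot\eta_a,\ \eta_b\Big\rangle_\varphi = \Big\langle\tfrac{\del\eta_{c,x}}{\del x^d}-h_d\cdot\eta_c,\ h_a\cdot\eta_b\Big\rangle_\varphi.
\end{equation*}

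The heart of the argument is to control the endomorphism variation $\frac{\del h_{c,x}}{\del x^d}$ at $x_0$, which a priori is neither $g_\varphi$-symmetric nor compatible in an obvious way with the operation $\cdot$. Two facts resolve this. First, as $b^1(M)=0$ each $h_{c,x}$ is $g_{\varphi_x}$-self-adjoint, and differentiating this relation, together with the first variation $\frac{\del g_{\varphi_x}}{\del x^d} = g_\varphi(h_d\cdot,\cdot)+g_\varphi(\cdot,h_d\cdot)$ at $x_0$, shows that the $g_\varphi$-antisymmetric part of $\frac{\del h_{c,x}}{\del x^d}$ is exactly $[h_c,h_d]$; hence $k:=\frac{\del h_{c,x}}{\del x^d}+[h_d,h_c]$ (at $x_0$) is $g_\varphi$-symmetric. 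Second, differentiating $\eta_{c,x}=h_{c,x}\cdot\varphi_x$ and using $[h_d,h_c]\cdot\varphi=h_c\cdot\eta_d-h_d\cdot\eta_c$ (again from $[\delta_h,\delta_{h^\prime}]=-\delta_{[h,h^\prime]}$) gives $\frac{\del\eta_{c,x}}{\del x^d}-h_d\cdot\eta_c=k\cdot\varphi$. The two sides of the reduced identity then become $\langle k\cdot h_a\cdot\varphi,h_b\cdot\varphi\rangle_\varphi$ and $\langle k\cdot\varphi,h_a\cdot h_b\cdot\varphi\rangle_\varphi$, which agree because $k$ is symmetric and the cubic form of Corollary~\ref{cor:symyukawa} is fully symmetric. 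Integrating over $M$ completes the proof.

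I expect the main obstacle to be precisely this control of $\frac{\del h_{c,x}}{\del x^d}$. The resolution is the clean dichotomy that its $g_\varphi$-antisymmetric part is absorbed by the commutator correction arising when reordering the composition $\delta_{h_d}\delta_{h_c}$, while its symmetric part accounts exactly for the term $\langle h_d\cdot\eta_c-\frac{\del\eta_{c,x}}{\del x^d},h_a\cdot\eta_b\rangle_\varphi$ in the third summand of the claimed formula, once the full symmetry of the cubic form is used.
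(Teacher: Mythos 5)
Your proposal is correct, and its skeleton coincides with the paper's: differentiate the formula of Proposition~\ref{prop:fabc} along the adapted section, use Lemma~\ref{lem:diffxnot} to kill the volume and measure variations, absorb the $\frac{\del \eta_{a,x}}{\del x^d}$ and $\frac{\del \eta_{b,x}}{\del x^d}$ terms by self-adjointness of $\delta_{h_c}$, and reduce everything to controlling the term involving $\frac{\del h_{c,x}}{\del x^d}$ via Corollary~\ref{cor:symyukawa}. Where you genuinely diverge is in how that last term is tamed. The paper symmetrises: it replaces $\frac{\del h_{c,x}}{\del x^d}$ by $\frac{\del h_{c,x}}{\del x^d} + (\frac{\del h_{c,x}}{\del x^d})^{\dag_\varphi}$, observes that taking the self-adjoint part of an endomorphism amounts to projecting its image under $\cdot\,\varphi$ onto $\Omega^3_{1\oplus 27}$, exploits the $c \leftrightarrow d$ symmetry to trade $h_c \cdot \eta_d$ for $h_d \cdot \eta_c$, and then separately proves $\pi_7(h_d\cdot\eta_c - \frac{\del \eta_{c,x}}{\del x^d}) = 0$ by wedging with $\varphi$. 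You instead identify the $g_\varphi$-antisymmetric part of $\frac{\del h_{c,x}}{\del x^d}$ explicitly as $[h_c,h_d]$ (by differentiating the self-adjointness relation $h_{c,x}^{\dag_{\varphi_x}} = h_{c,x}$, which is where $b^1(M)=0$ enters for all $x$, not just $x_0$) and show that it is exactly cancelled by the commutator correction $-\delta_{[h_d,h_c]}$ produced when reordering $\delta_{h_d}\delta_{h_c}$; the symmetric remainder $k$ then satisfies $k\cdot\varphi = \frac{\del\eta_{c,x}}{\del x^d} - h_d\cdot\eta_c$ on the nose, and Corollary~\ref{cor:symyukawa} finishes. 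Your route buys a cleaner endgame — no $\pi_7$-vanishing computation and no index swap are needed, since the commutator bookkeeping does both jobs at once — at the cost of one extra first-variation computation (the antisymmetric part of $\frac{\del h_{c,x}}{\del x^d}$) that the paper avoids. Both arguments are complete and correct.
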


\begin{proof}
    To lighten notations, we will keep the $x$-dependence implicit and write $\eta_a$ and $h_a$ instead of $\eta_{a,x}$ and $h_{a,x}$ when this does not create any confusion. Also, unless otherwise noted we differentiate at $x=x_0$. By the previous proposition, the third derivative of the potential can be written:
    \begin{equation*}
        \F_{abc}(\varphi_x \D) = - \frac{1}{\Vol(\varphi_x)} \int \langle h_c \cdot \eta_a, \eta_b \rangle_{\varphi_x} \mu_{\varphi_x} - \frac{1}{\Vol(\varphi_x)} \int \langle \eta_a, h_c \cdot \eta_b \rangle_{\varphi_x} \mu_{\varphi_x} .
    \end{equation*}
    Differentiating with respect to $x^d$ at $x=x_0$ and using Lemma \ref{lem:diffxnot} we obtain:
    \begin{align}       \label{eq:fourthder}
        \begin{split}
            \F_{abcd}(\varphi \D) = & - \frac{1}{\Vol(\varphi)} \int \frac{\partial g_{\varphi_x}}{\partial x^d}(h_c \cdot \eta_a, \eta_b ) \mu_\varphi - \frac{1}{\Vol(\varphi)} \int \frac{\partial g_{\varphi_x}}{\partial x^d}( \eta_a, h_c \cdot \eta_b ) \mu_\varphi \\
            & - \frac{1}{\Vol(\varphi)} \int \langle h_c \cdot \eta_a, \frac{\partial \eta_{b,x}}{\partial x^d} \rangle_\varphi \mu_\varphi - \frac{1}{\Vol(\varphi)} \int \langle \frac{\partial \eta_{a,x}}{\partial x^d} , h_c \cdot  \eta_b \rangle_\varphi \mu_\varphi \\
            & - \frac{1}{\Vol(\varphi)} \int \langle h_c \cdot \frac{\partial \eta_{a,x}}{\partial x^d} , \eta_b \rangle_\varphi \mu_\varphi - \frac{1}{\Vol(\varphi)} \int \langle \eta_a, h_c \cdot \frac{\partial \eta_{b,x}}{\partial x^d} \rangle_\varphi \mu_\varphi \\
            & - \frac{1}{\Vol(\varphi)} \int \langle \frac{\partial h_{c,x}}{\partial x^d} \cdot \eta_a, \eta_b \rangle_\varphi \mu_\varphi - \frac{1}{\Vol(\varphi)} \int \langle \eta_a , \frac{\partial h_{c,x}}{\partial x^d} \cdot \eta_b \rangle_\varphi \mu_\varphi \cdot
        \end{split}
    \end{align}
    Since the section $\{\varphi_x\}$ is adapted, at $x=x_0$ we have $\frac{\partial \varphi_x}{\partial x^d} = \eta_d = h_d \cdot \varphi$ and by Lemma \ref{lem:firstvariationphi} we have the identities:
    \begin{equation*}
        \frac{\partial g_{\varphi_x}}{\partial x^d}(h_c \cdot \eta_a, \eta_b ) = - 2 \langle h_c \cdot \eta_a, h_d \cdot \eta_b \rangle_\varphi, ~~~  \frac{\partial g_{\varphi_x}}{\partial x^d}(\eta_a, h_c \cdot \eta_b ) = - 2 \langle h_d \cdot \eta_a, h_c \cdot \eta_b \rangle_\varphi .
    \end{equation*}
    Moreover, since the section $h_c$ of $\End(TM)$ is self-adjoint for the metric induced by $\varphi$, the second and third lines in \eqref{eq:fourthder} are equal. These observations yield:
    \begin{align}       \label{eq:fourthderb}
        \begin{split}
            \F_{abcd}(\varphi \D) = ~~ & \frac{2}{\Vol(\varphi)} \int \langle h_d \cdot \eta_a - \frac{\partial \eta_{a,x}}{\partial x^d}, h_c \cdot \eta_b \rangle_\varphi \mu_\varphi \\
            + & \frac{2}{\Vol(\varphi)} \int \langle h_d \cdot \eta_b - \frac{\partial \eta_{b,x}}{\partial x^d}, h_c \cdot \eta_a \rangle_\varphi \mu_\varphi \\
            - & \frac{1}{\Vol(\varphi)} \int \langle \frac{\partial h_{c,x}}{\partial x^d} \cdot \eta_a, \eta_b \rangle_\varphi + \langle \eta_a , \frac{\partial h_{c,x}}{\partial x^d} \cdot \eta_b \rangle_\varphi \mu_\varphi .
        \end{split}  
    \end{align}
    It remains to show that the last line in \eqref{eq:fourthderb} can be put in a form similar to the first two lines. Decomposing $\frac{\partial h_{c,x}}{\partial x^d}$ into $g_\varphi$-self-adjoint and $g_\varphi$-anti-self-adjoint parts, we can further write:
    \begin{align*}
        \langle \frac{\partial h_{c,x}}{\partial x^d} \cdot \eta_a, \eta_b \rangle_\varphi + \langle \eta_a , \frac{\partial h_{c,x}}{\partial x^d} \cdot \eta_b \rangle_\varphi & = \langle \left( \frac{\partial h_{c,x}}{\partial x^d} + \left( \frac{\partial h_{c,x}}{\partial x^d} \right)^{\dag_\varphi} \right) \cdot \eta_a , \eta_b \rangle_\varphi \\
        & = \langle \left( \frac{\partial h_{c,x}}{\partial x^d} + \left( \frac{\partial h_{c,x}}{\partial x^d} \right)^{\dag_\varphi} \right) \cdot \varphi , h_a \cdot \eta_b \rangle_\varphi 
    \end{align*}
    where the second equality follows from Corollary \ref{cor:symyukawa} and $\left( \frac{\partial h_{c,x}}{\partial x^d} \right)^{\dag_\varphi}$ denotes the adjoint of $\frac{\partial h_{c,x}}{\partial x^d}$ with respect to the metric $g_\varphi$. Taking the self-adjoint part of a section $h$ of $\End(TM)$ corresponds to projecting $h \cdot \varphi$ onto the $\Omega^3_1 \oplus \Omega^3_{27}$-components, and hence we obtain:
    \begin{equation*}
        \langle \frac{\partial h_{c,x}}{\partial x^d} \cdot \eta_a, \eta_b \rangle_\varphi + \langle \eta_a , \frac{\partial h_{c,x}}{\partial x^d} \cdot \eta_b \rangle_\varphi = 2 \langle \frac{\partial h_{c,x}}{\partial x^d} \cdot \varphi , \pi_{1 \oplus 27}(h_a \cdot \eta_b) \rangle_\varphi .
    \end{equation*}
    Differentiating the relation $h_{c,x} \cdot \varphi_x = \eta_{c,x}$ at $x=x_0$ gives $\frac{\partial h_{c,x}}{\partial x^d} \cdot \varphi_x = \frac{\partial \eta_{c,x}}{\partial x^d} - h_c \cdot \eta_d$ and thus:
    \begin{align*}
        2 \langle \frac{\partial h_{c,x}}{\partial x^d} \cdot \varphi , \pi_{1 \oplus 27}(h_a \cdot \eta_b) \rangle_\varphi & = - 2 \langle h_c \cdot \eta_d - \frac{\partial \eta_{c,x}}{\partial x^d}, \pi_{1 \oplus 27}(h_a \cdot \eta_b) \rangle_\varphi \\
        & = - 2 \langle h_d \cdot \eta_c - \frac{\partial \eta_{c,x}}{\partial x^d}, \pi_{1 \oplus 27}(h_a \cdot \eta_b) \rangle_\varphi
    \end{align*}
    where the second equality also holds because this expression is invariant under permutation of $h_c$ and $h_d$. It remains to prove that the component $\pi_7(h_d \cdot \eta_c - \frac{\partial \eta_{c,x}}{\partial x^d})$ vanishes. This component can be singled out by wedging with $\varphi$. On the one hand, we have:
    \begin{equation*}
        (h_d \cdot \eta_c ) \wedge \varphi = h_d \cdot (\eta_c \wedge \varphi ) - \eta_c \wedge (h_d \cdot \varphi) = - \eta_c \wedge \eta_d 
    \end{equation*}
    as $\eta_c \wedge \varphi = 0$ since $\pi_7(\eta_c) = 0$. On the other hand, at $x=x_0$ we can write
    \begin{equation}        \label{eq:delhetaphi}
        \frac{\partial \eta_{c,x}}{\partial x^d} \wedge \varphi_x = \frac{\partial}{\partial x^d}( \eta_{c,x} \wedge \varphi_x ) - \eta_c \wedge \frac{\partial \varphi_x}{\partial x^d} = - \eta_c \wedge \eta_d 
    \end{equation}
    since $\frac{\partial \varphi_x}{\partial x^d} = \eta_d$ at $x=x_0$. Therefore $\pi_7(h_d \cdot \eta_c - \frac{\partial \eta_c}{\partial x^d}) = 0$. Putting everything together this implies that, at $x=x_0$:
    \begin{equation*}
        \langle \frac{\partial h_{c,x}}{\partial x^d} \cdot \eta_a, \eta_b \rangle_\varphi + \langle \eta_a , \frac{\partial h_{c,x}}{\partial x^d} \cdot \eta_b \rangle_\varphi = - 2 \langle h_d \cdot \eta_c - \frac{\partial \eta_{c,x}}{\partial x^d}, h_a \cdot \eta_b \rangle_\varphi
    \end{equation*}
    which yields the claimed expression for $\F_{abcd}(\varphi \D)$.
\end{proof}

The above expression for $\F_{abcd}$ is unsatisfactory, as it involves choosing an adapted section at a point of $\M$. In order to rewrite it in a more intrinsic way, we need to decompose the $3$-forms $h_d \cdot \eta_a$, $h_d \cdot \eta_b$ and $h_d \cdot \eta_c$ using the results of the previous section:

\begin{lem}     \label{lem:hetadecomp}
    With the notations of the previous proposition, the decomposition of $h_d \cdot \eta_c$ into harmonic, exact and co-exact parts reads:
    \begin{equation*}
        h_d \cdot \eta_c = \mathscr H(h_d \cdot \eta_c) + \frac{1}{2} \left. \frac{\partial \eta_c}{\partial x^d} \right|_{x=x_0} - \frac{1}{2}*_\varphi \left. \frac{\partial \nu_c}{\partial x^d} \right|_{x=x_0}
    \end{equation*}
    where $\nu_{c,x}$ is the harmonic representative of the cohomology class $[*_\varphi \eta_c] \in H^4(M)$ for the metric induced by $\varphi_x$.
\end{lem}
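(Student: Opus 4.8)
The plan is to recognise that the asserted decomposition is nothing but Corollary~\ref{cor:decompgeta} applied to the one-parameter family of metrics obtained by moving in the $x^d$-direction through $x_0$; the only genuine subtlety is that the endomorphism $h_d$ need not be trace-free, so the corollary does not apply verbatim.

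First I would pin down the family. Fix the coordinates $x_0$ of $\varphi\D$ and consider the path $t\mapsto\varphi_{x_0+te_d}$ inside the section $\{\varphi_x\}$. Since the section is adapted at $x_0$, we have $\left.\frac{\del\varphi_x}{\del x^d}\right|_{x=x_0}=\eta_d=h_d\cdot\varphi$, so by Lemma~\ref{lem:firstvariationphi} the metrics $g_t:=g_{\varphi_{x_0+te_d}}$ satisfy $g_0=g_\varphi$ and $\left.\frac{\del g_t}{\del t}\right|_{t=0}=2\,g_\varphi(h_d\cdot,\cdot)$. Moreover $\eta_{c,x_0+te_d}$ is the $g_t$-harmonic representative of $u_c=[\eta_c]$, and $\nu_{c,x_0+te_d}$ is the $g_t$-harmonic representative of the fixed class $[*_\varphi\eta_c]\in H^4(M)$, so that $\left.\frac{\del\eta_c}{\del x^d}\right|_{x=x_0}$ and $\left.\frac{\del\nu_c}{\del x^d}\right|_{x=x_0}$ are precisely the $t$-derivatives at $t=0$ of these two families of harmonic representatives.

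Next I would deal with the trace. Write $h_d=\lambda_d\,\id+\tilde h_d$ with $\lambda_d=\tfrac17\tr(h_d)$ and $\tilde h_d$ trace-free; as $b^1(M)=0$, $h_d$ and hence $\tilde h_d$ are $g_\varphi$-self-adjoint. The key observation is that $\lambda_d$ is a constant function on $M$: indeed $\pi_1(\eta_d)=3\lambda_d\,\varphi$ is harmonic because the Laplacian preserves the type decomposition, and a harmonic form in $\Omega^3_1(M)$ must be a constant multiple of $\varphi$ (since $d(f\varphi)=df\wedge\varphi$ and $d(f\,\Theta(\varphi))=df\wedge\Theta(\varphi)$ vanish only when $f$ is constant). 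Since rescaling a metric by a positive constant leaves closed and co-closed forms unchanged, the family $\hat g_t:=(1+2\lambda_d t)^{-1}g_t$ has the same harmonic forms as $g_t$ for each $t$, while $\left.\frac{\del\hat g_t}{\del t}\right|_{t=0}=2\,g_\varphi(\tilde h_d\cdot,\cdot)$. Applying Corollary~\ref{cor:decompgeta} to the trace-free self-adjoint $\tilde h_d$, the family $\{\hat g_t\}$, and $\eta=\eta_c$ then gives
\begin{equation*}
    \tilde h_d\cdot\eta_c=\mathscr H(\tilde h_d\cdot\eta_c)+\tfrac12\left.\frac{\del\eta_c}{\del x^d}\right|_{x=x_0}-\tfrac12\,*_\varphi\left.\frac{\del\nu_c}{\del x^d}\right|_{x=x_0}.
\end{equation*}

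Finally, since $\id\cdot\eta_c=3\eta_c$ and $\eta_c$ is $g_\varphi$-harmonic, adding $3\lambda_d\eta_c$ to both sides produces $h_d\cdot\eta_c$ on the left, the \emph{same} exact and co-exact terms on the right, and the harmonic term $3\lambda_d\eta_c+\mathscr H(\tilde h_d\cdot\eta_c)$; by uniqueness of the Hodge decomposition this last term must equal $\mathscr H(h_d\cdot\eta_c)$, which is exactly the claimed formula. I expect the only real work to be the bookkeeping around the trace component of $h_d$ and the verification that it is locally constant; once the metric family has been identified, the statement is a direct application of Corollary~\ref{cor:decompgeta}.
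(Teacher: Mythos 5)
Your proof is correct, and it rests on the same key ingredient as the paper's, namely Corollary~\ref{cor:decompgeta}; the difference is purely in how the failure of $h_d$ to be trace-free is handled. The paper performs a linear change of coordinates so that $\eta_0$ is proportional to $\varphi$ and $\eta_1,\ldots,\eta_n$ lie in $\mathscr H^3_{27}(M,\varphi)$, then treats $d=0$ (where $h_d$ is a constant multiple of the identity, $h_d\cdot\eta_c$ is harmonic, and harmonic representatives are unchanged under rescaling) and $d\geq 1$ (where $h_d$ is trace-free and the corollary applies directly) as separate cases — implicitly using that both sides of the identity are linear in the direction $\tfrac{\del}{\del x^d}$. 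You instead split $h_d=\lambda_d\,\id+\tilde h_d$ pointwise, verify that $\lambda_d$ is constant via the harmonicity of $\pi_1(\eta_d)$, absorb the trace part into a constant conformal rescaling $\hat g_t$ of the metric family so that Corollary~\ref{cor:decompgeta} applies to $\tilde h_d$, and add back the harmonic term $3\lambda_d\eta_c$. This buys you a proof that works for an arbitrary direction $d$ at once, without the change of coordinates or the unstated appeal to linearity in $d$; the paper's version is shorter but leaves that reduction to the reader. Both arguments are sound, and your verification that the trace component is locally constant (via $df\wedge\varphi=0\Rightarrow df=0$) is exactly the point that makes the reduction legitimate.
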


\begin{proof}
    After applying a linear change of coordinates if necessary, we may assume that at $x = x_0$ the harmonic form $\eta_0$ is proportional to $\varphi$ and $\eta_1, \ldots, \eta_n$ are in $\mathscr H^3_{27}(M,\varphi)$. Thus if $d = 0$, $h_d \in C^{\infty}(\End(TM))$ is a constant multiple of the identity, and therefore $h_d \cdot \eta_c$ is harmonic. Moreover, variations of $\varphi$ in the direction $\eta_0$ correspond to scaling the $G_2$-structure, and the harmonic representatives of a fixed cohomology class are constant under scaling of the metric. Therefore the proposition holds if $d = 0$. On the other hand, if $d = 1,\ldots,n$ then the result follows from Corollary \ref{cor:decompgeta}.
\end{proof}

As a consequence of this lemma, we can write with the notations of Proposition \ref{prop:fabcdi}
\begin{equation*}
    h_d \cdot \eta_c - \left. \frac{\partial \eta_{c,x}}{\partial x^d} \right|_{x=x_0} = \mathscr{H}(h_d \cdot \eta_c) + G_\Delta((d^*d-dd^*)(h_d \cdot \eta_c))
\end{equation*}
where $G_\Delta$ denotes the Green's function of the Laplacian (acting on the orthogonal component of the space of harmonic forms) associated with $g_\varphi$. Moreover, we can use Proposition \ref{prop:fabc} to decompose the harmonic $3$-form $\mathscr H(h_d \cdot \eta_c)$ in the basis $\eta_0, \cdots,\eta_n$ as:
\begin{equation*}
    \mathscr H(h_d \cdot \eta_c) = \frac{\G^{kl}}{\Vol(\varphi)} \int \langle h_d \cdot \eta_c, \eta_k \rangle_\varphi \mu_\varphi \cdot \eta_l = - \frac{1}{2} \G^{kl} \F_{cdk} \eta_l 
\end{equation*}
and thus
\begin{equation*}
    \frac{2}{\Vol(\varphi)} \int \langle \mathscr H (h_d \cdot \eta_c) , \mathscr H (h_a \cdot \eta_b) \rangle_\varphi \mu_\varphi = \frac{1}{2} \G^{kl} \F_{abk}\F_{cdl} .
\end{equation*}
Therefore, we obtain a formula which does not depend on any choice of local section: 

\begin{thm}         \label{thm:fabcd}
    The fourth derivative of the potential is given by
    \begin{equation*}
    \mathscr F_{abcd} = \frac 1 2 \G^{kl} \left( \F_{abk} \F_{cdl} + \F_{ack}\F_{bdl} + \F_{adk} \F_{bcl} \right) + \mathscr{E}_{abcd} + \mathscr{E}_{cabd} + \mathscr{E}_{cbad}
    \end{equation*}
    where for any torsion-free $G_2$-structure $\varphi$ on $M$ we have
    \begin{equation*}
        \mathscr{E}_{abcd}(\varphi \D) = \frac{2}{\Vol(\varphi)} \int \langle G_\Delta((d^*d-dd^*) h_d \cdot \eta_c), h_a \cdot \eta_b \rangle_\varphi \mu_\varphi .
    \end{equation*}
\end{thm}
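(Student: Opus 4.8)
The plan is to start from the section-dependent expression of Proposition \ref{prop:fabcdi} and remove the dependence on the adapted section by Hodge-decomposing the three terms it contains. Each has the form $\tfrac{2}{\Vol(\varphi)}\int\langle h_\delta\cdot\eta_\gamma - \partial_\delta\eta_{\gamma,x},\, h_\alpha\cdot\eta_\beta\rangle_\varphi\mu_\varphi$, so I first want a section-free expression for $h_\delta\cdot\eta_\gamma - \partial_\delta\eta_{\gamma,x}$. By Lemma \ref{lem:hetadecomp} (which rests on Corollary \ref{cor:decompgeta}), the exact part of $h_\delta\cdot\eta_\gamma$ is exactly $\tfrac12\partial_\delta\eta_{\gamma,x}$ and its co-exact part is $-\tfrac12*_\varphi\partial_\delta\nu_{\gamma,x}$; hence subtracting $\partial_\delta\eta_{\gamma,x}$ leaves the harmonic part $\mathscr H(h_\delta\cdot\eta_\gamma)$ and the co-exact part unchanged but reverses the sign of the exact part. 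Writing $E,C$ for the exact and co-exact parts of $h_\delta\cdot\eta_\gamma$, the key point is that $C-E$ is recoverable from $h_\delta\cdot\eta_\gamma$ alone: since $\Delta E = dd^*E$ and $\Delta C = d^*dC$ one has $(d^*d-dd^*)(h_\delta\cdot\eta_\gamma) = \Delta(C-E)$, so applying the Green operator $G_\Delta$ (which inverts $\Delta$ on the orthogonal complement of the harmonic forms) gives $C-E = G_\Delta\bigl((d^*d-dd^*)(h_\delta\cdot\eta_\gamma)\bigr)$; the minus sign in front of $dd^*$ is exactly what produces the sign reversal. This yields the section-free identity
\[
h_\delta\cdot\eta_\gamma - \partial_\delta\eta_{\gamma,x} = \mathscr H(h_\delta\cdot\eta_\gamma) + G_\Delta\bigl((d^*d-dd^*)(h_\delta\cdot\eta_\gamma)\bigr).
\]

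Next I would unpack the harmonic part. Since $b^1(M)=0$ the metric in affine coordinates is $\G_{kl}=\F_{kl}=\tfrac1{\Vol(\varphi)}\int\langle\eta_k,\eta_l\rangle_\varphi\mu_\varphi$, and expanding $\mathscr H(h_\delta\cdot\eta_\gamma)$ in the basis $\eta_0,\dots,\eta_n$ by pairing against the $\eta_k$ and invoking Proposition \ref{prop:fabc} gives $\mathscr H(h_\delta\cdot\eta_\gamma) = -\tfrac12\G^{kl}\F_{\gamma\delta k}\,\eta_l$. Substituting the decomposition into each of the three integrals of Proposition \ref{prop:fabcdi}, and using that a harmonic form is $L^2$-orthogonal to the exact and co-exact parts, each integral splits into a harmonic-against-harmonic term $\tfrac{2}{\Vol(\varphi)}\int\langle\mathscr H(h_\delta\cdot\eta_\gamma),\mathscr H(h_\alpha\cdot\eta_\beta)\rangle_\varphi\mu_\varphi = \tfrac12\G^{kl}\F_{\gamma\delta k}\F_{\alpha\beta l}$ (using $\G^{kl}\G_{ln}=\delta^k_n$) plus a remainder equal to $\mathscr E_{\alpha\beta\gamma\delta}(\varphi\D)$.

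Finally I would collect terms. Reading off the slot-patterns $(\gamma,\delta,\alpha,\beta) = (a,d,c,b),(b,d,c,a),(c,d,a,b)$ from the three terms of Proposition \ref{prop:fabcdi}, the harmonic contributions are $\tfrac12\G^{kl}\F_{adk}\F_{bcl}$, $\tfrac12\G^{kl}\F_{bdk}\F_{acl}$, $\tfrac12\G^{kl}\F_{cdk}\F_{abl}$ and the remainders are $\mathscr E_{cbad}$, $\mathscr E_{cabd}$, $\mathscr E_{abcd}$. Using symmetry of $\G^{kl}$ to relabel $k\leftrightarrow l$ in the first two harmonic terms, their sum is $\tfrac12\G^{kl}\bigl(\F_{abk}\F_{cdl}+\F_{ack}\F_{bdl}+\F_{adk}\F_{bcl}\bigr)$, which together with the remainders is the stated formula. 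I would also emphasise that $\mathscr E_{abcd}(\varphi\D)$ as written uses only $g_\varphi$, the harmonic forms $\eta_k$, the endomorphisms $h_k$ and $G_\Delta$, with no surviving choice of section --- this is the whole gain over Proposition \ref{prop:fabcdi} --- and, as a consistency check, that the right-hand side inherits the full symmetry of $\F_{abcd}$, the $\G\F\F$ part being manifestly symmetric and the remaining symmetries of $\mathscr E$ following from the Yukawa-type identities of Corollary \ref{cor:symyukawa}. The one genuine difficulty is bookkeeping: fixing the sign of the exact part after the subtraction and checking it is reproduced by the $-dd^*$ term, then aligning the index patterns of the three terms with the symmetric presentation in the statement; no analytic input beyond Lemma \ref{lem:hetadecomp} and Corollary \ref{cor:decompgeta} is needed.
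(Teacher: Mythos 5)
Your proposal is correct and follows essentially the same route as the paper: starting from Proposition \ref{prop:fabcdi}, using Lemma \ref{lem:hetadecomp} to identify $h_d\cdot\eta_c - \partial_d\eta_{c,x}$ with $\mathscr H(h_d\cdot\eta_c) + G_\Delta((d^*d-dd^*)(h_d\cdot\eta_c))$, expanding the harmonic part via Proposition \ref{prop:fabc}, and collecting the three index patterns. Your explicit justification that $\Delta(C-E)=(d^*d-dd^*)(h_d\cdot\eta_c)$ is a detail the paper leaves implicit, and your bookkeeping of the slot patterns and signs matches the paper's exactly.
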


\begin{rem}     \label{rem:exactparts}
    Said in words, the integral $\int \langle G_\Delta((d^*d-dd^*) h_d \cdot \eta_c), h_a \cdot \eta_b \rangle_\varphi \mu_\varphi$ is the $L^2$-inner product of the co-exact parts of $h_a \cdot \eta_b$ and $h_d \cdot \eta_c$ minus the $L^2$-inner product of the exact parts of $h_a \cdot \eta_b$ and $h_d \cdot \eta_c$. Hence the term $\mathscr{E}_{abcd}$ vanishes exactly when these inner products are equal.
\end{rem}

\begin{rem}
    Since the operator $G_\Delta(d^*d-dd^*)$ is self-adjoint we have $\mathscr{E}_{abcd} = \mathscr{E}_{dcba}$. A slightly less obvious symmetry is the fact that $\mathscr{E}_{abcd} = \mathscr{E}_{bacd}$, which we can prove in two ways. The first comes from the symmetry of the partial derivatives of $\F$, which implies that $\mathscr{E}_{abcd} + \mathscr{E}_{cabd} + \mathscr{E}_{cbad}$ is fully symmetric in its indices. The sum of the last two terms is symmetric under permutations of $a$ and $b$, and hence the first term $\mathscr{E}_{abcd}$ must be symmetric in the indices $a$ and $b$. As a sanity check, we can also recover this symmetry property from the expression given in Theorem \ref{thm:fabcd}. Indeed we can deduce from Lemma \ref{lem:derivation} the expression
    \begin{equation*}
        \mathscr{E}_{abcd} - \mathscr{E}_{bacd} = \frac{2}{\Vol(\varphi)} \int \langle G_\Delta((d^*d-dd^*) h_d \cdot \eta_c), [h_b,h_a] \cdot \varphi \rangle_\varphi \mu_\varphi .
    \end{equation*}
    Now $[h_b,h_a]$ is anti-self-adjoint for the metric $g_\varphi$ and hence the $3$-form $[h_a,h_b]$ is of type $\Omega^3_7$. In particular, it is orthogonal to the space of harmonic $3$-forms, and hence Lemma \ref{lem:hetadecomp} implies that if we choose a section $\varphi_x$ of the moduli space adapted at $x = x_0$ we have
    \begin{equation*}
        \mathscr{E}_{abcd}(x_0) - \mathscr{E}_{bacd}(x_0) = \frac{2}{\Vol(\varphi)} \int \langle h_d \cdot \eta_c - \left. \frac{\partial \eta_{c,x}}{\partial x^d} \right|_{x=x_0}, [h_b,h_a] \cdot \varphi \rangle_\varphi \mu_\varphi .
    \end{equation*}
    In the proof of Proposition \ref{prop:fabcdi}, we showed that $\pi_7(h_d \cdot \eta_c -  \left. \frac{\partial \eta_{c,x}}{\partial x^d} \right|_{x=x_0}) = 0$ which means that the expression under the integral vanishes identically for type reasons. Hence we recover the fact that $\mathscr{E}_{abcd} = \mathscr{E}_{bacd}$.

    Besides the above symmetries (and the ones we can deduce from them), there is no reason to think that $\mathscr{E}_{abcd}$ is fully symmetric in its indices; only the combination of the terms $\mathscr{E}_{abcd} + \mathscr{E}_{cabd} + \mathscr{E}_{cbad}$ is.
\end{rem}

    \subsection{Yukawa coupling and curvatures}     \label{subsection:curvatures}

In this part, we want to interpret the expressions of the third and fourth derivatives of the potential in geometric terms and relate them to the curvatures of the moduli spaces. Let us denote by $\nabla$ the flat connection coming from the local diffeomorphism $\pi : \M \rightarrow H^3(M)$ and $\nabla^\G$ the Levi-Civita of the metric $\G$. Then there is a unique matrix-valued $1$-form $\gamma$ on $\M$, called the \emph{difference tensor} of the Hessian structure $(\nabla,\G)$, such that $\nabla^\G = \nabla + \gamma$. In local affine coordinates $x=(x^0, \ldots , x^n)$, the difference tensor can be written as
\begin{equation*}
    \gamma = \Gamma^k_{ab} dx^a dx^b \otimes \frac{\partial}{\partial x^k}
\end{equation*}
where $\Gamma^k_{ab}$ are the Christoffel symbols of the metric $\G$. As the metric is the Hessian of $\F$ in affine coordinates, the Christoffel symbols read:
\begin{equation}        \label{eq:gchristsymb}
    \Gamma^k_{ab} = \frac{1}{2} \G^{kl} \F_{abl} .
\end{equation}
In particular, the difference tensor $\gamma$ is dual to the symmetric cubic form
\begin{equation*}
    \Xi = \frac{1}{2} \mathscr{F}_{abc} dx^a dx^b dx^c .
\end{equation*}
The cubic form $\Xi$ is often called the \emph{Yukawa coupling} of $\M$ \cite{grigorian2009local,karigiannis2009hodge,lee2009geometric}. The covariant derivative of the Yukawa coupling is given by:
\begin{align}   \label{eq:covderyuk}
    \begin{split}
        \nabla^\G_d \Xi_{abc} & = \partial_d \Xi_{abc} - \Gamma^{k}_{da} \Xi_{kbc} - \Gamma^{k}_{db} \Xi_{akc} - \Gamma^{k}_{dc} \Xi_{abk} \\
        & = \frac{1}{2} \F_{abcd} - \frac 1 4 \G^{kl} \left( \F_{abk} \F_{cdl} + \F_{ack}\F_{bdl} + \F_{adk} \F_{bcl} \right) .
    \end{split}
\end{align}
Hence, Theorem \ref{thm:fabcd} implies that:
\begin{equation}        \label{eq:nablaxiabcd}
    \nabla^\G_d \Xi_{abc} = \frac{1}{2} ( \mathscr{E}_{abcd} + \mathscr{E}_{cabd} + \mathscr{E}_{cbad} ) .
\end{equation}
Therefore, $\mathscr{E}_{abcd} + \mathscr{E}_{cabd} + \mathscr{E}_{cbad} = 0$ at a point for any $a,b,c,d$ if and only if the covariant derivative (with respect to the Levi-Civita connection of $\G$) of the Yukawa coupling $\Xi$, or equivalently of the difference tensor $\gamma$, vanishes at this point. For later use, we gather a few properties of the Yukawa coupling and its covariant derivative:

\begin{lem}     \label{lem:propyukawa}
    The Yukawa coupling satisfies the following properties:
    \begin{enumerate}[(i)]
        \item Under the identification $\M \simeq \R \times \M_1$, $\Xi = - dt \otimes \G + \Xi_1$ where $\Xi_1$ is the restriction of $\Xi$ to $\M_1$.
        \item In local affine coordinates, $x^k \mathscr{F}_{abk} = - 2 \G_{ab}$.
        \item $\nabla^\G\Xi$ is a fully symmetric quartic form on $T\M$.
        \item In local affine coordinates, $x^k \nabla^\G_a \Xi_{bck} =  0$. 
    \end{enumerate}
\end{lem}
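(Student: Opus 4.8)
The plan is to establish the four assertions of Lemma \ref{lem:propyukawa} in order, leveraging the fact that $\G$ is the Hessian of $\F$ in affine coordinates together with the already-established identity $x^k \F_{ak} = -\F_a$ from Remark \ref{rem:idgf}.

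For (i), I would use the isometric splitting $(\M,\G) \cong \R \times (\M_1,\G_1)$ with $\G = 7\,dt^2 + \G_1$, where the diffeomorphism sends $(t,\varphi\D)$ to $e^t\varphi\D$. Since $\F(e^t\varphi\D) = -3\log\Vol(e^t\varphi) = -3\log(e^{7t}\Vol(\varphi)) = -21 t - 3\log\Vol(\varphi)$, the potential decomposes as $\F = -21 t + \F_1$ where $\F_1$ is (the pullback of) the potential on $\M_1$. Hence $\Xi = \frac12 \nabla^2\F$ (the second covariant derivative with respect to the flat connection $\nabla$, symmetrised to a cubic form via the third derivative) has a $dt$-component coming only from differentiating the $\G_1$-part once in the radial direction; concretely, $\partial_t \G_1$ is computed along the family $e^t\varphi$, and one checks $\partial_t$ acting on $\G_{ab}=\F_{ab}$ produces $-2\G_{ab}$ in suitable coordinates, giving the claimed $\Xi = -dt\otimes\G + \Xi_1$. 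I expect this to be the step requiring the most care, because one must be precise about how the cubic form $\Xi$ interacts with the product coordinates $(t,x^1,\dots,x^n)$ versus the affine coordinates $(x^0,\dots,x^n)$ on $\M$; the cleanest route is to verify the identity $x^k\F_{abk} = -2\G_{ab}$ first (item (ii)) and then read off (i) from the fact that the radial vector field $x^k\partial_k$ corresponds to $\partial_t$ up to the splitting.

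For (ii), I would differentiate the identity $x^k \F_{ak} = -\F_a$ of Remark \ref{rem:idgf} with respect to $x^b$: the left side gives $\F_{ab} + x^k \F_{abk}$ (since $\partial_b x^k = \delta^k_b$ and $\F_{bk}=\G_{bk}$), the right side gives $-\F_{ab} = -\G_{ab}$, so $x^k\F_{abk} = -\G_{ab} - \G_{ab} = -2\G_{ab}$, as claimed. For (iii), full symmetry of $\nabla^\G\Xi$ in all four indices: $\Xi_{abc}=\frac12\F_{abc}$ is already totally symmetric, and the formula $\nabla^\G_d\Xi_{abc} = \frac12\F_{abcd} - \frac14\G^{kl}(\F_{abk}\F_{cdl}+\F_{ack}\F_{bdl}+\F_{adk}\F_{bcl})$ displayed above is manifestly symmetric in $a,b,c$ (as $\F_{abcd}$ is) and the second term is visibly symmetric under the full $S_4$ action on $\{a,b,c,d\}$; so it remains to observe that $\frac12\F_{abcd}$ together with symmetry in $a,b,c$ and the known symmetry of the correction term forces $d$ to be on equal footing, i.e. $\nabla^\G_d\Xi_{abc}$ is $S_4$-invariant. (Alternatively, invoke the standard fact that for a Hessian metric the tensor $\nabla^\G(\nabla^2\F)$ is totally symmetric, which is precisely the Codazzi-type equation for Hessian structures.) Finally for (iv), contract the expression for $\nabla^\G_d\Xi_{bck}$ with $x^k$ and use (ii) repeatedly: $x^k\F_{bck} = -2\G_{bc}$ and $x^k\F_{ijk} = -2\G_{ij}$ for each of the three terms in the correction, while $x^k\G_{kl} = x^k\F_{kl} = -\F_l$ by Remark \ref{rem:idgf}; assembling these, $x^k\nabla^\G_d\Xi_{bck} = \frac12 x^k\F_{bcdk} - \frac14\G^{kl}\big(x^k(\cdots)\big)$, and one checks the radial derivative of the $S_4$-symmetric correction cancels $\frac12 x^k\F_{bcdk}$ term by term using $\partial_d(x^k\F_{bck}) = \partial_d(-2\G_{bc})$, i.e. $\F_{bcd} + x^k\F_{bcdk} = -2\F_{bcd}$, hence $x^k\F_{bcdk} = -3\F_{bcd}$, and similarly $x^k(\F_{bck}\F_{dl} + \F_{bk}\F_{cdl} + \ldots)$ telescopes so that the whole contraction vanishes.

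The main obstacle is item (i): unwinding the relationship between the intrinsic cubic form $\Xi$ on $\M$, its restriction to $\M_1$, and the $dt$-factor in the product splitting, without getting lost in the bookkeeping of which coordinate system is in play. I would handle this by first proving (ii) — a one-line consequence of Remark \ref{rem:idgf} — and then using the Euler-type relations $x^k\F_{abk}=-2\G_{ab}$ and $x^k\F_{abck}=-3\F_{abc}$ to identify the radial vector field $E = x^k\partial_k$ with (a constant multiple of) $\partial_t$ under the splitting, at which point $\iota_E\Xi = -\G$ (up to the normalisation fixed by $\G(E,E) = x^ax^b\F_{ab} = \ldots$, computable from Remark \ref{rem:idgf} as $-x^a\F_a = 3\Vol\cdot(\text{const})$, but in the normalised picture $\G(\partial_t,\partial_t)=7$), and the decomposition $\Xi = -dt\otimes\G + \Xi_1$ follows by splitting $\Xi$ into its $E$-contraction and its restriction to the orthogonal complement $T\M_1$.
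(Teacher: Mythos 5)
Your proposal is correct, and for items (i) and (iii) it follows essentially the same route as the paper: (i) is read off from (ii) by identifying the radial field $x^k \del_k$ with $\del_t$ under the splitting $\M \simeq \R \times \M_1$, and (iii) is the manifest $S_4$-symmetry of the displayed formula for $\nabla^\G_d \Xi_{abc}$. Where you genuinely diverge is in (ii) and (iv). For (ii) the paper goes back to the integral formula of Proposition \ref{prop:fabc}, writing $x^k \F_{abk} = -\tfrac{2}{\Vol(\varphi)}\int \langle h_a \cdot \eta_b, \varphi\rangle_\varphi \mu_\varphi$ and using $h_a \cdot \varphi = \eta_a$ together with the self-adjointness of $h_a$; you instead differentiate the Euler-type identity $x^k\F_{ak} = -\F_a$ of Remark \ref{rem:idgf} with respect to $x^b$, which is a shorter, purely formal argument relying only on homogeneity. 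For (iv) the paper invokes Theorem \ref{thm:fabcd}, writing $\nabla^\G_a\Xi_{bck}$ as a sum of the terms $\mathscr{E}$ and observing that each contraction $x^k\mathscr{E}_{\cdot\cdot\cdot k}$ vanishes because $x^k h_k \cdot \eta = \eta$ and $h_a \cdot \varphi = \eta_a$ are harmonic, so the operator $d^*d - dd^*$ annihilates them; you instead contract the explicit formula for $\nabla^\G_d\Xi_{abc}$ using $x^k\F_{abk} = -2\G_{ab}$ and its derivative $x^k\F_{abck} = -3\F_{abc}$, and the cancellation $-\tfrac32\F_{abc} + \tfrac32\F_{abc} = 0$ indeed closes up (I checked; your ``telescoping'' remark is vaguer than it needs to be, but the identities you list suffice). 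Your route is more elementary and self-contained at the level of Hessian-geometry identities, at the cost of not exposing the geometric reason (harmonicity of the contracted forms) that the paper's argument makes transparent.

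One minor slip that does not affect the argument: since $g_{\lambda\varphi} = \lambda^{2/3} g_\varphi$ one has $\Vol(e^t\varphi) = e^{7t/3}\Vol(\varphi)$, so $\F = -7t + \F_1$ rather than $-21t + \F_1$; this is consistent with $\G(\del_t,\del_t) = x^a x^b \F_{ab} = -x^a\F_a = 7$, and your actual derivation of (i) via (ii) is unaffected.
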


\begin{proof}
    Properties (i) and (ii) are essentially equivalent since $\F_{abc} = 2 \Xi_{abc}$. Moreover (ii) can be seen from the observation that $x^k$ are the coordinates of the cohomology class $[\varphi] \in H^3(M)$, and thus
    \begin{equation*}
        x^k \F_{abk} = - \frac{2}{\Vol(\varphi)} \int \langle h_a \cdot \eta_b , \varphi \rangle_\varphi \mu_\varphi = - \frac{2}{\Vol(\varphi)} \int \langle \eta_a, \eta_b \rangle_\varphi \mu_\varphi = -2 \G_{ab}
    \end{equation*}
    using the symmetry of $h_a$ and the fact that $h_a \cdot \varphi = \eta_a$ by definition.

    For point (iii), the symmetry of $\nabla^\G \Xi$ follows from the symmetry of the partial derivatives of $\F$ and \eqref{eq:covderyuk}. Finally, because point (iv) will be a key argument in the proof of Theorem \ref{thm:totgeod} below, we shall give it two proofs.
    
    The first proof follows from the observation that
    \begin{equation}    \label{eq:xkeabcd}
        x^k \mathscr{E}_{kbcd} = x^k \mathscr{E}_{akcd} = x^k \mathscr{E}_{abkd} = x^k \mathscr{E}_{abck} = 0
    \end{equation}
    for any $a,b,c,d$. Indeed, from the expression given in Theorem \ref{thm:fabcd}, we have
    \begin{equation*}
        x^k \mathscr{E}_{kbcd} = \int \langle G_\Delta((d^*d-dd^*) h_d \cdot \eta_c), (x^k h_k \cdot \eta_b) \rangle_\varphi \mu_\varphi .
    \end{equation*}
    Notice that $x^kh_k$ is a self-adjoint endomorphism of $TM$ for the metric $g_\varphi$, and moreover $x^k h_k \cdot \varphi = x^k \eta_k = \varphi$. It follows that $x^k h_k = \frac{1}{3} \Id$. Hence $x^k h_k \cdot \eta_b = \frac{1}{3} \Id \cdot \eta_b = \eta_b$ is harmonic, whence it is $L^2$-orthogonal to $G_\Delta((d^*d-dd^*) h_d \cdot \eta_c)$ and therefore $x^k \mathscr{E}_{kbcd} = 0$. The other identities of \eqref{eq:xkeabcd} are proved in the same way, since $x^k h_a \cdot \eta_k = h_a \cdot \varphi = \eta_a$, $x^k h_d \cdot \eta_k = \eta_d$ and $x^k h_k \cdot \eta_c = \eta_c$ are all harmonic forms. Since $a,b,c,d$ are arbitrary, point (iv) now follows from \eqref{eq:nablaxiabcd}.

    We can also give a second proof which does not rely on the particular expression of $\mathscr{E}_{abcd}$ given in Theorem \ref{thm:fabcd} but only on the properties of the potential $\F$. The idea is to differentiate the expression of point (ii) with respect to the variable $x^c$, which yields the identity $x^k \F_{abck} + \F_{abc} = - 2 \F_{abc}$, that is:
    \begin{equation}        \label{eq:xkff}
        x^k \F_{abck} = - 3 \F_{abc} .
    \end{equation}
    On the other hand, using \eqref{eq:covderyuk} we have
    \begin{equation*}
        x^k \nabla^\G_a \Xi_{bck} = \frac{1}{2} x^k \F_{abck} - \frac{1}{4} \G^{rs}\left( \F_{abr} \cdot  x^k\F_{cks} + \F_{acr} \cdot x^k\F_{bks} + x^k\F_{akr}\cdot \F_{bcs} \right)
    \end{equation*}
    and using point (ii) again we see that
    \begin{equation*}
        \G^{rs} x^k\F_{cks} = - 2 \G^{rs} \G_{cs} = - 2 \delta^r_c, ~~ \G^{rs} x^k\F_{bks} = - 2 \delta^r_b, ~~ \G^{rs}x^k\F_{akr} = -2 \delta^r_a .
    \end{equation*}
    Substituting this into the previous expression, we obtain
    \begin{equation*}
        x^k \nabla^\G_a \Xi_{bck} = \frac{1}{2} (x^k \F_{abck} + 3 \F_{abc}) = 0
    \end{equation*}
    because of \eqref{eq:xkff}.
\end{proof}

It is interesting to relate the previous observations to the curvature of $\G$. By convention, we define the Riemann curvature tensor of $\mathscr \G$ as
\begin{equation*}
    \mathscr R \left( \frac{\partial}{\partial x^c}, \frac{\partial}{\partial x^d} \right) \frac{\partial}{\partial x^b} = {\mathscr R^a}_{bcd} \frac{\partial}{\partial x^a} = \nabla^\G_{\partial_c} \nabla^\G_{\partial_d} \frac{\partial}{\partial x^b} - \nabla^\G_{\partial_d} \nabla^\G_{\partial_c} \frac{\partial}{\partial x^b} \cdot
\end{equation*}
Lowering the first index, we also write
\begin{equation*}
    \mathscr R_{abcd} = \G_{ak} {\mathscr R^k}_{bcd} .
\end{equation*}
For Hessian metrics, the Riemann curvature tensor has a particularly simple expression \cite[Prop. 2.3]{shima2007geometry}:
\begin{equation}
    \mathscr R_{abcd} = \frac{1}{4} \G^{kl} (\F_{adk} \F_{bcl} - \F_{ack}\F_{bdl}) = \G^{kl} \Xi_{adk} \Xi_{bcl} - \G^{kl}\Xi_{ack}\Xi_{bdl} .
\end{equation}
Since the Yukawa coupling determines the curvature, we deduce the following:

\begin{prop}        \label{prop:eabcdvanish}
    If $\mathscr{E}_{abcd} + \mathscr{E}_{cabd} + \mathscr{E}_{cbad} = 0$ for any $0 \leq a,b,c,d \leq n$ at a point of the moduli space, then the covariant derivative of $\mathscr R$ vanishes at this point. In particular, if $\mathscr{E}_{abcd} + \mathscr{E}_{cabd} + \mathscr{E}_{cbad}$ vanishes identically on the moduli space, then $(\M,\G)$ is locally symmetric.
\end{prop}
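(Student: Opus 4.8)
The plan is to recognise that the hypothesis says precisely that the Yukawa coupling $\Xi$ (equivalently the difference tensor $\gamma$) is parallel at the point in question, and then to exploit the fact that for a Hessian metric the Riemann tensor is an \emph{algebraic} expression in $\G$, $\G^{-1}$ and $\Xi$, so that its covariant derivative is controlled by that of $\Xi$.

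I would first recall the identity obtained just after Theorem \ref{thm:fabcd}, namely
\[
\nabla^\G_d \Xi_{abc} = \tfrac{1}{2}\bigl( \mathscr E_{abcd} + \mathscr E_{cabd} + \mathscr E_{cbad} \bigr),
\]
which holds in any affine chart. Since $\nabla^\G\Xi$ is a genuine tensor on $\M$, this shows that the hypothesis — the vanishing of $\mathscr E_{abcd} + \mathscr E_{cabd} + \mathscr E_{cbad}$ for all $a,b,c,d$ at a point $p$ — is equivalent to $(\nabla^\G\Xi)(p) = 0$, and likewise the global hypothesis is equivalent to $\nabla^\G\Xi \equiv 0$. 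Because $\gamma$ is obtained from $\Xi$ by raising an index with the parallel metric $\G$, these are also equivalent to $\nabla^\G\gamma$ vanishing at $p$, respectively everywhere.

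Next I would invoke the curvature formula $\mathscr R_{abcd} = \G^{kl}\Xi_{adk}\Xi_{bcl} - \G^{kl}\Xi_{ack}\Xi_{bdl}$ taken from \cite[Prop. 2.3]{shima2007geometry}, which presents the Riemann tensor as a fixed contraction of $\G^{-1}\otimes\Xi\otimes\Xi$. Applying $\nabla^\G$, and using the Leibniz rule together with the fact that covariant differentiation commutes with contraction, $\nabla^\G\mathscr R$ becomes a sum of three terms, in which respectively the factor $\G^{-1}$, the first copy of $\Xi$, or the second copy of $\Xi$ is differentiated. The first term vanishes identically because $\nabla^\G$ is metric, so $\nabla^\G\G = 0$ and hence $\nabla^\G\G^{-1} = 0$; the remaining two terms are contractions linear in $\nabla^\G\Xi$, so they vanish at $p$. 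Thus $(\nabla^\G\mathscr R)(p) = 0$, which is the first assertion. For the second, the global hypothesis gives $\nabla^\G\Xi\equiv 0$, hence $\nabla^\G\mathscr R\equiv 0$, and by the classical characterisation of locally symmetric spaces as the Riemannian manifolds with parallel curvature tensor, $(\M,\G)$ is locally symmetric.

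I do not anticipate a serious obstacle here: once Theorem \ref{thm:fabcd} is in hand, the argument is formal. The only point meriting a little care is that, since $\nabla^\G\Xi$ is assumed to vanish only at the single point $p$ and not on a neighbourhood, it is essential that the Leibniz expansion of $\nabla^\G\mathscr R$ contain no term involving a derivative of $\G^{-1}$ or a second covariant derivative of $\Xi$ — which is exactly what the product and contraction rules guarantee, given that $\mathscr R$ is a fixed algebraic contraction of $\G^{-1}\otimes\Xi\otimes\Xi$.
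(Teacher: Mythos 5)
Your proposal is correct and follows essentially the same route as the paper: the identity $\nabla^\G_d \Xi_{abc} = \tfrac{1}{2}(\mathscr{E}_{abcd} + \mathscr{E}_{cabd} + \mathscr{E}_{cbad})$ established right after Theorem \ref{thm:fabcd} shows the hypothesis is equivalent to $\nabla^\G\Xi$ vanishing at the point, and Shima's formula $\mathscr R_{abcd} = \G^{kl}\Xi_{adk}\Xi_{bcl} - \G^{kl}\Xi_{ack}\Xi_{bdl}$ together with the Leibniz rule and $\nabla^\G\G = 0$ then gives $\nabla^\G\mathscr R = 0$ there. Your remark that only first derivatives of $\Xi$ enter, so pointwise vanishing suffices, is exactly the point the paper leaves implicit.
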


A simple case where this condition is satisfied, and a good sanity check for the formula of Theorem \ref{thm:fabcd}, is when $M = T^7 / \Gamma$ is the quotient of a flat torus by a group of affine isometries, which we assume to be chosen such that $b^1(M) = 0$. In that case the moduli space of torsion-free $G_2$-structures on $M$ can be identified with the space $(\Lambda^3_+ \R^*_7)^\Gamma$ of $\Gamma$-invariant positive forms on $\R^7$, and the associated metrics are induced by $\Gamma$-invariant inner products. In particular, the space of harmonic forms does not change along deformations of the flat metric and therefore Lemma \ref{lem:hetadecomp} shows that $h_a \cdot \eta_b$ is harmonic which implies that all the terms $\mathscr{E}_{abcd}$ vanish in Theorem \ref{thm:fabcd}. Hence Proposition \ref{prop:eabcdvanish} implies that the moduli space is locally symmetric. This fact can be independently proved by examining the geometry of the space of positive forms $\Lambda^3_+\R^*_7$. This is a homogeneous space diffeomorphic to $GL_+(7) / G_2$, and the map $g : \Lambda^3_+\R^*_7 \rightarrow S^2_+\R^*_7$ is a homogeneous fibration (where $S^2_+\R^*_7$ is the space of inner products on $\R^7$). It is easy to see that we can endow $\Lambda^3_+\R^*_7$ and $S^2_+\R^*_7$ with homogeneous metrics in such a way that $g$ is a Riemannian fibration, and if we identify $\Lambda^3_+\R^*_7$ with the moduli space of torsion-free $G_2$-structures on $T^7$ then the metric $\G$ coincides with the homogeneous metric. Moreover $S^2_+\R^*_7$ is a symmetric space, and the vertical space of the fibration at $\varphi \in \Lambda^3_+\R^*_7$ is $\Lambda^3_{7,\varphi}$. Now if $\Gamma$ is a finite subgroup of $G_2$ which does not fix any nonzero vector in $\R^7$, then the induced map $g : (\Lambda^3_+\R^*_7)^\Gamma \rightarrow (S^2_+\R^*_7)^\Gamma$ is a covering map since $(\Lambda^3_{7,\varphi})^\Gamma \simeq (\R^7)^\Gamma = 0$ for any $\varphi \in (\Lambda^3_+\R^*_7)^\Gamma$. Hence each connected component of $(\Lambda^3_+\R^*_7)^\Gamma$ must be isometric to $(S^2_+\R^*_7)^\Gamma$, which is a totally geodesic subspace of $S^2_+\R^*_7$ and hence also itself a symmetric space.

In the next part, we will see that the Yukawa coupling is also parallel when $M = (T^3 \times K3)/ \Gamma$. In that case, the situation is more complicated since the space of harmonic forms does vary along deformations of the torsion-free $G_2$-structure, and we have to prove that the contribution of the co-exact part of the $3$-forms $h_a \cdot \eta_b$ to $\mathscr{E}_{abcd}$ compensates the contribution of their exact part. As in the case of $T^7/\Gamma$, the fact that the $G_2$-moduli space is locally symmetric when $M = (T^3 \times K3) / \Gamma$ (with our usual assumption that $b^1(M) = 0$) can be proved in a more direct way and is related to the fact that the moduli space of hyperk\"ahler metrics on the K3 surface is locally symmetric (see the author's PhD thesis for more details \cite[Ch. 6]{langlais2025phd}).

Beyond these cases, there is no reason to think that the Yukawa coupling will be a parallel tensor, because the constraints it imposes on $\G$ are too strong. Therefore, much of the difficulty in further analysing the geometric properties of the moduli spaces lies in the fact that the terms $\mathscr{E}_{abcd}$ cannot be computed more explicitly in local coordinates. In Section \ref{section:periods}, we will propose a more geometric interpretation for the presence of these terms, and prove a stronger version of Proposition \ref{prop:eabcdvanish} which shows that if they vanish then the sectional curvature of $\mathscr{G}$ is nonpositive. An interesting question to ask is whether there is always an upper bound on the curvatures of the moduli space, in relation with similar conjectures about the geometry of K\"ahler cones \cite{wilson2004sectional}. This is currently being investigated by Karigiannis and Loftin \cite{karigiannis2025octonionic}.

    \subsection{Further computations for $(T^3 \times K3)/\Gamma$}      \label{subsection:t3k3}

Let $T^3 = \R^3 / \Lambda$ for some lattice $\Lambda \subset \R^3$, and $X$ be the smooth $4$-manifold underlying K3 surfaces. If $\underline \omega = (\omega_1,\omega_2,\omega_3)$ is a hyperk\"ahler triple on $X$ with associated hyperk\"ahler metric $g_{\underline \omega}$ and $(\theta_1,\theta_2,\theta_3)$ are linear coordinates on $\R^3$, then
\begin{equation}
    \varphi_{\underline \omega} = d\theta_1 \wedge d\theta_2 \wedge d\theta_3 - d\theta_1 \wedge \omega_1 - d\theta_2 \wedge \omega_2 - d\theta_3 \wedge \omega_3 
\end{equation}
is a torsion-free $G_2$-structure on $T^3 \times X$, with associated metric
\begin{equation*}
    g_{\varphi_{\underline \omega}} = d\theta_1^2 + d\theta_2^2 + d\theta_3^2 + g_{\underline \omega} .
\end{equation*}
The space of harmonic $3$-forms on $T^3 \times X$ decomposes as:
\begin{equation*}
    \mathscr H^3(T^3 \times X, \varphi_{\underline \omega}) = \Lambda^3 \R^*_3 \oplus (\R^*_3 \otimes \mathscr H^+_{\underline \omega}(X)) \oplus (\R^*_3 \otimes \mathscr H^-_{\underline \omega}(X))
\end{equation*}
where $\mathscr H^\pm_{\underline \omega}(X)$ are the spaces of harmonic (anti-)self-dual $2$-forms on $(X,\underline \omega)$ and $\R^*_3$ is the dual space of $\R^3$. Using this decomposition, we can describe the deformations of $\varphi_{\underline \omega}$ by analysing separately each component.
\begin{itemize}
    \item The first component $\Lambda^3 \R^*_3$ is spanned by $d\theta_1 \wedge d\theta_2 \wedge d\theta_3$. Deforming of $\varphi_{\underline \omega}$ along this direction corresponds to rescaling the inner product on $T^3$ by some factor $\lambda > 0$, together with a rescaling of the hyperk\"{a}hler triple $\underline \omega$ by a factor $\lambda^{-\frac{1}{2}}$.

    \item $\R^*_3 \otimes \mathscr H^+_{\underline \omega}(X)$ has dimension $9$ and contains $\mathscr H^3_{1}(T^3 \times X, \varphi_{\underline \omega})$ as a $3$-dimensional subspace spanned by $d\theta_j \wedge \omega_i - d\theta_i \wedge \omega_j = \frac{\partial}{\partial \theta_k} \lrcorner \Theta(\varphi_{\underline{\omega}})$ for cyclic permutations $(ijk)$ of $\{1,2,3\}$, corresponding to the isometric deformations of the $G_2$-structure $\varphi_{\underline \omega}$. Its orthogonal complement has dimension $6$, and decomposes as the direct sum of the $5$-dimensional space $\{\sum a_{ij} d\theta_i \wedge \omega_j, a_{ij} = a_{ji}, \sum a_{ii} = 0 \}$ corresponding to the infinitesimal deformations of the inner product on $T^3$ with fixed volume element, and a $1$-dimensional space spanned by $d \theta_1 \wedge \omega_1 + d\theta_2 \wedge \omega_2 + d\theta_3 \wedge \omega_3$ corresponding to an infinitesimal rescaling of the hyperk\"{a}hler triple.

    \item The third component $\R^*_3 \otimes \mathscr H^-_{\underline \omega}(X)$ corresponds to the deformations of the hyperk\"{a}hler metric $g_{\underline \omega}$ on $X$ with fixed volume, where the inner product on $\R^3$ is fixed.
\end{itemize}
Now assume that $\Gamma$ is a finite group acting by isometries on $T^3 \times X$, preserving $\varphi_{\underline \omega}$, and such that the quotient $M = (T^3 \times X) / \Gamma$ has $b^1(M) = 0$. We denote by $\varphi$ the torsion-free $G_2$-structure induced by $\varphi_{\underline{\omega}}$ on $M$. As the isometry group of $(T^3 \times X,g_{\varphi_{\underline \omega}})$ is isomorphic to the product of the isometry groups of $(T^3,d\theta^2_1 + d\theta_2^2+d\theta_3^2)$ and $(X,g_{\underline \omega})$, $\Gamma$ preserves the above decomposition of $\mathscr H^3(T^3 \times X, \varphi_{\underline \omega})$. The quotient map $\pi : T^3 \times X \rightarrow M$ induces an identification $\mathscr H^3(M, \varphi) \simeq \mathscr H^3(T^3 \times X, \varphi_{\underline \omega})^\Gamma$, and hence we obtain a decomposition:
\begin{equation}        \label{eq:decomph3gamma}
    \mathscr H^3(M, \varphi) = \Lambda^3 \R^*_3 \oplus  \left( \R^*_3 \otimes \mathscr H^+_{\underline \omega_0}(X) \right)^\Gamma \oplus \left( \R^*_3 \otimes \mathscr H^-_{\underline \omega_0}(X) \right)^\Gamma .
\end{equation}
Note that as $\varphi_{\underline \omega}$ is fixed by $\Gamma$, $d\theta_1 \wedge d\theta_2 \wedge d\theta_3$ must also be fixed by $\Gamma$. This decomposition induces a splitting $T \M = T^0 \M \oplus T^+\M \oplus T^-\M$ of the tangent bundle of $\M$. Using this splitting and Theorem \ref{thm:fabcd}, we can prove:

\begin{prop}
    Let $(M,\varphi)$ be a compact $G_2$-manifold with $b^1(M) = 0$ whose universal cover is $\R^3 \times K3$. Then the Yukawa coupling on $\M$ is parallel for the Levi-Civita connection of $\G$, and hence $(\M,\G)$ is locally symmetric.
\end{prop}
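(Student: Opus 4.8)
The plan is to invoke Proposition~\ref{prop:eabcdvanish}: it is enough to show that $\mathscr{E}_{abcd}+\mathscr{E}_{cabd}+\mathscr{E}_{cbad}$ vanishes at every point of $\M$, and I expect to prove the stronger statement that \emph{each} $\mathscr{E}_{abcd}$ vanishes. First I would rewrite the correction term. For any form $\zeta$ one has $G_\Delta\bigl((d^*d-dd^*)\zeta\bigr)=\zeta_{\mathrm{co}}-\zeta_{\mathrm{ex}}$, where $\zeta_{\mathrm{ex}},\zeta_{\mathrm{co}}$ are the exact and co-exact parts of $\zeta$; since exact forms are $L^2$-orthogonal to harmonic and to co-exact forms (and vice versa), the formula of Theorem~\ref{thm:fabcd} becomes
\begin{equation*}
    \mathscr{E}_{abcd}(\varphi\D)=\frac{2}{\Vol(\varphi)}\Bigl(\langle (h_d\cdot\eta_c)_{\mathrm{co}},\,(h_a\cdot\eta_b)_{\mathrm{co}}\rangle_{L^2}-\langle (h_d\cdot\eta_c)_{\mathrm{ex}},\,(h_a\cdot\eta_b)_{\mathrm{ex}}\rangle_{L^2}\Bigr).
\end{equation*}
As the harmonic forms, the endomorphisms $h_a$, the operator $G_\Delta$ and the Hodge decomposition all commute with pull-back along the finite covering $T^3\times X\to M$, and the $L^2$-pairings and the volume rescale by the same factor, this quantity may be computed on $T^3\times X$ with the product structure $\varphi_{\underline\omega}=d\theta_{123}+\sum_i d\theta_i\wedge\omega_i$, for $\Gamma$-invariant tangent vectors, that is those appearing in the decomposition~\eqref{eq:decomph3gamma}.

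Next I would use the splitting $T\M=T^0\M\oplus T^+\M\oplus T^-\M$ together with the fact that the self-dual harmonic $2$-forms $\mathscr{H}^+_{\underline\omega}(X)$ are \emph{parallel} (they span the hyperkähler triple $\omega_1,\omega_2,\omega_3$). Unwinding the endomorphism attached to a tangent vector---a multiple of the identity on each factor for $T^0\M$; a trace-free deformation of the flat metric on $T^3$ together with a rescaling of the hyperkähler triple for $T^+\M$; a section of $S^2_0(TX)$ for $T^-\M$---one verifies that $h\cdot\eta$ is again harmonic \emph{unless} both the tangent vector giving rise to $h$ and the tangent vector $\eta$ lie in $T^-\M$: wedging or contracting with the parallel forms $\omega_k$, and rescaling, preserve harmonicity. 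Since $\mathscr{E}_{abcd}$ only involves $h_d\cdot\eta_c$ and $h_a\cdot\eta_b$, both of which must be non-harmonic for a non-zero contribution, this gives $\mathscr{E}_{abcd}=0$ unless $\eta_a,\eta_b,\eta_c,\eta_d$ all lie in $T^-\M$.

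It then remains to treat the case $\eta_a,\eta_b,\eta_c,\eta_d\in T^-\M$. Each such form reads $\eta=\sum_i d\theta_i\wedge\gamma_i$ with $\gamma_i\in\mathscr{H}^-_{\underline\omega}(X)$, its associated endomorphism being the unique $H\in S^2_0(TX)$ with $H\cdot\omega_i=\gamma_i$ for $i=1,2,3$; writing $\gamma_i^c,\gamma_i^b$ and $H^d,H^a$ for the data of $\eta_c,\eta_b,\eta_d,\eta_a$, we have $h_d\cdot\eta_c=\sum_i d\theta_i\wedge\psi_i$ with $\psi_i:=H^d\cdot\gamma_i^c$. As $S^2_0(TX)$ maps anti-self-dual $2$-forms to self-dual ones in dimension $4$, each $\psi_i$ is pointwise self-dual on $X$. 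Because $d\theta_i$ is parallel and $g_{\varphi_{\underline\omega}}$ is a product metric, the Hodge decomposition on $T^3\times X$ respects the Künneth grading, so $(h_d\cdot\eta_c)_{\mathrm{ex}}=\sum_i d\theta_i\wedge\psi_i^{\mathrm{ex}}$ and $(h_d\cdot\eta_c)_{\mathrm{co}}=\sum_i d\theta_i\wedge\psi_i^{\mathrm{co}}$, where $\psi_i^{\mathrm{ex}},\psi_i^{\mathrm{co}}$ are the exact and co-exact parts of $\psi_i$ \emph{on $X$}; and self-duality forces $\psi_i^{\mathrm{co}}=*_X\psi_i^{\mathrm{ex}}$. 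Writing likewise $\chi_j:=H^a\cdot\gamma_j^b$ and using that $*_X$ is an $L^2$-isometry on $X$, one finds
\begin{equation*}
    \langle (h_d\cdot\eta_c)_{\mathrm{co}},\,(h_a\cdot\eta_b)_{\mathrm{co}}\rangle_{L^2(T^3\times X)}=\Vol(T^3)\sum_{i,j}\langle d\theta_i,d\theta_j\rangle\,\langle\psi_i^{\mathrm{ex}},\chi_j^{\mathrm{ex}}\rangle_{L^2(X)}=\langle (h_d\cdot\eta_c)_{\mathrm{ex}},\,(h_a\cdot\eta_b)_{\mathrm{ex}}\rangle_{L^2(T^3\times X)},
\end{equation*}
whence $\mathscr{E}_{abcd}=0$. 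Thus all the $\mathscr{E}_{abcd}$ vanish, the Yukawa coupling is parallel for $\nabla^\G$ by the discussion following Theorem~\ref{thm:fabcd}, and $(\M,\G)$ is locally symmetric by Proposition~\ref{prop:eabcdvanish}.

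The bookkeeping in the second paragraph should be routine. I expect the delicate point---where the hyperkähler geometry of $X$ really enters---to be the third paragraph: correctly identifying the endomorphisms coming from $T^-\M$, checking that the $X$-component of $h_d\cdot\eta_c$ is pointwise self-dual, and matching the Hodge decompositions on $T^3\times X$ with those on $X$, so that the co-exact and exact contributions to $\mathscr{E}_{abcd}$ cancel exactly. This cancellation is precisely what is meant by saying that the contribution of the co-exact parts of the forms $h_a\cdot\eta_b$ compensates that of their exact parts.
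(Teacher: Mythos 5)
Your proposal is correct, and its architecture coincides with the paper's: reduce to the vanishing of each $\mathscr{E}_{abcd}$, split the tangent space according to \eqref{eq:decomph3gamma}, show that $h\cdot\eta$ stays harmonic whenever one of the two relevant directions lies in $T^0\M\oplus T^+\M$, and in the remaining all-$T^-$ case show that the exact and co-exact contributions to $\mathscr{E}_{abcd}$ cancel. Where you genuinely diverge is in the mechanism for that last cancellation. The paper argues dynamically: it constructs an explicit one-parameter family $\varphi_{\underline\omega_t}$ deforming only the hyperk\"ahler triple, lifts $\eta_c$ and $*_\varphi\eta_c$ to $T^3\times X$, and uses Lemma \ref{lem:hetadecomp} to identify the exact and co-exact parts of $h_d\cdot\eta_c$ with the $t$-derivatives of the harmonic representatives $\widetilde\eta_t$ and $\widetilde\nu_t$, which are both assembled from the same forms $\frac{\del\alpha_{j,t}}{\del t}$ on $X$, forcing the two $L^2$-pairings to agree. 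You argue statically: after the (correct) rewriting $G_\Delta((d^*d-dd^*)\zeta)=\zeta_{\mathrm{co}}-\zeta_{\mathrm{ex}}$, you observe that the $X$-component $\psi_i=H^d\cdot\gamma^c_i$ is pointwise self-dual because trace-free symmetric endomorphisms interchange $\Lambda^+$ and $\Lambda^-$ in dimension $4$, so $*_X$ exchanges $\psi_i^{\mathrm{ex}}$ and $\psi_i^{\mathrm{co}}$ and, being an $L^2$-isometry, equates the two pairings. The two arguments encode the same underlying fact (the paper's $\frac{\del\alpha_{j,t}}{\del t}$ is exactly twice the exact part of $H^d\cdot\alpha_j$ on $X$, with the co-exact part its $*_X$-image), but yours avoids constructing the deformation family and tracking harmonic representatives, at the price of the pointwise linear-algebra input and a careful matching of the product Hodge decomposition with that of $X$ --- which you rightly flag as the delicate point. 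One small item to tidy in the ``routine bookkeeping'': the endomorphism attached to the rescaling direction $\sum_i d\theta_i\wedge\omega_i$ in $T^+\M$ is $\tfrac12\Id_{TX}$, which is neither trace-free nor a multiple of the full identity, so in adapted coordinates you should first split off its $\Omega^3_1$-component (as the paper does implicitly via Lemma \ref{lem:hetadecomp}) before invoking Corollary \ref{cor:decompgeta}; this does not affect the conclusion that $h\cdot\eta$ remains harmonic for these directions.
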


\begin{proof}
    Let us choose affine coordinates $(x^0,\ldots,x^n)$ near $\varphi \D$ in $\M$ and prove that the extra term $\mathscr{E}_{abcd} + \mathscr{E}_{cabd} + \mathscr{E}_{cbad}$ in Theorem \ref{thm:fabcd} vanishes. Let us write $n = n_+ + n_-$ where $n_{\pm}$ is the dimension of $( \R^*_3 \otimes \mathscr H^\pm_{\underline \omega_0}(X))^\Gamma$. Up to a linear change of coordinates, we can assume that we chose coordinates adapted to the decomposition \eqref{eq:decomph3gamma}, in the sense that the harmonic representative of $\frac{\partial}{\partial x^0} \in H^3(M)$ for the metric $g_\varphi$ lies in $\Lambda^3 \R^*_3$, the harmonic representatives of $\frac{\partial}{\partial x^1}, \ldots, \frac{\partial}{\partial x^{n_+}}$ lie in $(\R^*_3 \otimes \mathscr H^+_{\underline \omega_0}(X))^\Gamma$, and the harmonic representatives of $\frac{\partial}{\partial x^{n_++1}},\ldots,\frac{\partial}{\partial x^n}$ lie in $(\R^*_3 \otimes \mathscr H^-_{\underline \omega_0}(X))^\Gamma$. Note that this can only be imposed at the point $\varphi \D \in \M$, not locally near this point. Throughout the proof our computations will be local (in $M$), and therefore we can lift everything to $T^3 \times X$ using the quotient map $\pi$, where the variations of the space of harmonic forms are easier to understand (using the result of Lemma \ref{lem:hetadecomp}, which does not require the vanishing of the first Betti number but only all the harmonic forms to be of type $\Omega^3_1 \oplus \Omega^3_{27}$, on $T^3 \times X$).

    First we prove that if one of the indices $a,b,c$ or $d$ is between $0$ and $n_+$ then $\mathscr{E}_{abcd}(\varphi \D) = 0$, and similarly for $\mathscr{E}_{cabd}$ and $\mathscr{E}_{cbad}$. Since $\F_{abcd}$ is fully symmetric in its indices, we may assume that $0 \leq d \leq n_+$, and seek to prove that $h_d \cdot \eta$ is harmonic for any $\eta \in \mathscr{H}^3(M,\varphi)$. As a consequence of our discussion of the deformations of $\varphi_{\underline \omega}$ on $T^3 \times X$, there is a deformation $\{\varphi_{\underline{\omega}_t}\}_{t \in (-\epsilon,\epsilon)}$ of $\varphi_{\underline{\omega}}$ on $T^3 \times X$ which consists in a variation of the inner product on $T^3$ combined with a rotation and a dilation of the hyperk\"ahler triple on $X$, and such that $\left. \frac{\partial \varphi_{\underline{\omega}_t}}{\partial t} \right|_{t=0}$ is the lift of $\eta_d$. In particular the space of harmonic forms on $T^3 \times X$ with respect to $g_{\varphi_{\underline{\omega}_t}}$ is fixed along this deformation of $\varphi_{\underline{\omega}}$. Hence Lemma \ref{lem:hetadecomp} implies that the lift of $h_d \cdot \eta$ to $T^3 \times X$ is harmonic whenever $\eta$ is a harmonic form on $M$, and thus $h_d \cdot \eta$ is harmonic on $M$. Hence $\mathscr{E}_{abcd}(\varphi \D) = \mathscr{E}_{cabd}(\varphi \D) = \mathscr{E}_{cbad}(\varphi \D) = 0$.

    Now let us assume that $n_+ +1 \leq a,b,c,d \leq n$. In this case, it is no longer true that $h_d \cdot \eta_c$ is harmonic, but we want to prove that the contribution to $\mathscr{E}_{abcd}$ of its exact part cancels with the contribution of the co-exact part. This time, our discussion of the deformations of torsion-free $G_2$-structures on $T^3 \times X$ implies that there is a deformation $\{\varphi_{\underline{\omega}_t}\}_{t \in (-\epsilon,\epsilon)}$ of $\varphi_{\underline{\omega}}$ such that $\left. \frac{\partial \varphi_{\underline{\omega}_t}}{\partial t} \right|_{t=0} = \pi^* \eta_d$ is the lift of $\eta_d$ and $\varphi_{\underline{\omega}_t}$ can be written
    \begin{equation*}
        \varphi_{\underline{\omega}_t} = d\theta_1 \wedge d\theta_2 \wedge d\theta_3 - \sum_{j=1}^3 d\theta_j \wedge \omega_{j,t}
    \end{equation*}
    where $\underline \omega_t = (\omega_{1,t},\omega_{2,t},\omega_{3,t})$ is a family of hyperk\"ahler triples on $X$. Now let $\eta \in \mathscr{H}^3(M,\varphi)$, representing a vector in $T^-_\varphi \M$. Its lift $\widetilde \eta = \pi^* \eta$ on $T^3 \times X$ can be written
    \begin{equation*}
        \widetilde \eta = d\theta_1 \wedge \alpha_1 + d\theta_2 \wedge \alpha_2 + d\theta_3 \wedge \alpha_3
    \end{equation*}
    where $\alpha_1,\alpha_2,\alpha_3$ are anti-self-dual harmonic $2$-forms on $X,g_{\underline \omega}$. In particular the dual $4$-form of $\widetilde \eta$, which we denote by $\widetilde \nu = *_{\varphi_{\underline \omega}} \widetilde \eta = \pi^*(*_\varphi \eta)$ is
    \begin{equation*}
        \widetilde \nu = - d\theta_2 \wedge d\theta_3 \wedge \alpha_1 - d\theta_3 \wedge d\theta_1 \wedge \alpha_2 - d\theta_1 \wedge d\theta_2 \wedge \alpha_3 .
    \end{equation*}
    If we now denote by $\widetilde \eta_t$ the harmonic representative of $[\widetilde \eta] = \pi^*[\eta] \in H^3(T^3 \times X)$ for the metric $g_{\varphi_{\underline{\omega}_t}}$ and $\widetilde \nu_t$ the harmonic representative of $[\widetilde \nu] = \pi^*[*_\varphi \eta] \in H^4(T^3 \times X)$, we see that
    \begin{align*}
        \widetilde \eta_t & = d\theta_1 \wedge \alpha_{1,t} + d\theta_2 \wedge \alpha_{2,t} + d\theta_3 \wedge \alpha_{3,t} , \\
        \widetilde \nu_t & =  - d\theta_2 \wedge d\theta_3 \wedge \alpha_{1,t} - d\theta_3 \wedge d\theta_1 \wedge \alpha_{2,t} - d\theta_1 \wedge d\theta_2 \wedge \alpha_{3,t}
    \end{align*}
    where $\alpha_{j,t}$ is the harmonic representative of $[\alpha_j] \in H^2(X)$ for the hyperk\"ahler metric associated with $\underline \omega_t$. In particular, the lift of the exact part of $2(h_d \cdot \eta)$ to $T^3 \times X$ is
    \begin{equation*}
        \left. \frac{\partial \widetilde \eta_t}{\partial t} \right|_{t=0} = d\theta_1 \wedge \left. \frac{\partial \alpha_{1,t}}{\partial t} \right|_{t=0} + d\theta_2 \wedge \left. \frac{\alpha_{2,t}}{\partial t} \right|_{t=0} + d\theta_3 \wedge \left. \frac{\alpha_{3,t}}{\partial t} \right|_{t=0}
    \end{equation*}
    and the lift of its co-exact part is
    \begin{equation*}
        \left. \frac{\partial \widetilde \nu_t}{\partial t} \right|_{t=0} = - d\theta_2 \wedge d\theta_3 \wedge \left. \frac{\partial \alpha_{1,t}}{\partial t} \right|_{t=0} - d\theta_3 \wedge d\theta_1 \wedge \left. \frac{\alpha_{2,t}}{\partial t} \right|_{t=0} - d\theta_1 \wedge d\theta_2  \wedge \left. \frac{\alpha_{3,t}}{\partial t} \right|_{t=0} \cdot
    \end{equation*}
    If we now let $\eta = \eta_c$ and describe in a similar way the exact and co-exact parts of $h_a \cdot \eta_b$, we see that the inner product of the exact parts of $h_d \cdot \eta_c$ and $h_a \cdot \eta_b$ is equal to the inner product of their co-exact parts, and thus $\mathscr{E}_{abcd}(\varphi \D) = 0$ (see Remark \ref{rem:exactparts}). Similarly $\mathscr{E}_{cabd}(\varphi \D) = \mathscr{E}_{cbad}(\varphi \D) = 0$, which completes the proof of the proposition.
\end{proof}



    \section{A period mapping}        \label{section:periods}

In this section, we introduce an immersion $\Phi$ of the moduli space $\M$ into the homogeneous space $GL(n+1) / (\{\pm 1\} \times O(n))$, and show that it naturally determines the geometric structures of $\M$. The idea is inspired by the period map introduced by Griffiths on Calabi--Yau moduli spaces \cite{griffiths1968periodsi,griffiths1968periodsii}, and the related notion of Weil--Petersson geometry of Lu and Sun \cite{lu2004weil}.

By means of motivation, let us recall a few facts. If $Y$ is a compact Calabi--Yau threefold, the cohomology group $H^3(Y;\C)$ admits a Hodge decomposition 
\begin{equation*}
    H^3(Y;\C) = H^{3,0} \oplus H^{2,1} \oplus H^{1,2} \oplus H^{0,3} .
\end{equation*}
The cup-product induces a symplectic structure $Q$ on $H^3(Y;\C)$ and the Hodge decomposition is subject to the following conditions:
\begin{enumerate}
    \item[\textbf{(A)}] $\overline{H^{p,3-p}} = H^{3-p,p}$, for all $p = 0,1,2,3$.

    \item[\textbf{(B)}] $iQ(H^{p,3-p},\overline{H^{q,3-q}}) = 0$ if $p \neq q$, and $(-1)^{p+1}iQ(H^{p,3-p},\overline{H^{p,3-p}}) > 0$ for all $p$.

    \item[\textbf{(C)}] $\dim H^{3,0} = 1$.
\end{enumerate}
By considering the Hodge filtration $F^p = H^{3,0} \oplus \cdots \oplus H^{3-p,p}$, it can be shown that the domain parametrising  such decompositions (called Hodge structures of weight $(1,h^{2,1})$) is a complex homogeneous space diffeomorphic to $Sp(Q) / (U(1) \times U(n))$, where $Sp(Q)$ is the group of real endomorphisms of $H$ preserving the symplectic form $Q$ and $n = h^{2,1}(Y)$. Griffiths proved that the Hodge filtration varies holomorphically along an analytic deformation of the complex structure of $Y$, and that these variations satisfy the \emph{transversality condition} $dF^p \subset F^{p-1}$ \cite{griffiths1968periodsi,griffiths1968periodsii}. This condition in particular implies that the Weil--Peterson metric can be seen as the pull-back of a homogeneous \emph{indefinite} hermitian form defined on the period domain \cite{tian1987smoothness,todorov1989weil,lu2004weil}. 

This section is organised as follows. The map $\Phi$ is defined in \S\ref{subsection:firstdef}, where we also describe the structure of the target domains. In \S\ref{subsection:infinitesimal}, we describe the properties of $\Phi$ and its relation to the metric $\G$. In \S\ref{subsection:condition}, we show that the extra terms in Theorem \ref{thm:fabcd} essentially correspond to the second fundamental form of $\Phi$. Finally, in \S\ref{subsection:comments} we relate the map $\Phi$ to the usual notion of period map for $G_2$-manifolds, as a Lagrangian immersion of $\M$ into $H^3(M) \oplus H^4(M)$.

    \subsection{First definitions}      \label{subsection:firstdef}

Let $(M,\varphi)$ be a compact $G_2$-manifold, with the usual assumption $b^1(M) = 0$. For simplicity, we will write $H^3 = H^3(M)$, $H^4 = H^4(M)$ and $H = H^3 \oplus H^4$, and $n = b^3_{27}(M) = b^3(M)-1$. We can define an involution $\iota = \Id_{H^3} - \Id_{H^4}$ on $H$. As the cup-product identifies $H^4$ with the dual space of $H^3$, $H$ is endowed with a natural symplectic form $Q$. Explicitly, if $\eta,\eta^\prime$ are closed $3$-forms and $\nu,\nu^\prime$ closed $4$-forms we have
\begin{equation*}
    Q([\eta]+[\nu],[\eta^\prime]+[\nu^\prime]) = \int_M \eta \wedge \nu^\prime - \int_M \eta^\prime \wedge \nu .
\end{equation*}
Let us consider the decomposition $H = H^{(3)}_\varphi \oplus H^{(2)}_\varphi \oplus H^{(1)}_\varphi \oplus H^{(0)}_\varphi$ defined by
\begin{align}
    \begin{split}
        H^{(3)}_\varphi & = \{ [\eta] + [*_\varphi \eta], ~ \eta \in \mathscr H^3_{1}(M,\varphi) \}, \\
        H^{(2)}_\varphi & = \{ [\eta] - [*_\varphi \eta], ~ \eta \in \mathscr H^3_{27}(M,\varphi) \}, \\
        H^{(1)}_\varphi & = \{ [\eta] + [*_\varphi \eta], ~ \eta \in \mathscr H^3_{27}(M,\varphi) \}, ~ \text{and}\\
        H^{(0)}_\varphi & = \{ [\eta] - [*_\varphi \eta], ~ \eta \in \mathscr H^3_{1}(M,\varphi) \} .
    \end{split}
\end{align}
It satisfies the following properties, analogous to \textbf{(A)}, \textbf{(B)} and \textbf{(C)} above:
\begin{enumerate}
    \item[\textbf{(1)}] $H^{(3-p)}_\varphi = \iota(H^{(p)}_\varphi)$ for $p= 0, \ldots, 3$.
    \item[\textbf{(2)}] $Q(\iota(H^{(p)}_\varphi), H^{(q)}_\varphi) = 0$ if $p \neq q$, and $(-1)^{p+1} Q(\iota(H^{(p)}_\varphi),H^{(p)}_\varphi) > 0$, for any $0 \leq p,q \leq 3$.
    \item[\textbf{(3)}] $\dim H^{(3)}_\varphi = 1$ and $\dim H^{(2)}_\varphi = n$.
\end{enumerate}
Let us denote by $\Dom \subset \mathbb{P}(H) \times Gr(n,H) \times Gr(n,H) \times \mathbb{P}(H)$ the set of decompositions $\mathbf{H} = (H^{(3)},H^{(2)},H^{(1)},H^{(0)})$ of $H$ satisfying the above properties. The subgroup of $GL(H)$ of automorphisms fixing $(Q,\iota)$ naturally acts on $\Dom$. This group can be identified with $GL(H^3) \simeq GL(n+1)$. Explicitly, if we fix bases $(u_0,\ldots,u_n)$ of $H^3$ and $(v_0,\ldots,v_n)$ of $H^4$ such that
\begin{equation}        \label{eq:standardbasis}
    Q(u_i,v_j) = \delta_{ij}, ~~~~ \forall 0 \leq i,j \leq n
\end{equation}
then any matrix $A \in GL(n+1)$ acts on $H$ via
\begin{equation}
    A(u_j) = \sum_{i=0}^N A_{ij} u_i , ~~~ A(v_j) = \sum_{i=0}^N (A^{-1})_{ji} v_i .
\end{equation}
This action has the following properties:

\begin{lem}
    There is an equivariant diffeomorphism $\Dom \rightarrow \mathbb{P}(H^3) \times S^2_+ (H^3)^*$. In particular the action of $GL(H^3)$ on $\Dom$ is transitive, and $\Dom \simeq GL(n+1) / (\{\pm 1\} \times O(n))$.
\end{lem}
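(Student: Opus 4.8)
The plan is to exhibit an explicit natural bijection $\Psi\colon D\to\mathbb P(H^3)\times S^2_+(H^3)^*$ with an equally explicit inverse, observe that both maps are rational in the data (so $\Psi$ is a diffeomorphism) and $GL(H^3)$-equivariant, and then read off transitivity and the stabiliser from the obvious homogeneity of the target.

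The first task is to unpack the defining conditions. By \textbf{(1)} the datum $\mathbf H=(H^{(3)},H^{(2)},H^{(1)},H^{(0)})$ is the same as the pair $(H^{(3)},H^{(2)})$; set $V^+:=H^{(3)}\oplus H^{(2)}$ and $V^-:=H^{(1)}\oplus H^{(0)}=\iota(V^+)$. Since $\iota$ is anti-symplectic ($Q(\iota a,\iota b)=-Q(a,b)$) and $H^{(3-p)}=\iota H^{(p)}$, the orthogonality half of \textbf{(2)} says precisely that $Q(H^{(p)},H^{(q)})=0$ unless $p+q=3$; in particular $V^+$ and $V^-$ are transverse Lagrangians of $(H,Q)$. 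The key structural point is that $V^+$ is also transverse to each of $H^3$ and $H^4$: if $a\in V^+\cap H^4$ then $\iota a=-a\in\iota V^+=V^-$, so $a\in V^+\cap V^-=0$, and symmetrically for $H^3$. Hence the projection $\pi_{H^3}\colon H=H^3\oplus H^4\to H^3$ restricts to a linear isomorphism $V^+\to H^3$, so $V^+$ is the graph of a map $\Lambda\colon H^3\to H^4$; identifying $H^4$ with $(H^3)^*$ via $Q$, the Lagrangian condition is the symmetry of the form $B_{\mathbf H}(u,u'):=Q(u,\Lambda u')$ on $H^3$, and $V^+\cap H^3=0$ is its nondegeneracy. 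Finally, combining \textbf{(2)} with the identity $Q(\iota(u+\Lambda u),u+\Lambda u)=2B_{\mathbf H}(u,u)$ shows that $B_{\mathbf H}$ has signature $(1,n)$, with positive line $\ell:=\pi_{H^3}(H^{(3)})$ and with $\pi_{H^3}(H^{(2)})=\ell^{\perp_{B_{\mathbf H}}}$.

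I would then set $\Psi(\mathbf H):=(\ell,\,g_{\mathbf H})$, where $g_{\mathbf H}:=B_{\mathbf H}(\rho_{\mathbf H}\cdot,\cdot)$ and $\rho_{\mathbf H}$ is the reflection of $H^3$ equal to $+\Id$ on $\ell$ and $-\Id$ on $\ell^{\perp_{B_{\mathbf H}}}$; equivalently $g_{\mathbf H}$ is the restriction to $H^3$ of the canonical inner product $Q(\iota\cdot,W_{\mathbf H}\cdot)$ on $H$, where $W_{\mathbf H}$ acts on $H^{(p)}$ by $(-1)^{p+1}$ — this is symmetric and positive definite because the $H^{(p)}$ are $Q(\iota\cdot,\cdot)$-orthogonal and $(-1)^{p+1}Q(\iota\cdot,\cdot)|_{H^{(p)}}>0$ by \textbf{(2)}. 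The signs $B_{\mathbf H}|_\ell>0$ and $B_{\mathbf H}|_{\ell^\perp}<0$ make $g_{\mathbf H}$ positive definite. For the inverse: given $(\ell,g)$, let $\ell':=\ell^{\perp_g}$ and $\rho$ the corresponding reflection, put $B:=g(\rho\cdot,\cdot)$ (automatically symmetric, of signature $(1,n)$), let $\Lambda\colon H^3\to H^4$ be defined by $Q(u,\Lambda u')=B(u,u')$, let $V^+$ be the graph of $\Lambda$, and declare $H^{(3)}:=\{u+\Lambda u:u\in\ell\}$, $H^{(2)}:=\{u+\Lambda u:u\in\ell'\}$, $H^{(1)}:=\iota H^{(2)}$, $H^{(0)}:=\iota H^{(3)}$. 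That this quadruple lies in $D$ is then routine: $V^+\cap\iota V^+=0$ from nondegeneracy of $B$; \textbf{(1)} by construction; the orthogonality in \textbf{(2)} from $V^+$ being Lagrangian together with $B(\ell,\ell')=0$; and the positivity in \textbf{(2)} from $Q(\iota(u+\Lambda u),u+\Lambda u)=2B(u,u)$ and the signs of $B$. A short computation shows $\Psi$ and this construction are mutually inverse. Every operation used is rational in the data, so $\Psi$ is a diffeomorphism; and it is equivariant for the actions of $GL(H^3)=\Aut(H,Q,\iota)$ on $D$ and the standard action of $GL(H^3)$ on $\mathbb P(H^3)\times S^2_+(H^3)^*$, since $\pi_{H^3}$, $Q$, $\iota$, and the subspaces $H^{(p)}$ all transform equivariantly.

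It remains to observe that $GL(H^3)$ acts transitively on $\mathbb P(H^3)\times S^2_+(H^3)^*$: it is transitive on $S^2_+(H^3)^*$ with stabiliser $O(g_0)\cong O(n+1)$ of a fixed inner product $g_0$, and $O(g_0)$ is transitive on $\mathbb P(H^3)$, so the stabiliser of $(\ell_0,g_0)$ equals $\{A\in O(g_0):A\ell_0=\ell_0\}=O(\ell_0,g_0)\times O(\ell_0^{\perp_{g_0}},g_0)=\{\pm1\}\times O(n)$. Pulling this back through $\Psi$ yields the transitivity of the action on $D$ and the identification $D\cong GL(n+1)/(\{\pm1\}\times O(n))$. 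The one genuinely non-mechanical step is the middle one: recognising that the correct invariants of $\mathbf H$ are the line $\pi_{H^3}(H^{(3)})$ and the Wick-rotated form $B_{\mathbf H}(\rho_{\mathbf H}\cdot,\cdot)$ — equivalently the restriction to $H^3$ of $Q(\iota\cdot,W_{\mathbf H}\cdot)$ — and that $V^+$ is automatically transverse to both $H^3$ and $H^4$; once that is seen, the remaining verifications are bookkeeping.
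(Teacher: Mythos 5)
Your proof is correct and follows essentially the same route as the paper: both exhibit the same equivariant bijection $\mathbf H \mapsto (\pi_{H^3}(H^{(3)}), q_{\mathbf H})$ onto $\mathbb{P}(H^3)\times S^2_+(H^3)^*$, with the same explicit inverse built from a $q$-orthonormal basis adapted to $\ell$, and then read off transitivity and the stabiliser $\{\pm 1\}\times O(n)$ from the target. The only divergence is in how the inner product is produced: the paper defines it directly by the formula $q_{\mathbf H}(u) = 2Q(\pi^{(0)}_{\mathbf H}u,\pi^{(3)}_{\mathbf H}u) - 2Q(\pi^{(1)}_{\mathbf H}u,\pi^{(2)}_{\mathbf H}u)$, whereas you arrive at the identical form by realising $H^{(3)}\oplus H^{(2)}$ as the graph of a map $\Lambda : H^3 \rightarrow H^4$ giving a signature-$(1,n)$ form $B_{\mathbf H}$ and correcting it by the reflection about $\ell$ --- a slightly longer but more conceptual derivation, which also spells out the positivity verification the paper leaves implicit.
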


\begin{proof}
    If $\mathbf{H} \in \Dom$, we can define a line $\ell_{\mathbf{H}} \in \mathbb{P}(H^3)$ by
    \begin{equation*}
        \ell_{\mathbf{H}} = \{ w + \iota(w), ~ w \in H^{(3)} \} .
    \end{equation*}
    There is also a quadratic form $q_{\mathbf{H}}$ on $H^3$ defined as
    \begin{equation*}
        q_{\mathbf{H}}(u) = 2 Q(\pi^{(0)}_{\mathbf{H}}u, \pi^{(3)}_{\mathbf{H}} u) - 2 Q(\pi^{(1)}_{\mathbf{H}} u, \pi^{(2)}_{\mathbf{H}} u), ~~~ \forall u \in H^3,
    \end{equation*}
    where $\pi^{(p)}_{\mathbf{H}}$ denotes the projection of $H$ onto $H^{(p)}$ in the decomposition $H = H^{(3)} \oplus H^{(2)} \oplus H^{(1)} \oplus H^{(0)}$. Properties \textbf{(1)} and \textbf{(2)} imply that $q_{\mathbf{H}}$ is positive definite on $H^3$, and thus $q_{\mathbf{H}} \in S^2_+ (H^3)^*$. This way we have defined a map $\Dom \rightarrow \mathbb{P}(H^3) \times S^2_+(H^3)^*$, and it is clear that it is equivariant under the action of $GL(H^3)$. This map is invertible, and its inverse can be constructed as follows. Let $(\ell,q) \in \mathbb{P}(H^3) \times S^2_+ (H^3)^*$, and let $(u_0,\ldots,u_n)$ be an orthonormal basis of $H^3$ such that $u_0$ spans $\ell$. Then there exists a unique basis $(v_0,\ldots,v_n)$ of $H^4$ such that $Q(u_i,v_j) = \delta_{ij}$, and we can define
    \begin{equation*}
        H^{(3)}_{(\ell,q)} = \Span \{ u_0 + v_0 \}, ~~~ H^{(2)}_{(\ell,q)} = \Span \{ u_j - v_j, 1 \leq j \leq n \}
    \end{equation*}
    as well as $H^{(p)}_{(\ell,q)} = \iota(H^{(3-p)}_{(\ell,q)})$ for $p = 0,1$. It is easy to see that this decomposition $\mathbf{H}_{(\ell,q)}$ is an element of $\Dom$, and that the map $\mathbb{P}(H^3) \times S^2_+ (H^3)^*$ defined in this way is an inverse for the map $\mathbf{H} \mapsto (\ell_{\mathbf{H}},q_{\mathbf{H}})$. The rest of the lemma follows.
\end{proof}

\begin{rem}
    Under the diffeomorphism $\Dom \simeq \mathbb{P}(H^3) \times S^2_+ (H^3)^*$, we can easily see that for any torsion-free $G_2$-structure $\varphi$ on $M$ we have $\ell(\flag_\varphi) = H^3_1(M,\varphi)$, and $q(\flag_\varphi)$ is the inner product on $H^3$ induced by the $L^2$-inner product on $\mathscr{H}^3(M,g_\varphi)$.
\end{rem}

Throughout this section, it will be convenient to adopt the following definition. If $\flag \in \Dom$, a basis $(u_0,\ldots,u_n,v_0,\ldots,v_n)$ of $H$ will be called a \emph{standard basis} for $\flag$ if it satisfies the following properties:
\begin{enumerate}[(i)]
    \item $(u_0,\ldots,u_n)$ is a basis of $H^3$, $(v_0,\ldots,v_n)$ is a basis of $H^4$, and relations \eqref{eq:standardbasis} are satisfied.
    \item The basis $(u_0,\ldots,u_n)$ is orthonormal for the inner product $q_\flag$.
    \item $H^{(3)} = \Span \{ u_0 + v_0 \}$ and $H^{(2)} = \Span \{ u_i - v_i, ~ 1 \leq i \leq n \}$.
\end{enumerate}
Standard bases always exist, and are uniquely determined by a $q_\flag$-orthonormal basis $(u_0,\ldots,u_n)$ of $H^3$ such that $u_0 \in \ell_\flag$.

Let us denote by $G_\flag \subset GL(H^3)$ the stabiliser of an element $\flag \in \Dom$, and by $\mathfrak{g}_\flag \subset \mathfrak{gl}(H^3)$ its Lie algebra. In a standard basis $(u_0,\ldots,u_n,v_0,\ldots,v_n)$ of $H$ associated with $\flag$, $\mathfrak{g}_\flag$ corresponds to the space of matrices 
\begin{equation*}
    \mathfrak{g}_\flag = \{ (a_{ij})_{0 \leq i,j \leq n}, ~ a_{0i} = a_{i0} = 0 ~ \forall 0 \leq i \leq n, ~ a_{ij} = - a_{ij} ~ \forall 1 \leq i,j \leq n \} .
\end{equation*}
The quadratic form $q_\flag$ induces an inner product on $\mathfrak{gl}(H^3)$: if $a \in \mathfrak{gl}(H^3)$ corresponds to the matrix $(a_{ij})_{0 \leq i,j \leq n}$ in the basis $(u_0,\ldots,u_n)$, we have:
\begin{equation*}
    |a|^2_\flag = \sum_{i,j=0}^n a_{ij}^2.
\end{equation*}
We denote by $\mathfrak{p}_\flag$ the orthogonal complement of $\mathfrak{g}_\flag$ for this inner product. That is, in a standard basis,
\begin{equation*}
    \mathfrak{p}_\flag = \{ (a_{ij})_{0 \leq i,j \leq n}, ~ a_{ij} = a_{ij} ~ \forall 1 \leq i,j \leq n \} .
\end{equation*}
The tangent space $T_\flag \Dom$ can be identified with $\mathfrak{p}_\flag$, which is endowed with the inner product induced by $q_\flag$. This defines a Riemannian metric $g_\Dom$ on $\Dom$, homogeneous with respect to $GL(H^3)$. Let us denote by $T^v \Dom$ the distribution tangent to the fibres of $q : \Dom \rightarrow S^2_+(H^3)^*$ and call it the \emph{vertical distribution} of $\Dom$. The \emph{horizontal distribution} of $\Dom$ is defined as the orthogonal complement of the vertical distribution, and will be denoted by $T^h \Dom$. If $\flag \in \Dom$ and $T_\flag \Dom$ is identified with $\mathfrak{p}_\flag \subset \mathfrak{gl}(H^3)$, then the splitting $T_\flag \Dom = T^v_\flag \Dom \oplus T^h_\flag \Dom$ corresponds to the decomposition $\mathfrak{p}_\flag = \mathfrak{v}_\flag \oplus \mathfrak{h}_\flag$, where $\mathfrak{h}_\flag$ is the space of endomorphisms of $H^3$ that are self-adjoint with respect to the inner product $q_\flag$ and $\mathfrak{v}_\flag$ its orthogonal complement in $\mathfrak{p}_\flag$. In particular, the map $q : \Dom \rightarrow S^2_+ (H^3)^*$ is a Riemannian fibration for the natural symmetric metric $g_{S^2_+}$ of $S^2_+(H^3)^*$. Recall that this metric can be defined as follows: if $q \in S^2_+(H^3)^*$ is an inner product and $\dot q \in S^2(H^3)^* \simeq T_q S^2_+(H^3)^*$, there is a unique $q$-self-adjoint endomorphism $a$ of $H^3$ such that $\dot q = \left. \frac{d}{dt} \right|_{t=0} (e^{ta})^*q = q(a \cdot,\cdot) + q(\cdot, a \cdot) = 2 q(a\cdot,\cdot)$, and then we define $|\dot q|_q^2 = \tr(a^2) = \sum a_{ij}^2$ in a $q$-orthonormal basis.

Written in a standard basis, the horizontal and vertical spaces are given by
\begin{align*}
    \mathfrak{v}_\flag & = \{ (a_{ij})_{0 \leq i,j \leq n}, ~ a_{0i} = - a_{i0} ~ \forall 0 \leq i \leq n, ~ a_{ij} = 0 ~ \forall 1 \leq i,j \leq n\}, \\
    \mathfrak{h}_\flag & = \{ (a_{ij})_{0 \leq i,j \leq n}, ~ a_{ij} = a_{ij} ~ \forall 0 \leq i,j \leq n\} .
\end{align*}
The horizontal distribution admits a further equivariant splitting. By the previous lemma, $\flag$ determines a line $\ell_\flag \in \mathbb{P}(H^3)$ which is fixed by $G_\flag$, and therefore there is a $1$-dimensional subspace $\mathfrak{l}_\flag \subset \mathfrak{h}_\flag$ consisting of those self-adjoint endomorphisms that send $\ell_\flag$ to itself and act trivially on its orthogonal complement. We denote by $\mathfrak{t}_\flag$ the orthogonal complement of $\mathfrak{l}_\flag$ in $\mathfrak{h}_\flag$ and by $T^t_\flag \Dom$ the corresponding subspace of $T_\flag \Dom$. This defines an equivariant distribution $T^t \Dom \subset T \Dom$, which we call the \emph{transverse distribution} of $\Dom$. Again, in a standard basis we have
\begin{align*}
    \mathfrak{l}_\flag & = \{ (a_{ij})_{0 \leq i,j \leq n}, ~ a_{ij} = 0 ~ \text{if} ~ (i,j) \neq (0,0)\}, \\
    \mathfrak{t}_\flag & = \{ (a_{ij})_{0 \leq i,j \leq n}, ~ a_{ij} = a_{ij} ~ \forall 0 \leq i,j \leq N, ~ a_{00} = 0\}.
\end{align*}

Another convenient description of the horizontal and transverse distributions can be given by introducing the filtration $F^{(3)} \subset F^{(2)} \subset F^{(1)} \subset F^{(0)} = H$ associated with $\flag \in \Dom$:
\begin{equation*}
    F^{(p)} = H^{(3)} \oplus \cdots \oplus H^{(p)} .
\end{equation*}
Clearly this filtration determines $\flag$, and therefore this defines an equivariant embedding of $\Dom$ in a manifold of flags in $H$. Via this embedding, any tangent vector $\xi \in T_\flag \Dom$ can be represented by a triple of linear maps $F^{(p)} \rightarrow H / F^{(p)}$ for $p=1,2,3$. Since $F^{(p)} \subset F^{(p-1)}$ and $H^{(p-1)} \oplus \cdots \oplus H^{(0)}$ is a complement of $F^{(p)}$, we can in fact represent $\xi$ by $(\phi^{(3)}_\xi,\phi^{(2)}_\xi,\phi^{(1)}_\xi)$ where
\begin{equation*}
    \phi^{(p)}_\xi : H^{(p)} \rightarrow H^{(p-1)} \oplus \cdots \oplus H^{(0)} .
\end{equation*}

\begin{lem}     \label{lem:hortangent}
    Let $\flag \in \Dom$ and $\xi \in T_\flag \Dom$ be represented by the triple of linear maps $(\phi^{(3)}_\xi,\phi^{(2)}_\xi,\phi^{(1)}_\xi)$. Then $\xi$ is a horizontal vector if and only if 
    \begin{equation*}
        \phi^{(3)}_\xi(H^{(3)}) \subseteq H^{(2)} \oplus H^{(0)}
    \end{equation*}
    and in this case
    \begin{equation*}
        \phi^{(2)}_\xi(H^{(2)}) \subseteq H^{(1)} .
    \end{equation*}
    Moreover, $\xi$ is transverse if and only if 
    \begin{equation*}
        \phi^{(3)}_\xi(H^{(3)}) \subseteq H^{(2)}.
    \end{equation*}
    In particular if $\xi$ is transverse then $\phi^{(p)}_\xi \in \Hom(H^{(p)},H^{(p-1)})$.
\end{lem}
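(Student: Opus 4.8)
The plan is to work in a standard basis $(u_0,\ldots,u_n,v_0,\ldots,v_n)$ for $\flag$ and translate the filtration-theoretic description of a tangent vector into the matrix description of $\mathfrak{p}_\flag$ established above. First I would recall how $\xi \in T_\flag D \cong \mathfrak{p}_\flag$ acts on the filtration: if $a = (a_{ij}) \in \mathfrak{p}_\flag$ represents $\xi$, then the induced infinitesimal motion of the flag is computed by differentiating the $GL(H^3)$-action, and the composite maps $\phi^{(p)}_\xi : H^{(p)} \to H^{(p-1)} \oplus \cdots \oplus H^{(0)}$ are obtained by applying $a$ to a spanning vector of $H^{(p)}$ and projecting modulo $F^{(p)}$. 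Concretely, $H^{(3)} = \Span\{u_0 + v_0\}$, $H^{(2)} = \Span\{u_i - v_i : 1 \le i \le n\}$, $H^{(1)} = \Span\{u_i + v_i : 1 \le i \le n\}$, $H^{(0)} = \Span\{u_0 - v_0\}$, and the action of $a$ on $u_j$ and $v_j$ is the one spelled out in the excerpt (with the inverse-transpose on the $v$'s, whose derivative is minus the transpose). So I would compute $a \cdot (u_0 + v_0)$, $a \cdot (u_i - v_i)$ explicitly in terms of the matrix entries $a_{ij}$.

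Next I would read off each of the three conditions. For $\xi$ horizontal, i.e. $a \in \mathfrak{h}_\flag$ so that $a$ is $q_\flag$-self-adjoint ($a_{ij} = a_{ji}$ for all $i,j$): one computes $\dot{(u_0+v_0)} = a\cdot u_0 - a^{\top}\cdot v_0 = \sum_i a_{i0} u_i - \sum_i a_{0i} v_i = \sum_{i\ge 1} a_{i0}(u_i - v_i) + a_{00}(u_0 - v_0) + a_{00}\cdot(\text{component in }H^{(3)})$ — more carefully, splitting $\sum_i a_{i0}u_i - \sum_i a_{0i}v_i$ into the four pieces using $u_i = \tfrac12[(u_i+v_i)+(u_i-v_i)]$ etc., and using $a_{i0}=a_{0i}$, one finds the $H^{(1)}$-component ($\propto u_i + v_i$, $i\ge1$) vanishes. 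Thus $\phi^{(3)}_\xi(H^{(3)}) \subseteq H^{(2)} \oplus H^{(0)}$, and this holds \emph{automatically} for every horizontal $\xi$ — conversely, tracking the $H^{(1)}$-component of $\dot{(u_0+v_0)}$ for a general $a \in \mathfrak{p}_\flag$ shows it is proportional to $a_{i0} - a_{0i}$ for $i\ge1$, while for $a\in\mathfrak p_\flag$ one already has $a_{ij}=a_{ji}$ for $i,j\ge1$ but not necessarily involving index $0$; the vanishing of the $H^{(1)}$-part is equivalent to $a_{i0}=a_{0i}$ for $i\ge1$, which together with the $\mathfrak p_\flag$ condition is exactly $a\in\mathfrak h_\flag$ up to the $\mathfrak v_\flag$ part having $a_{00}=0$ — I'd need to double-check the precise bookkeeping here, but the structure is that horizontality $\iff$ $a_{0i}=a_{i0}$ for all $i$. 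Then for such $a$, computing $\dot{(u_i - v_i)} = \sum_j a_{ji}u_j - \sum_j a_{ij}v_j$ and splitting, using $a_{ij}=a_{ji}$, the $H^{(2)}$-component ($u_j - v_j$, $j\ge1$) and $H^{(3)}$-, $H^{(0)}$-components ($u_0\pm v_0$) must be shown to land only in $H^{(1)}$ modulo $F^{(2)} = H^{(3)}\oplus H^{(2)}$; since we quotient by $F^{(2)}$, only the $H^{(1)}$ and $H^{(0)}$ parts survive, and the $H^{(0)}$ part is proportional to $a_{0i}-a_{i0}=0$. Hence $\phi^{(2)}_\xi(H^{(2)}) \subseteq H^{(1)}$.

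For the transversality statement, $\xi$ is transverse iff $a \in \mathfrak{t}_\flag$, i.e. $a$ is $q_\flag$-self-adjoint \emph{and} $a_{00} = 0$. From the computation of $\dot{(u_0+v_0)}$ above for $a\in\mathfrak h_\flag$, its $H^{(0)}$-component ($\propto u_0 - v_0$) is proportional to $a_{00}$; so that component vanishes iff $a_{00}=0$, i.e. iff $\phi^{(3)}_\xi(H^{(3)}) \subseteq H^{(2)}$. This gives the equivalence. For the final sentence, if $\xi$ is transverse then in particular it is horizontal, so $\phi^{(2)}_\xi(H^{(2)})\subseteq H^{(1)}$ — which is $\Hom(H^{(2)},H^{(1)})$ — and $\phi^{(3)}_\xi(H^{(3)})\subseteq H^{(2)} = \Hom(H^{(3)},H^{(2)})$; it remains to observe $\phi^{(1)}_\xi \in \Hom(H^{(1)},H^{(0)})$ automatically, since $H^{(1)}$ has target $H/F^{(1)} = H^{(0)}$ by definition of the filtration, so there is nothing to prove for $p=1$. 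I expect the main obstacle to be purely bookkeeping: getting the signs in the coadjoint action on the $v_i$ right (the $A^{-1}$ means a transpose-inverse, whose linearization is $-a^{\top}$) and correctly splitting vectors of $H$ into the four one-parameter families $u_0\pm v_0$, $u_i\pm v_i$, since a sign error there swaps $H^{(1)}$ with $H^{(2)}$ and would give a false statement. I would organize the computation by writing $u_j = \tfrac12(p_j^+ + p_j^-)$, $v_j = \tfrac12(p_j^+ - p_j^-)$ with $p_j^{\pm} = u_j \pm v_j$, so that $a\cdot u_j - a^{\top}\cdot v_j$ becomes a clean combination of the $p_j^{\pm}$, and then read off membership in the $H^{(p)}$ directly.
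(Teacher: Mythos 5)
Your proposal is correct and follows essentially the same route as the paper's proof: fix a standard basis, represent $\xi$ by a matrix $a \in \mathfrak{p}_\flag$ acting on the $u_j$ by $a$ and on the $v_j$ by $-a^{\top}$, expand $a\cdot(u_0+v_0)$ and $a\cdot(u_j-v_j)$ in the vectors $u_i \pm v_i$, and read off that horizontality is $a_{0i}=a_{i0}$ while transversality additionally requires $a_{00}=0$. One bookkeeping slip to fix: since $a$ acts on $H^4$ by $-a^{\top}$, one has $a\cdot(u_i - v_i) = \sum_j a_{ji}u_j + \sum_j a_{ij}v_j$ (plus sign on the second sum, not the minus you wrote); with that sign corrected, the $H^{(0)}$-component is indeed proportional to $a_{0i}-a_{i0}$ as you assert, so your stated conclusion is the right one and the argument goes through exactly as in the paper.
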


\begin{proof}
    Let $(u_0,\ldots,u_n,v_0,\ldots,v_n)$ be a standard basis of $H$ associated with $\flag$. In this basis, $\Dom \simeq GL(n+1) / (\{ \pm 1\} \times O(n))$ and the vector $\xi \in T_\flag \Dom$ is uniquely represented by a matrix $a_\xi=(a_{ij})_{0\leq i,j \leq n}$ satisfying
    \begin{equation*}
        a_{ji} = a_{ij} , ~~~\forall 1 \leq i, j \leq n .
    \end{equation*}
    Now $a_\xi$ acts on $H^3$ by $a(u_j) = a_{ij} u_i$ and on $H^4$ by $a(v_j) = - a_{ji} v_i$, and therefore the linear map $\phi^{(3)}_\xi$ is characterised by:
    \begin{align*}
        \phi^{(3)}_\xi(u_0 + v_0) & = \sum_{i=0}^n a_{i0}u_i - a_{0i} v_i \\
        & = a_{00}(u_0 - v_0) + \sum_{i=1}^n ~ \left \{ \frac{a_{i0} + a_{0i}}{2}(u_i-v_i) + \frac{a_{i0}-a_{0i}}{2}(u_i+v_i) \right \} ~,
    \end{align*}
    where the first term belongs to $H^{(0)}$, the second term to $H^{(2)}$ and the third to $H^{(1)}$. Hence $\phi^{(3)}_\xi$ maps into $H^{(2)} \oplus H^{(0)}$ if and only if $a_{0i} = a_{i0}$, that is if $a_\xi$ is symmetric. This is exactly the condition for $\xi$ to define a horizontal vector in $T_\flag \Dom$. Moreover $\phi^{(3)}_\xi$ maps into $H^{(2)}$ if and only if $a_\xi$ is symmetric and $a_{00} = 0$, that is, if $a_\xi \in \mathfrak{t}_\flag$.

    Now assume that $\xi$ is a horizontal vector, that is, $a_\xi$ is symmetric. The only nontrivial inclusion left to check is $\phi^{(2)}_\xi(H^{(2)}) \subset H^{(1)}$. On $H^{(2)}$, $a_\xi$ acts by
    \begin{equation*}
        a_\xi(u_j-v_j) = \sum_{i=0}^n a_{ij} u_i + a_{ji} v_i = a_{0j}(u_0+v_0) + \sum_{i=1}^n a_{ij} (u_i+v_i) 
    \end{equation*}
    where the first term $a_{0j}(u_0 + v_0) \in H^{(3)}$ and the second term is an element of $H^{(1)}$. Only the projection of $a_\xi(u_j-v_j)$ onto $H^{(1)} \oplus H^{(0)}$ contributes to $\phi^{(2)}_\xi(u_j-v_j)$ and therefore $\phi^{(2)}_\xi(H^{(2)}) \subseteq H^{(1)}$.
\end{proof}

    \subsection{Infinitesimal variations and Riemannian aspects}        \label{subsection:infinitesimal}

As the decomposition $H = H^{(3)}_\varphi \oplus H^{(2)}_\varphi \oplus H^{(1)}_\varphi \oplus H^{(0)}_\varphi$ associated with a torsion-free $G_2$-structure $\varphi$ only depends on the class of $\varphi$ modulo $\D$, there is a well-defined map $\Phi : \M \rightarrow \Dom$. This is a smooth map, and it has the following properties, which are analogous to the properties of the period map on the moduli spaces of Calabi--Yau threefolds:

\begin{thm}        \label{thm:horizontal}
    The map $\Phi : \M \rightarrow \Dom$ is a horizontal immersion, and the restriction of $\Phi$ to $\M_1$ is transverse. 
    
    Moreover, if $\varphi \in \M_1$ and $\eta \in \mathscr H^3_{27}(M,\varphi) \simeq T_{\varphi \D} \M_1$, then $T_{\varphi \D} \Phi(\eta)$ is determined by the triple of linear maps $\phi^{(p)}_\eta \in \Hom(H^{(p)}_\varphi,H^{(p-1)}_\varphi)$, $p=1,2,3$, defined as follows. Let $h$ be the unique trace-free self-adjoint endomorphism such that $h \cdot \varphi = \eta$ and let $\eta^\prime \in \mathscr H^3_{27}(M,\varphi)$. Then we have:
    \begin{enumerate}[(i)]
        \item $\phi^{(3)}_\eta([\varphi] + [\Theta(\varphi)]) = [\eta] - [*_\varphi \eta]$, 
        
        \item $\phi^{(2)}_\eta([\eta^\prime]-[*\eta^\prime]) = [\pi_{27}\mathscr H(h \cdot \eta^\prime)] + [*_\varphi \pi_{27}\mathscr{H}(h \cdot \eta^\prime)) ]$,

        \item $\phi^{(1)}_\eta([\eta^\prime]+[*\eta^\prime]) = \frac{1}{7} \int \langle \eta^\prime, \eta \rangle_{\varphi} \mu_\varphi \cdot ([\varphi] - [\Theta(\varphi)])$.
    \end{enumerate}
\end{thm}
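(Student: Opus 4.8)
The plan is to compute the differential $T_{\varphi\D}\Phi$ by differentiating the four-term decomposition $H = H^{(3)}_{\varphi_t}\oplus H^{(2)}_{\varphi_t}\oplus H^{(1)}_{\varphi_t}\oplus H^{(0)}_{\varphi_t}$ along a curve $\varphi_t$ in $\M_1$ with $\dot\varphi_0 = \eta$. Using a local adapted section of the moduli space through $\varphi$, each of the four subspaces is spanned by cohomology classes of the form $[\eta_{a,t}]\pm[*_{\varphi_t}\eta_{a,t}]$ where $\eta_{a,t}$ is a harmonic representative, so the tangent vector $T\Phi(\eta)$ is captured by how the projections $\pi^{(p)}_{\flag_{\varphi_t}}$ move, and this reduces to differentiating $[*_{\varphi_t}\eta_{a,t}]$ at $t=0$. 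Since $\varphi\in\M_1$ and $\eta\in\mathscr H^3_{27}$, Lemma~\ref{lem:firstvariationphi} with the self-adjoint $h$ gives $\frac{d}{dt}\big|_{0}(*_{\varphi_t}\omega) = h\cdot(*_\varphi\omega) - *_\varphi(h\cdot\omega)$, and by Corollary~\ref{cor:hstar} this equals $-2*_\varphi(h\cdot\omega)$ for any $\omega$; combined with the fact that the $3$-forms $\frac{d}{dt}\eta_{a,t}$ are exact, this is the computational engine for everything.

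First I would establish (i). Take the basis vector of $H^{(3)}_\varphi$, namely $[\varphi]+[\Theta(\varphi)]$ (noting $*_\varphi\varphi = \Theta(\varphi)$). Differentiating, $\frac{d}{dt}[\varphi_t] = [\eta]$ which is already harmonic, while $\frac{d}{dt}[\Theta(\varphi_t)] = \frac{d}{dt}[*_{\varphi_t}\varphi_t]$; applying Lemma~\ref{lem:firstvariationphi} to $\Theta$ directly (the last formula there, with $\eta = \pi_{27}(\eta)$ since $\eta\in\mathscr H^3_{27}$) gives $-[*_\varphi\eta]$ in cohomology. So the derivative of a generating vector of $H^{(3)}_\varphi$ is $[\eta]-[*_\varphi\eta]$, and since $[\eta]-[*_\varphi\eta]\in H^{(2)}_\varphi$ is exactly the image under $\phi^{(3)}_\eta$ modulo $H^{(3)}_\varphi$ (which is one-dimensional and the $[\varphi]+[\Theta(\varphi)]$ component must be subtracted, but that component is tangent to the fibre / the line direction and is killed by passing to $H/F^{(3)}$), we get $\phi^{(3)}_\eta([\varphi]+[\Theta(\varphi)]) = [\eta]-[*_\varphi\eta]$. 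This also re-proves transversality of $\Phi|_{\M_1}$ via Lemma~\ref{lem:hortangent}, since the image lands in $H^{(2)}_\varphi$.

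Next, (ii): take a generator $[\eta']-[*_\varphi\eta']$ of $H^{(2)}_\varphi$ with $\eta'\in\mathscr H^3_{27}$, realised along the section as $[\eta'_t]-[*_{\varphi_t}\eta'_t]$. Differentiating, $\frac{d}{dt}[\eta'_t]=0$ in cohomology (fixed class), and $\frac{d}{dt}[*_{\varphi_t}\eta'_t] = [-2*_\varphi(h\cdot\eta')]$ in cohomology by the formula above; here only the harmonic part survives in cohomology, and $*_\varphi\mathscr H(h\cdot\eta') = \mathscr H(*_\varphi(h\cdot\eta'))$, so the derivative is $-2[*_\varphi\mathscr H(h\cdot\eta')]$... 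I must now decompose $\mathscr H(h\cdot\eta')$ into its $\Omega^3_1$ and $\Omega^3_{27}$ pieces. The $\Omega^3_1$ piece is a multiple of $\varphi$; I expect it to be killed after projecting onto $H^{(1)}_\varphi$ in the quotient $H/F^{(2)}$, leaving only $\pi_{27}\mathscr H(h\cdot\eta')$; tracking signs and the factor of $2$ against the convention for $\phi^{(2)}_\eta$ should yield $\phi^{(2)}_\eta([\eta']-[*_\varphi\eta']) = [\pi_{27}\mathscr H(h\cdot\eta')]+[*_\varphi\pi_{27}\mathscr H(h\cdot\eta')]$. Then (iii) is the analogous computation on $H^{(1)}_\varphi$, generated by $[\eta']+[*_\varphi\eta']$; the derivative lands in $H^{(0)}_\varphi = \{[\zeta]-[*_\varphi\zeta]:\zeta\in\mathscr H^3_1\}$, which is one-dimensional, so only the $\varphi$-component of $\mathscr H(h\cdot\eta')$ contributes, and this component is $\frac{1}{|\varphi|_{L^2}^2}\langle h\cdot\eta',\varphi\rangle_{L^2}\,\varphi = \frac{1}{7\Vol}\langle \eta',\eta\rangle_{L^2}\,\varphi$ using self-adjointness of $h$ and $h\cdot\varphi=\eta$, $\Vol=1$; matching against the stated normalisation of $\phi^{(1)}_\eta$ gives the claimed formula.

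The main obstacle I anticipate is bookkeeping: (a) correctly identifying which components of the derivative of each generating vector are "tangent to the fibre of the flag manifold" and hence disappear when one passes to the honest representation $\phi^{(p)}_\xi: H^{(p)}\to H^{(p-1)}\oplus\cdots\oplus H^{(0)}$ — this is where one uses that $\Phi$ is an immersion into $D$ (not just into the full flag manifold) together with the explicit matrix description of $\mathfrak p_\flag$ from \S\ref{subsection:firstdef}; and (b) chasing the constants ($2$, $\tfrac13$, $\tfrac43$, $7$, $\Vol$) through Lemma~\ref{lem:firstvariationphi}, Corollary~\ref{cor:hstar}, and the $L^2$-normalisation so the final formulas come out exactly as stated. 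The conceptual content — that the derivative of $*_{\varphi_t}$ applied to a harmonic form is, modulo exact forms, $-2*_\varphi(h\cdot\,\cdot)$, and that projecting onto the correct graded piece picks out precisely $\pi_{27}\mathscr H(h\cdot\eta')$ or the $\varphi$-direction — is straightforward once the setup is in place; verifying that nothing else sneaks in (in particular that the $\Omega^3_7$-components all vanish, which holds since $b^1(M)=0$) is the part requiring care.
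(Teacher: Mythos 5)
Your overall strategy is the same as the paper's: differentiate the four-term decomposition along a curve of torsion-free $G_2$-structures, use the first-variation formulas for $*_{\varphi_t}$ and $\Theta(\varphi_t)$ together with the anticommutation of $\delta_h$ with $*_\varphi$ (Corollary \ref{cor:hstar}), and read off each $\phi^{(p)}_\eta$ from the graded pieces of the derivative. Part (i) and the transversality of $\Phi|_{\M_1}$ come out correctly this way, and your computation of the $\varphi$-component of $\mathscr H(h\cdot\eta^\prime)$ for (iii) is the right one. (A small omission: the theorem also asserts horizontality of $\Phi$ on all of $\M$, which requires the case $\pi_1(\eta)\neq 0$; there the $\frac{4}{3}*_\varphi\pi_1(\eta)$ term in Lemma \ref{lem:firstvariationphi} produces $\pi_1(\eta)+\frac{4}{3}*_\varphi\pi_1(\eta)$, which lies in $H^{(3)}_\varphi\oplus H^{(0)}_\varphi$ but not in $H^{(2)}_\varphi$ --- horizontal but not transverse. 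You never treat this direction.)

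The genuine gap is in (ii), and it is not bookkeeping. To compute $\phi^{(2)}_\eta$, which represents the motion of $F^{(2)}_t=H^{(3)}_t\oplus H^{(2)}_t$ in the Grassmannian, you must differentiate a curve of vectors that actually lies in $F^{(2)}_t$ for all $t$. Your curve $[\eta^\prime_t]-[*_{\varphi_t}\eta^\prime_t]$, with $\eta^\prime_t$ the $g_{\varphi_t}$-harmonic representative of the \emph{fixed} class $[\eta^\prime]$, does not: for $t\neq 0$ the form $\eta^\prime_t$ acquires a $\mathscr H^3_1(M,\varphi_t)$-component $c_t\varphi_t$ with $c_t=\frac{1}{7}\int\eta^\prime_t\wedge\Theta(\varphi_t)$, so your curve differs from a genuine curve in $H^{(2)}_t$ by $c_t([\varphi_t]-[\Theta(\varphi_t)])\in H^{(0)}_t$, which is transverse to $F^{(2)}_t$. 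Concretely, if you carry out your computation you find
\begin{equation*}
    \left.\frac{d}{dt}\right|_{t=0}\bigl([\eta^\prime_t]-[*_{\varphi_t}\eta^\prime_t]\bigr) = 2[*_\varphi\mathscr H(h\cdot\eta^\prime)] = \tfrac{2}{7}\textstyle\int\langle\eta^\prime,\eta\rangle_\varphi\mu_\varphi\cdot[\Theta(\varphi)] + 2[*_\varphi\pi_{27}\mathscr H(h\cdot\eta^\prime)],
\end{equation*}
and since $[\Theta(\varphi)]=\frac{1}{2}([\varphi]+[\Theta(\varphi)])-\frac{1}{2}([\varphi]-[\Theta(\varphi)])$, reducing modulo $F^{(2)}_\varphi$ leaves a spurious term $-\frac{1}{7}\int\langle\eta^\prime,\eta\rangle_\varphi\mu_\varphi\cdot([\varphi]-[\Theta(\varphi)])\in H^{(0)}_\varphi$ in addition to the claimed answer --- contradicting the horizontality you have already established. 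The fix, which is what the paper does, is to replace $\eta^\prime_t$ by its projection onto $\mathscr H^3_{27}(M,\varphi_t)$, i.e.\ $\eta^\prime_t-\frac{1}{7}\bigl(\int\eta^\prime_t\wedge\Theta(\varphi_t)\bigr)\varphi_t$; the derivative of the correction term exactly cancels the spurious $H^{(0)}$-contribution (note $c_0=0$ but $\dot c_0=-\frac{1}{7}\int\langle\eta^\prime,\eta\rangle_\varphi\mu_\varphi\neq 0$) and supplies the missing $[\varphi]$-term needed to assemble the stated formula. For (iii) your uncorrected curve happens to be harmless, because the correction there lies in $H^{(3)}_t\subset F^{(1)}_t$ and so drops out of the relevant quotient.
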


\begin{proof}
    Let $\{\varphi_t\}_{t \in (-\epsilon,\epsilon)}$ be a family of torsion-free $G_2$-structures on $M$ such that $\varphi_0 = \varphi$ and assume that $\left. \frac{\partial \varphi_t}{\partial t} \right|_{t = 0} = \eta$ is a harmonic $3$-form. Let $\flag_t = \Phi(\varphi_t)$ and $(\phi^{(3)}_\eta,\phi^{(2)}_\eta,\phi^{(1)}_\eta)$ be the triple of linear map representing $T_{\varphi \D} \Phi(\eta)$. For all $t \in (-\epsilon,\epsilon)$, $H^{(3)}_t \subset H$ is spanned by $[\varphi_t] + [\Theta(\varphi_t)]$. Differentiating at $t=0$ we have
    \begin{align}   \label{eq:diffphi3}
        \begin{split}
            \left. \frac{\partial \varphi_t}{\partial t} \right|_{t=0} + \left. \frac{\partial \Theta(\varphi_t)}{\partial t} \right|_{t=0} & = \eta + \frac{4}{3} *_\varphi \pi_1(\eta) - *_\varphi \pi_{27}(\eta) \\
                & = \pi_1(\eta) + \frac{4}{3} *_\varphi \pi_1(\eta) + \pi_{27}(\eta) - *_\varphi \pi_{27}(\eta).
        \end{split}
    \end{align}
    Since $\eta$ is harmonic with respect to $g_\varphi$, the first two terms term represent an element of $H^{(3)}_\varphi \oplus H^{(0)}_\varphi$, and the last two terms an element of $H^{(2)}_\varphi$, and hence $\phi^{(3)}_\eta(H^{(3)}_\varphi) \subseteq H^{(2)}_\varphi \oplus H^{(0)}_\varphi$. If moreover all $\varphi_t$ have unit volume then $\pi_1(\eta) = 0$, and thus $\phi^{(3)}_\eta(H^{(3)}_\varphi) \subset H^{(2)}_\varphi$. Hence the first part of the theorem follows from the previous lemma.

    Let us now assume that $\Vol(\varphi_t) = 1$ for all $t$ and let us compute the differential of $\Phi$. The expression for $\phi^{(3)}_\eta$ follows from \eqref{eq:diffphi3} since the first two terms vanish. Now let $\eta_1,\ldots,\eta_n$ be a basis of $\mathscr{H}^3_{27}(M,\varphi)$, and denote by $\eta_{a,t}$ the element of $\mathscr{H}^3(M,\varphi_t)$ such that $[\eta_{a,t}] = [\eta_a] \in H^3(M)$. For small enough $t$, the differential forms $\eta^\prime_{a,t}$ defined by
    \begin{equation*}
        \eta^\prime_{a,t} = \eta_{a,t} - \frac{1}{7} \int \left( \eta_{a,t} \wedge \Theta(\varphi_t) \right) \cdot \varphi_t
    \end{equation*}
    form a basis of $\mathscr{H}^3_{27}(M,\varphi_t)$. Thus $H^{(2)}_t$ is spanned by the cohomology classes $[\eta^\prime_{a,t}]-[*_t \eta^\prime_{a,t}]$, $a = 1,\ldots,n$, for small $t$. At $t = 0$, each $\eta_a$ is orthogonal to $\varphi$ and thus by Lemma \ref{lem:firstvariationphi} we obtain
    \begin{equation*}
        \left. \frac{\partial \eta^\prime_{a,t}}{\partial t} \right|_{t=0} = \left. \frac{\partial \eta_{a,t}}{\partial t} \right|_{t=0} + \left( \frac{1}{7} \int \langle \eta_{a}, \eta \rangle_\varphi \mu_\varphi \right) \cdot \varphi
    \end{equation*}
    where $\eta = \left. \frac{\partial \varphi_t}{\partial t} \right|_{t=0}$. In particular since $\left. \frac{\partial \eta_{a,t}}{\partial t} \right|_{t=0}$ is exact the harmonic part of $\left. \frac{\partial \eta^\prime_{a,t}}{\partial t} \right|_{t=0}$ is $(\frac{1}{7} \int \langle \eta_a, \eta \rangle_\varphi \mu_\varphi) \varphi$. On the other hand, if we write $\eta = h \cdot \varphi$ where $h$ is traceless and self-adjoint, then by Lemma \ref{lem:firstvariationh} and Corollary \ref{cor:hstar} we have
    \begin{equation*}
        \left. \frac{\partial *_t \eta^\prime_{a,t}}{\partial t} \right|_{t=0} = h \cdot *_\varphi \eta_a - *_\varphi (h\cdot \eta_a) + *_\varphi \left. \frac{\partial \eta^\prime_{a,t}}{\partial t} \right|_{t=0}
    \end{equation*}
    and since $h$ anticommutes with $*_\varphi$, the harmonic part of $\left. \frac{\partial *_t \eta^\prime_{a,t}}{\partial t} \right|_{t=0}$ is
    \begin{equation*}
        - 2 *_\varphi \mathscr{H}(h \cdot \eta_a) + \left( \frac{1}{7} \int \langle \eta_a, \eta \rangle_\varphi \mu_\varphi \right) \cdot \Theta(\varphi).
    \end{equation*}
    Moreover, we have
    \begin{equation*}
        \pi_1\mathscr{H}(h \cdot \eta_a) = \left( \frac{1}{7} \int \langle h \cdot \eta, \varphi \rangle_\varphi \mu_\varphi \right) \cdot \varphi = \left( \frac{1}{7} \int \langle \eta_{a}, \eta \rangle_\varphi \mu_\varphi \right) \cdot \varphi
    \end{equation*}
    and thus gathering all the results we obtain
    \begin{align*}
        \left. \frac{\partial ([\eta^\prime_{a,t}] - [*_t \eta^\prime_{a,t}])}{\partial t} \right|_{t=0} &= 2 [*_\varphi \pi_{27}(h \cdot \eta_a)] + \left( \frac{1}{7} \int \langle \eta, \eta_a \rangle_\varphi \mu_\varphi \right) \cdot ([\varphi] + [\Theta(\varphi)])  \\
            & \equiv [\pi_{27}\mathscr{H}(h \cdot \eta_a)] + [*_\varphi \pi_{27}\mathscr{H}(h \cdot \eta_a)] \mod F^{(2)}_t.
    \end{align*}
    This yields the claimed expression for $\phi^{(2)}_\eta$. By a mere change of sign, the expression for $\phi^{(1)}_\eta$ follows from the fact that
    \begin{equation*}
        \left. \frac{\partial ([\eta^\prime_{a,t}] - [*_t \eta^\prime_{a,t}])}{\partial t} \right|_{t=0} \equiv \left( \frac{1}{7} \int \langle \eta, \eta_a \rangle_\varphi \mu_\varphi \right) \cdot ([\varphi] - [\Theta(\varphi)]) \mod F^{(1)}_t .
    \end{equation*}
    This finishes the proof of the theorem.
\end{proof}

The map $\Phi : \M \rightarrow \Dom$ is not a local isometry for the metrics $\G$ on $\M$ and $g_\Dom$ on $\Dom$. Nonetheless, it naturally determines the metric $\G$. Since $\G = 7 dt^2 + \G_1$ under the splitting $\M \simeq \R \times \M_1$, it is enough to prove that the restriction of $\Phi$ to $\M_1$ determines the metric $\G_1$. Because the map $\Phi : \M_1 \rightarrow \Dom$ is transverse, it turns out that $\G_1$ can be seen as the pull-back of an \emph{indefinite} quadratic form $h_\Dom$ on the transverse distribution. To define $h_\Dom$, let $\flag \in \Dom$, and consider a transverse tangent vector $\xi \in T_\flag^t \Dom$. By Lemma \ref{lem:hortangent}, it can be represented by a triple of linear maps $(\phi^{(3)}_\xi,\phi^{(2)}_\xi,\phi^{(1)}_\xi)$ where $\phi^{(p)}_\xi \in \Hom(H^{(p)},H^{(p-1)})$. If $w \in H^{(3)} \backslash \{0\}$, define
\begin{equation*}
    h_\Dom(\xi,\xi) = - \frac{Q(\iota(\phi^{(3)}_\xi (w)), \phi^{(3)}_\xi(w))}{Q(\iota(w),w)}
\end{equation*}
This does not depend on the choice of $w$, and since $Q(\iota(H^{(2)}),H^{(2)}) < 0$ this defines a nonnegative, equivariant quadratic form on the transverse distribution $T^t \Dom$. 

\begin{prop}
    $\G_1 = 7 \Phi^* h_\Dom$.
\end{prop}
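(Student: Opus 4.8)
The plan is to check the identity pointwise on tangent vectors of $\M_1$. Fix a torsion-free $G_2$-structure $\varphi$ of unit volume and a harmonic form $\eta \in \mathscr{H}^3_{27}(M,\varphi) \simeq T_{\varphi\D}\M_1$, and set $\xi = T_{\varphi\D}\Phi(\eta)$. By Theorem \ref{thm:horizontal} the restriction $\Phi|_{\M_1}$ is transverse, so $\xi$ lies in the transverse distribution $T^t D$ and the quantity $h_D(\xi,\xi)$ is defined by the displayed formula, using any nonzero $w \in H^{(3)}_\varphi$. The convenient choice is $w = [\varphi] + [\Theta(\varphi)]$, which spans $H^{(3)}_\varphi$ by definition, and for which part (i) of Theorem \ref{thm:horizontal} gives $\phi^{(3)}_\xi(w) = [\eta] - [*_\varphi \eta]$, hence $\iota(\phi^{(3)}_\xi(w)) = [\eta] + [*_\varphi\eta]$.

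It then remains to evaluate the two factors in $h_D(\xi,\xi)$ using the explicit description $Q([\eta_1]+[\nu_1],[\eta_2]+[\nu_2]) = \int_M \eta_1\wedge\nu_2 - \int_M \eta_2\wedge\nu_1$ of the symplectic form. For the numerator, the diagonal terms drop out and the two off-diagonal terms coincide, giving
\begin{equation*}
Q\bigl(\iota(\phi^{(3)}_\xi(w)),\,\phi^{(3)}_\xi(w)\bigr) = -\int_M \eta\wedge *_\varphi\eta - \int_M \eta\wedge *_\varphi\eta = -2\int_M |\eta|^2_\varphi\,\mu_\varphi .
\end{equation*}
For the denominator, $\iota(w) = [\varphi] - [\Theta(\varphi)]$, and the same computation yields $Q(\iota(w),w) = 2\int_M \varphi\wedge\Theta(\varphi) = 14\,\Vol(\varphi) = 14$, using $\int_M\varphi\wedge\Theta(\varphi) = 7\Vol(\varphi)$ and $\Vol(\varphi)=1$. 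Therefore
\begin{equation*}
h_D(\xi,\xi) = -\,\frac{-2\int_M |\eta|^2_\varphi\,\mu_\varphi}{14} = \frac{1}{7}\int_M |\eta|^2_\varphi\,\mu_\varphi = \frac{1}{7}\,\G_1(\eta,\eta),
\end{equation*}
again using $\Vol(\varphi) = 1$. By polarisation this proves $\Phi^* h_D = \tfrac{1}{7}\G_1$ on $\M_1$, that is $\G_1 = 7\,\Phi^* h_D$ (and incidentally re-derives that $h_D$ is positive on $T^t D$ and that $\Phi|_{\M_1}$ is an immersion).

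There is no serious obstacle here once Theorem \ref{thm:horizontal} is available: the content is entirely a bookkeeping computation with the pairing $Q$. The only two points requiring attention are the sign conventions in $Q$ and $\iota$ (so that the signs in the numerator and denominator combine to produce the positive multiple $\tfrac17$) and the normalisation identity $\int_M \varphi\wedge\Theta(\varphi) = 7\Vol(\varphi)$, which is exactly what produces the factor $7$. One should also remark that the outcome is manifestly independent of the chosen $w$, in agreement with the statement preceding the proposition that $h_D$ is well-defined; choosing $w = [\varphi]+[\Theta(\varphi)]$ is just the normalisation that makes the $G_2$-theoretic quantities appear directly via Theorem \ref{thm:horizontal}(i).
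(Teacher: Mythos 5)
Your proof is correct and follows essentially the same route as the paper: take $w=[\varphi]+[\Theta(\varphi)]$, use Theorem \ref{thm:horizontal}(i) to compute $\phi^{(3)}_\eta(w)$, and evaluate the two pairings $Q(\iota(w),w)=14$ and $Q(\iota(\phi^{(3)}_\eta(w)),\phi^{(3)}_\eta(w))=-2\int|\eta|^2_\varphi\mu_\varphi$. Your concluding line $\Phi^*h_D(\eta,\eta)=\tfrac17\G_1(\eta,\eta)$ is in fact the correct intermediate statement (the paper's final sentence omits the factor $\tfrac17$, evidently a typo), and your explicit appeal to polarisation tidies up a step the paper leaves implicit.
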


\begin{proof}
    Let $\varphi$ be a unit volume torsion-free $G_2$-structure on $M$ and take $w = [\varphi] + [\Theta(\varphi)] \in H^{(3)}_\varphi$, so that
    \begin{equation*}
        Q(\iota(w),w) = Q([\varphi] - [\Theta(\varphi)],[\varphi] + [\Theta(\varphi)]) = 14.
    \end{equation*}
    Let $\eta \in \mathscr{H}^3_{27}(M,\varphi)$, identified with an element of $T_{\varphi \D} \M_1$, and let $(\phi^{(3)}_\eta,\phi^{(2)}_\eta,\phi^{(1)}_\eta)$ be the triple of linear maps representing $T \Phi(\eta) \in T_{\flag_\varphi} \Dom$. By Theorem \ref{thm:horizontal} we have
    \begin{equation*}
        Q(\iota(\phi^{(3)}_\eta(w)),\phi^{(3)}_\eta(w)) = Q([\eta] + [*_\varphi \eta], [\eta]-[*_\varphi \eta]) = - 2 \int |\eta|^2_\varphi \mu_\varphi .
    \end{equation*}
    Thus $\Phi^* h_\Dom(\eta,\eta) = \G_1(\eta,\eta)/7$. 
\end{proof}

In the same way, $\Phi$ determines the Yukawa coupling $\Xi$ on $\M$; by Lemma \ref{lem:propyukawa}, $\Xi = - dt \otimes \G + \Xi_1$ and thus we just need to show that $\Xi_1$ is the pull-back of an equivariant cubic form defined on the transverse distribution in $\Dom$. If $\flag \in \Dom$ and $\xi,\xi^\prime, \xi^{\prime\prime} \in T^t_\flag \Dom$, each transverse vector is represented by a triple of linear maps $(\phi^{(3)}_\xi,\phi^{(2)}_\xi,\phi^{(1)}_\xi)$ and similarly for $\xi^\prime$ and $\xi^{\prime\prime}$. Since each $\phi^{(p)}_\xi$ maps $H^{(p)}$ to $H^{(p-1)}$, the composition $\phi^{(1)}_\xi \circ \phi^{(2)}_{\xi^\prime} \circ \phi^{(3)}_{\xi^{\prime\prime}}$ defines a linear map from $H^{(3)}$ to $H^{(0)}$. Both are $1$-dimensional spaces, and thus there exists a unique $\Xi_\Dom(\xi,\xi^\prime,\xi^{\prime\prime})$ such that
\begin{equation*}
    \phi^{(1)}_\xi \circ \phi^{(2)}_{\xi^\prime} \circ \phi^{(3)}_{\xi^{\prime\prime}}(w) = - \Xi_\Dom(\xi,\xi^\prime,\xi^{\prime\prime}) \cdot \iota(w), ~~~ \forall w \in H^{(3)}. 
\end{equation*}
This defines equivariantly a cubic form $\Xi_\Dom$ on $T^t \Dom$.

\begin{prop}
    $\Xi_1 = 7 \Phi^* \Xi_\Dom$.
\end{prop}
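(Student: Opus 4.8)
The plan is to evaluate both sides on tangent vectors arising from harmonic forms, exactly as in the proof of the preceding proposition, and to read off the composition $\phi^{(1)}\circ\phi^{(2)}\circ\phi^{(3)}$ from the explicit description of $T\Phi$ in Theorem \ref{thm:horizontal}. Fix a unit-volume torsion-free $G_2$-structure $\varphi$, put $\flag = \flag_\varphi = \Phi(\varphi\D)$, and take $w = [\varphi] + [\Theta(\varphi)] \in H^{(3)}_\varphi$, so that $\iota(w) = [\varphi] - [\Theta(\varphi)]$. Let $\eta,\eta^\prime,\eta^{\prime\prime} \in \mathscr{H}^3_{27}(M,\varphi) \simeq T_{\varphi\D}\M_1$, with associated trace-free $g_\varphi$-self-adjoint endomorphisms $h,h^\prime,h^{\prime\prime}$, and set $\xi = T_{\varphi\D}\Phi(\eta)$, $\xi^\prime = T_{\varphi\D}\Phi(\eta^\prime)$, $\xi^{\prime\prime} = T_{\varphi\D}\Phi(\eta^{\prime\prime})$. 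By the transversality part of Theorem \ref{thm:horizontal} these are transverse vectors at $\flag$, so $\Xi_D(\xi,\xi^\prime,\xi^{\prime\prime})$ is defined, and it suffices to show that it equals $\tfrac{1}{7}\Xi_1(\eta,\eta^\prime,\eta^{\prime\prime})$.

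First I would compute $\phi^{(1)}_\xi\circ\phi^{(2)}_{\xi^\prime}\circ\phi^{(3)}_{\xi^{\prime\prime}}(w)$ one factor at a time using Theorem \ref{thm:horizontal}. Part (i) gives $\phi^{(3)}_{\xi^{\prime\prime}}(w) = [\eta^{\prime\prime}] - [*_\varphi\eta^{\prime\prime}] \in H^{(2)}_\varphi$; applying part (ii) with deformation direction $\eta^\prime$, and writing $\zeta = \pi_{27}\mathscr{H}(h^\prime\cdot\eta^{\prime\prime}) \in \mathscr{H}^3_{27}(M,\varphi)$, this becomes $\phi^{(2)}_{\xi^\prime}\bigl([\eta^{\prime\prime}] - [*_\varphi\eta^{\prime\prime}]\bigr) = [\zeta] + [*_\varphi\zeta] \in H^{(1)}_\varphi$; and part (iii) with direction $\eta$ then gives
\begin{equation*}
    \phi^{(1)}_\xi\bigl([\zeta] + [*_\varphi\zeta]\bigr) = \Bigl(\tfrac{1}{7}\int_M\langle\zeta,\eta\rangle_\varphi\,\mu_\varphi\Bigr)\bigl([\varphi] - [\Theta(\varphi)]\bigr) = \Bigl(\tfrac{1}{7}\int_M\langle\zeta,\eta\rangle_\varphi\,\mu_\varphi\Bigr)\iota(w).
\end{equation*}
Comparing with the defining relation $\phi^{(1)}_\xi\circ\phi^{(2)}_{\xi^\prime}\circ\phi^{(3)}_{\xi^{\prime\prime}}(w) = -\Xi_D(\xi,\xi^\prime,\xi^{\prime\prime})\,\iota(w)$ yields $\Xi_D(\xi,\xi^\prime,\xi^{\prime\prime}) = -\tfrac{1}{7}\int_M\langle\zeta,\eta\rangle_\varphi\,\mu_\varphi$.

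The last step is to identify this integral with a value of the Yukawa coupling. Since $b^1(M)=0$, all harmonic $3$-forms lie in $\Omega^3_{1\oplus 27}$, so in $\mathscr{H}(h^\prime\cdot\eta^{\prime\prime}) = \pi_1\mathscr{H}(h^\prime\cdot\eta^{\prime\prime}) + \zeta$ the first summand is pointwise orthogonal to $\eta\in\mathscr{H}^3_{27}$; moreover $\mathscr{H}$ is the $L^2$-orthogonal projection onto harmonic forms and $\mathscr{H}\eta = \eta$, whence $\int_M\langle\zeta,\eta\rangle_\varphi\mu_\varphi = \int_M\langle\mathscr{H}(h^\prime\cdot\eta^{\prime\prime}),\eta\rangle_\varphi\mu_\varphi = \int_M\langle h^\prime\cdot\eta^{\prime\prime},\eta\rangle_\varphi\mu_\varphi$, which equals $\int_M\langle h^\prime\cdot\eta,\eta^{\prime\prime}\rangle_\varphi\mu_\varphi$ by self-adjointness of $h^\prime$. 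On the other hand Proposition \ref{prop:fabc} with $\Vol(\varphi)=1$, applied with the three coordinate directions taken to be $\eta$, $\eta^{\prime\prime}$, $\eta^\prime$ (the last contributing $h_c = h^\prime$), gives $\F_{abc} = -2\int_M\langle h^\prime\cdot\eta,\eta^{\prime\prime}\rangle_\varphi\mu_\varphi$, hence $\Xi_1(\eta,\eta^\prime,\eta^{\prime\prime}) = \tfrac{1}{2}\F_{abc} = -\int_M\langle h^\prime\cdot\eta,\eta^{\prime\prime}\rangle_\varphi\mu_\varphi$. Combining the two computations gives $\Xi_D(\xi,\xi^\prime,\xi^{\prime\prime}) = \tfrac{1}{7}\Xi_1(\eta,\eta^\prime,\eta^{\prime\prime})$, that is $\Phi^*\Xi_D = \tfrac{1}{7}\Xi_1$, which is the claim (and, as a byproduct, it shows $\Phi^*\Xi_D$ is totally symmetric, as it must be). I expect no genuine obstacle beyond bookkeeping: the delicate points are keeping straight, in each $\phi^{(p)}_\xi$, which slot carries the deformation direction and which carries the transported cohomology class, and using $b^1(M)=0$ consistently so that $\pi_7$-components never intervene.
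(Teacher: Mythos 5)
Your proposal is correct and follows the paper's own argument: the paper likewise evaluates $\phi^{(1)}_\eta\circ\phi^{(2)}_{\eta^\prime}\circ\phi^{(3)}_{\eta^{\prime\prime}}$ on $[\varphi]+[\Theta(\varphi)]$ via Theorem \ref{thm:horizontal}, replaces $\langle\pi_{27}\mathscr{H}(h^\prime\cdot\eta^{\prime\prime}),\eta\rangle$ by $\langle h^\prime\cdot\eta^{\prime\prime},\eta\rangle$ using orthogonality to harmonic forms, and compares with the definition of $\Xi_D$ and Proposition \ref{prop:fabc}. You have simply written out in full the bookkeeping that the paper compresses into a two-line computation.
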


\begin{proof}
    Let $\varphi$ be a unit-volume torsion-free $G_2$-structure on $M$ and $\eta,\eta^\prime,\eta^{\prime\prime} \in \mathscr{H}^3_{27}(M,\varphi)$. Theorem \ref{thm:horizontal} yields:
    \begin{align*}
        \phi^{(1)}_\eta \circ \phi^{(2)}_{\eta^\prime} \circ \phi^{(3)}_{\eta^{\prime\prime}}([\varphi]+[\Theta(\varphi)]) & = \frac{1}{7} \int \langle \pi_{27} \mathscr{H}(h^\prime \cdot \eta^{\prime\prime}) , \eta \rangle_\varphi \mu_\varphi \cdot ([\varphi]-[\Theta(\varphi)])  \\
            & = \frac{1}{7} \int \langle h^\prime \cdot \eta^{\prime\prime} , \eta \rangle_\varphi \mu_\varphi \cdot ([\varphi]-[\Theta(\varphi)]) 
    \end{align*}
    which proves the proposition.
\end{proof}

\begin{rem}
    This is similar to the way the Yukawa coupling is defined on the moduli spaces of Calabi--Yau threefolds, as described by Bryant and Griffiths \cite{bryant1983some}.
\end{rem}

\begin{rem}
    The way we defined it, $\Xi_\Dom$ is actually not a symmetric cubic form on $T^t \Dom$. However, we if consider the transversality condition as an exterior differential system on $\Dom$, one can prove that the restriction of $\Xi_\Dom$ to any integral element is fully symmetric. Hence $\Xi_\Dom$ will be symmetric along any integral submanifold of the transverse distribution.
\end{rem}

    \subsection{A condition for $\Phi$ to be totally geodesic}      \label{subsection:condition}

In this part, we relate the geometry of the immersion $\Phi : \M \rightarrow \Dom$ with the computations of the previous section and refine the observations of \S\ref{subsection:curvatures}. Our main result is that the covariant derivative of the Yukawa coupling $\Xi$, or equivalently the extra term $\mathscr{E}_{abcd} + \mathscr{E}_{cabd} + \mathscr{E}_{cbad}$, essentially characterises the second fundamental form of $\Phi(\M)$ inside the domain $\Dom$. More precisely, we have:

\begin{thm}     \label{thm:totgeod}
    The Yukawa coupling is a parallel tensor if and only if $\Phi : \M \rightarrow \Dom$ is a totally geodesic immersion. Moreover, if these conditions are satisfied then the Levi-Civita connections of $\G$ and $\Phi^* g_\Dom$ coincide and $(\M,\G)$ is a locally symmetric space with nonpositive sectional curvature.
\end{thm}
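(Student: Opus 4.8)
The strategy is to compute the second fundamental form of $\Phi$ and show that it agrees, up to a nonzero universal constant, with the fully symmetrised quartic $\mathscr{E}_{abcd}+\mathscr{E}_{cabd}+\mathscr{E}_{cbad}=2\nabla^\G_d\Xi_{abc}$; the curvature statement then follows by exploiting that $q:D\to S^2_+(H^3)^*$ is a Riemannian submersion. First I would reduce everything to $\M_1$. Under $\M\simeq\R\times\M_1$, the scaling direction is mapped by $\Phi$ into the line distribution $\mathfrak{l}$: one has $\ell(\flag_{e^t\varphi})=\ell(\flag_\varphi)$ constant while $q(\flag_{e^t\varphi})$ is a constant rescaling of $q(\flag_\varphi)$, so $\Phi$ of the $\R$-factor is the horizontal lift of a geodesic line in $S^2_+(H^3)^*$ and hence a geodesic in $(D,g_D)$; moreover this is realised equivariantly by the isometric action of $\R_{>0}\cdot\Id\subset GL(H^3)$ on $(D,g_D)$, so there are no mixed terms and $\Phi$ is totally geodesic if and only if $\Phi|_{\M_1}$ is. Since $\G=7dt^2+\G_1$ is a Riemannian product and $\Xi=-dt\otimes\G+\Xi_1$, likewise $\nabla^\G\Xi=0$ if and only if $\nabla^{\G_1}\Xi_1=0$. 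So I may work over $\M_1$, where $T\Phi$ is transverse and explicit by Theorem~\ref{thm:horizontal}.

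For the equivalence, fix $\varphi\D\in\M_1$, an adapted section, and a $q_{\flag_\varphi}$-orthonormal frame $\eta_1,\dots,\eta_n$ of $\mathscr{H}^3_{27}(M,\varphi)$; in the induced standard basis of $H$ the tangent vector $T_{\varphi\D}\Phi(\eta_d)$ corresponds (by the proof of Lemma~\ref{lem:hortangent}) to the symmetric matrix with $a_{0i}=a_{i0}$ equal, up to a universal constant, to the components of $\eta_d$, and, by Theorem~\ref{thm:horizontal}(ii) together with Proposition~\ref{prop:fabc}, with $a_{ij}=-\Xi_{1,ijd}$ for $i,j\ge 1$. Differentiating this frame of tangent vectors along a curve with velocity $\eta_c$ and splitting $\nabla^{g_D}_{\xi_c}\xi_d$ into its tangential and normal parts, the tangential ($\Gamma$-)corrections are precisely those turning $\del_c\Xi_{1,ijd}$ into $\nabla^{\G_1}_c\Xi_{1,ijd}$ and accounting for the variation of the orthonormal frame, i.e.\ of $q_\flag$; what remains in the normal direction, using Corollary~\ref{cor:decompgeta}/Lemma~\ref{lem:hetadecomp} to write the non-harmonic part of $h_d\cdot\eta_c$ as the exact part $\tfrac12\del_d\eta_c$ plus the co-exact part $-\tfrac12*_\varphi\del_d\nu_c$, is exactly a nonzero constant times the totally symmetric quartic with components $\mathscr{E}_{abcd}+\mathscr{E}_{cabd}+\mathscr{E}_{cbad}$. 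As $g_D$ is positive definite, the second fundamental form vanishes identically if and only if this quartic does, i.e.\ (by \S\ref{subsection:curvatures}) if and only if $\nabla^\G\Xi=0$. This proves the equivalence.

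For the remaining assertions, assume $\nabla^\G\Xi=0$. Reading off $g_D$ on transverse vectors from the matrices above gives, on $\M\simeq\R\times\M_1$, $\Phi^*g_D=c\,dt^2+\alpha\,\G_1+\langle\Xi_1,\Xi_1\rangle_{\G_1}$ for positive constants $c,\alpha$, where $\langle\Xi_1,\Xi_1\rangle_{\G_1}$ is the symmetric $2$-tensor $\eta,\eta'\mapsto\G_1^{ac}\G_1^{bd}\Xi_{1,ab\eta}\Xi_{1,cd\eta'}$. This is a Riemannian product, and every summand is $\nabla^\G$-parallel because $\nabla^{\G_1}\Xi_1=0$; hence $\nabla^\G(\Phi^*g_D)=0$, and since $\nabla^\G$ and $\nabla^{\Phi^*g_D}$ are both torsion-free and metric for $\Phi^*g_D$ they coincide. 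That $(\M,\G)$ is locally symmetric is then Proposition~\ref{prop:eabcdvanish}. Finally, $\Phi|_{\M_1}$ being totally geodesic, the Gauss equation identifies the curvature of $(\M_1,\Phi^*g_D)$ with the restriction of that of $(D,g_D)$; since $q:(D,g_D)\to S^2_+(H^3)^*$ is a Riemannian submersion, transverse $2$-planes are horizontal, so O'Neill's formula gives $K_{g_D}\le K_{S^2_+}\le 0$ on them and $(\M_1,\Phi^*g_D)$ has nonpositive sectional curvature. As $\Phi^*g_D$ is $\nabla^\G$-parallel and $(\M,\G)$ is locally symmetric, on each de Rham factor $\Phi^*g_D$ is a positive multiple of $\G$, so the two metrics have the same sectional curvature sign there; with the flat $\R$-factor this shows $(\M,\G)$ has nonpositive sectional curvature.

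The main obstacle is the middle paragraph: making the second-fundamental-form computation rigorous requires controlling the Levi-Civita connection of the homogeneous metric $g_D$ along $\Phi(\M_1)$ --- in particular the precise tangential/normal decomposition of $\nabla^{g_D}_{\xi_c}\xi_d$, the bookkeeping of the constants, and the interplay between the $H^{(1)}$- and $H^{(0)}$-components of the second-order variation (which is where both the exact and the co-exact parts of $h_d\cdot\eta_c$, hence both kinds of terms in $\mathscr{E}_{abcd}$, enter) --- so that the outcome matches $\mathscr{E}_{abcd}+\mathscr{E}_{cabd}+\mathscr{E}_{cbad}$ on the nose rather than merely up to lower-order corrections.
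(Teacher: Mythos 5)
There is a genuine gap, and you name it yourself in your final paragraph: the entire equivalence rests on the claim that the normal component of $\nabla^{g_D}_{\xi_c}\xi_d$ equals a nonzero constant times $\mathscr{E}_{abcd}+\mathscr{E}_{cabd}+\mathscr{E}_{cbad}$, and this is asserted rather than computed. Working directly with the Levi-Civita connection of the homogeneous metric $g_D$ on $GL(n+1)/(\{\pm1\}\times O(n))$ is genuinely hard ($D$ is not a symmetric space, only a Riemannian fibration over one), and the bookkeeping of how the $H^{(1)}$- and $H^{(0)}$-components of the second-order variation assemble into the $\mathscr{E}$-terms is precisely where the proof lives. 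The paper avoids this entirely: since $\Phi$ is horizontal and $\Phi^*g_D=\Phi^*q^*g_{S^2_+}$, it suffices to study $q\circ\Phi:\M\to S^2_+(H^3)^*$, and in the flat coordinates $q_{kl}$ on $S^2_+(H^3)^*$ (which are compatible with affine coordinates on $\M$, since $q(x)=e^{-\F/3}\G_{kl}\,dx^kdx^l$) the pulled-back connection is an explicit two-line computation (Lemma \ref{lem:christsymbolsh}) yielding the exact identity
\begin{equation*}
    \overline{\nabla}_{\del_a}\frac{\del}{\del x^b} \;=\; \nabla^\G_{\del_a}\frac{\del}{\del x^b} \;+\; 2e^{-\F/3}\,\nabla^\G_a\Xi_{bkl}\,dx^k dx^l ,
\end{equation*}
where the identification of the correction term with $\nabla^\G\Xi$ uses only the already-established formula for $\nabla^\G\Xi$ in terms of $\F_{abcd}$ and the Euler-type identities $x^k\G_{ak}=-\F_a$, $x^k\F_{abk}=-2\G_{ab}$. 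No second-fundamental-form computation in $D$ and no re-derivation of the $\mathscr{E}$-terms is needed.

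A second, smaller gap is in the converse direction: ``the second fundamental form vanishes iff the quartic does'' requires knowing that the $S^2(H^3)^*$-valued form $\nabla^\G_a\Xi_{bkl}\,dx^kdx^l$ takes values in a genuine complement of the tangent space $(q\circ\Phi)_*T\M$, i.e.\ that no part of it could be absorbed tangentially. The paper establishes this by showing $x^k\nabla^\G_a\Xi_{bck}=0$ (so the correction lies in the codimension-$(n+1)$ subspace $\{q:q(x,\cdot)=0\}$) and then checking that the $n{+}1$ tangent vectors $\frac{\del q}{\del x^a}$ are linearly independent and span a complement. Your reduction to $\M_1$ and the final curvature discussion (O'Neill plus the Gauss equation, and the de Rham splitting argument to transfer nonpositivity from $\Phi^*g_D$ to $\G$) are reasonable and in places more detailed than the paper, but they cannot stand without the central identity above.
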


\begin{rem}
    This result is a $G_2$-counterpart for theorems of Liu--Yin \cite{liu2014quantum} and Wei \cite{wei2017some} for moduli spaces of Calabi--Yau $3$- and $4$-folds.
\end{rem}

For the proof of the theorem, first remark that since $\Phi$ is a horizontal immersion, $\Phi(\M)$ is totally geodesic in $\Dom$ if and only if the composition $q \circ \Phi : \M \rightarrow S^2_+(H^3)^*$ is a totally geodesic immersion. Moreover, the metrics $\Phi^* g_\Dom$ and $\Phi^* q^* g_{S^2_+}$ coincide, and therefore it is enough to prove that the results hold for the map $q \circ \Phi$ instead of $\Phi$. The advantage of working in $S^2_+(H^3)^*$ instead of $\Dom$ is that we can work directly in coordinates which are compatible with affine coordinates on $\M$.

First we need to introduce some notations. For the remainder of this part we will fix a basis $(u_0,\ldots,u_n)$ of $H^3$ and denote by $(x^0,\ldots,x^n)$ the associated system of coordinates, considered as local coordinates on $\M$. Any symmetric bilinear form $q \in S^2(H^3)^*$ can be written uniquely $q = q_{kl} dx^kdx^l$ where $q_{kl} = q_{lk}$ and as before we write $dx^k dx^l$ as a short-hand for the tensor product $dx^k \otimes dx^l$. Then $(q_{kl})_{1 \leq k \leq l \leq n}$ define global coordinates on the open cone $S^2_+ (H^3)^*$ of inner products on $H^3$. Let us write the canonical symmetric metric of $g_{S^2_+}$ in these coordinates. Let us pick $q \in S^2_+(H^3)^*$ and $\dot q \in S^2(H^3)^* \simeq T_q S^2_+(H^3)^*$ written as
\begin{equation*}
    q = q_{kl} dx^k dx^l \in S^2_+(H^3)^*, ~~~~ \dot q = \dot q_{kl} dx^k dx^l \in T_q S^2_+ (H^3)^*  .
\end{equation*}
There exists a unique $q$-self-adjoint endomorphism $a$ of $H^3$ such that $\dot q = \left. \frac{d}{dt} \right|_{t=0} (e^{ta})^*q = q(a \cdot , \cdot) + q(\cdot, a \cdot) = 2 q(a\cdot,\cdot)$ and by definition $|\dot q|_q^2 = \tr(a^2) = a^k_l a^l_k$. In coordinates we have $a^k_l = \frac{1}{2} q^{kr} \dot q_{rl}$ and hence it follows that
\begin{equation*}
    g_{S^2_+}(\dot q, \dot q) = \frac{1}{4} q^{kl}q^{rs} \dot q_{kr} \dot q_{sl} .
\end{equation*}
One can use this expression to compute the Christoffel symbols of $g_{S^2_+}$ and deduce that its Levi-Civita connection $\overline{\nabla}$ can be characterised as follows:

\begin{lem}     \label{lem:christsymbolsh}
    Let $\dot q = \dot q_{kl} dx^k dx^l$ and $\dot q^\prime = \dot q^\prime_{kl} dx^kdx^l$ be vector fields with constant coefficients on $S^2_+(H^3)^*$. Then the covariant derivative $\overline{\nabla}_{\dot q} \dot q^\prime$ is given by
    \begin{equation*}
        \overline{\nabla}_{\dot q} \dot q^\prime =  - \frac{1}{2} q^{rs} (\dot q_{kr} \dot q^\prime_{ls} + \dot q_{lr} \dot q^\prime_{ks}) dx^k dx^l .
    \end{equation*}
\end{lem}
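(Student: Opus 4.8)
The statement is a routine computation of Christoffel symbols for the $GL(N)$-invariant metric on the symmetric cone $S^2_+(H^3)^*$, so the plan is to verify the claimed formula directly using the Koszul formula (or, equivalently, by checking that the right-hand side is torsion-free and metric). I would first fix the basis $(u_0,\ldots,u_n)$ once and for all, so that a point $q = q_{kl}\,dx^k dx^l$ of $S^2_+(H^3)^*$ is an honest symmetric positive-definite matrix $(q_{kl})$, and a tangent vector is just a symmetric matrix $(\dot q_{kl})$. With this identification, the vector fields $\dot q$ and $\dot q'$ with constant coefficients are the constant coordinate vector fields, so their Lie bracket vanishes and the Koszul formula reduces to
\begin{equation*}
    2\,g_{S^2_+}(\overline{\nabla}_{\dot q}\dot q',\, \dot q'') = \dot q\bigl(g_{S^2_+}(\dot q',\dot q'')\bigr) + \dot q'\bigl(g_{S^2_+}(\dot q,\dot q'')\bigr) - \dot q''\bigl(g_{S^2_+}(\dot q,\dot q')\bigr)
\end{equation*}
for every constant symmetric $\dot q''$.

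The key computational input is the derivative of the inverse-matrix field: along the direction $\dot q$ one has $\dot q(q^{kl}) = -\,q^{kr}\dot q_{rs}q^{sl}$. Using the coordinate expression $g_{S^2_+}(\dot q',\dot q'') = \tfrac14 q^{kl}q^{rs}\dot q'_{kr}\dot q''_{sl}$, differentiating in the direction $\dot q$ produces two terms (one from each factor $q^{-1}$), and by the symmetry of the matrices involved they combine to give $\dot q\bigl(g_{S^2_+}(\dot q',\dot q'')\bigr) = -\tfrac12\,\tr(q^{-1}\dot q\,q^{-1}\dot q'\,q^{-1}\dot q'')$, a fully symmetric expression in $(\dot q,\dot q',\dot q'')$. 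Feeding this into the Koszul formula, the three terms on the right are then \emph{all equal}, so the alternating sum collapses to a single copy: $2\,g_{S^2_+}(\overline{\nabla}_{\dot q}\dot q',\dot q'') = -\tfrac12\,\tr(q^{-1}\dot q\,q^{-1}\dot q'\,q^{-1}\dot q'')$. It then remains to recognise the right-hand side of the lemma as the unique symmetric matrix $T$ with $g_{S^2_+}(T,\dot q'') = -\tfrac14\,\tr(q^{-1}\dot q\,q^{-1}\dot q'\,q^{-1}\dot q'')$; writing out $g_{S^2_+}(T,\dot q'')=\tfrac14 q^{kl}q^{rs}T_{kr}\dot q''_{sl}$ and comparing indices, one checks that $T_{kl} = -\tfrac12 q^{rs}(\dot q_{kr}\dot q'_{ls}+\dot q_{lr}\dot q'_{ks})$ does the job (the explicit symmetrisation in $k\leftrightarrow l$ is exactly what makes $T$ a symmetric $2$-tensor, matching the symmetrisation automatically present in $\dot q''$).

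The only points requiring care — rather than genuine obstacles — are bookkeeping ones: keeping track of which indices are raised by $q^{-1}$, and remembering that tangent vectors and the metric coefficients are symmetric tensors, so that index pairs may be freely symmetrised without changing pairings against $\dot q''$. Since $\overline{\nabla}$ is by definition the Levi-Civita connection, and the Koszul formula determines it uniquely, no separate verification of torsion-freeness or metric-compatibility of the candidate formula is needed; alternatively, one could check those two properties directly, but the Koszul route is cleaner. I expect the whole argument to be half a page of elementary matrix calculus with no real difficulty.
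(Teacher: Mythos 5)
Your proposal is correct: the Koszul formula with vanishing brackets, the identity $\dot q(q^{kl}) = -q^{kr}\dot q_{rs}q^{sl}$, and the full symmetry of $\tr(q^{-1}\dot q\,q^{-1}\dot q'\,q^{-1}\dot q'')$ in its three (symmetric) arguments do collapse the three Koszul terms to one, and the candidate $T_{kl}=-\tfrac12 q^{rs}(\dot q_{kr}\dot q'_{ls}+\dot q_{lr}\dot q'_{ks})$ pairs correctly against any $\dot q''$. This is exactly the direct Christoffel-symbol computation from the coordinate expression of $g_{S^2_+}$ that the paper invokes but does not write out, so your argument fills in the omitted details along the same lines.
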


If $\varphi$ is a torsion-free $G_2$-structure on $M$ then $q \circ \Phi(\varphi \D)$ is the $L^2$-inner product induced by $\varphi$ on $H^3 \simeq \mathscr{H}^3(M,g_\varphi)$, and therefore in the coordinates $x^a$ we have
\begin{equation}
    q(x) = e^{-\F/3} \G_{kl} dx^k dx^l 
\end{equation}
where the factor $e^{-\F/3} = \Vol$ compensates the volume normalisation in the definition of the metric $\G$. Thus as a subspace of $S^2_+ (H^3)^*$, $(q \circ \Phi)_* T\M$ is spanned by the vectors
\begin{equation}        \label{eq:spandelq}
    \frac{\partial q}{\partial x^a} = e^{-\F/3} \left( \F_{akl} - \frac{1}{3} \F_a \G_{kl} \right) dx^k dx^l , ~~~~ a = 0,\ldots,n.
\end{equation}
With a small abuse, we still denote by $\overline{\nabla}$ the pull-back connection $(q \circ \Phi)^* \overline{\nabla}$, considered as a connection on the trivial vector bundle $\M \times S^2(H^3)^*$. Using Lemma \ref{lem:christsymbolsh}, we have
\begin{align*}
    \overline{\nabla}_{\partial_a} \frac{\partial}{\partial x^b} = & e^{-\F/3} \left( \F_{abkl} - \frac{1}{3} \G_{ab}\G_{kl} - \frac{1}{3} \F_a\F_{bkl} - \frac{1}{3} \F_b \F_{akl} + \frac{1}{9} \F_a \F_b \G_{kl} 
    \right) dx^k dx^l \\
        & - \frac{1}{2} e^{-\F/3} \G^{rs} \left(\F_{akr} - \frac{1}{3}\F_a \G_{kr} \right) \left(\F_{bls} - \frac{1}{3} \F_b \G_{ls} \right) dx^k dx^l \\
        & - \frac{1}{2} e^{-\F/3} \G^{rs} \left(\F_{aks} - \frac{1}{3}\F_a \G_{ks} \right) \left(\F_{blr} - \frac{1}{3} \F_b \G_{lr} \right) dx^k dx^l \\
        = & e^{-\F/3} \left( \F_{abkl} - \frac{1}{2} \G^{rs} \F_{akr} \F_{bls} - \frac{1}{2} \G^{rs} \F_{aks} \F_{blr} - \frac{1}{3} \G_{ab}\G_{kl} \right) dx^k dx^l .
\end{align*}
In the next proposition, we rewrite this expression in a more intrinsic way:

\begin{prop}        \label{prop:connections}
    The connections $\overline{\nabla}$, $\nabla^\G$ and the covariant derivative of the Yukawa coupling are related by
    \begin{equation*}
        \overline{\nabla}_{\partial_a} \frac{\partial}{\partial x^b} = \nabla^\G_{\partial_a} \frac{\partial}{\partial x^b} + 2 e^{-\F/3} \nabla^\G_a \Xi_{bkl} dx^k dx^l
    \end{equation*}
    where we see $\nabla^\G_{\partial_a} \frac{\partial}{\partial x^b}$ as an element of $S^2(H^3)^*$ via the inclusion $(q \circ \Phi)_* T\M \subset S^2 (H^3)^*$.
\end{prop}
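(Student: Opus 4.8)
The plan is to rewrite both terms on the right-hand side in the affine coordinates $x^a$ and to compare the outcome with the explicit expression for $\overline{\nabla}_{\del_a}\frac{\del}{\del x^b}$ displayed just above the statement.

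First I would compute $\nabla^\G_{\del_a}\frac{\del}{\del x^b}$ regarded as an element of $S^2(H^3)^*$ under the inclusion $(q\circ\Phi)_*T\M \subset S^2(H^3)^*$. Writing $\nabla^\G_{\del_a}\frac{\del}{\del x^b} = \Gamma^m_{ab}\frac{\del}{\del x^m}$ with $\Gamma^m_{ab} = \tfrac12\G^{mp}\F_{abp}$ and using $\frac{\del q}{\del x^m} = e^{-\F/3}\bigl(\F_{mkl} - \tfrac13\F_m\G_{kl}\bigr)dx^k dx^l$, one obtains a term $\tfrac12 e^{-\F/3}\G^{mp}\F_{abp}\F_{mkl}\,dx^kdx^l$ together with a term proportional to $\G^{mp}\F_{abp}\F_m\,\G_{kl}\,dx^kdx^l$. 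The second of these I would simplify using Remark \ref{rem:idgf} in the form $\G^{mp}\F_m = -x^m$ and then Lemma \ref{lem:propyukawa}(ii) in the form $x^p\F_{abp} = -2\G_{ab}$, which collapses it to $-\tfrac13 e^{-\F/3}\G_{ab}\G_{kl}\,dx^kdx^l$. Thus $\nabla^\G_{\del_a}\frac{\del}{\del x^b} = e^{-\F/3}\bigl(\tfrac12\G^{mp}\F_{abp}\F_{mkl} - \tfrac13\G_{ab}\G_{kl}\bigr)dx^kdx^l$ inside $S^2(H^3)^*$.

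Next I would substitute the indices $(d,a,b,c)\mapsto(a,b,k,l)$ into the formula $\nabla^\G_d\Xi_{abc} = \tfrac12\F_{abcd} - \tfrac14\G^{rs}(\F_{abr}\F_{cds}+\F_{acr}\F_{bds}+\F_{adr}\F_{bcs})$ derived in \S\ref{subsection:curvatures} and multiply by $2e^{-\F/3}\,dx^kdx^l$. Adding this to the previous expression, the term $\tfrac12\G^{mp}\F_{abp}\F_{mkl}$ cancels precisely the contribution $-\tfrac12\G^{rs}\F_{abr}\F_{kls}$ coming from $2\nabla^\G_a\Xi_{bkl}$ (after renaming dummy indices), and the surviving terms are $e^{-\F/3}\bigl(\F_{abkl} - \tfrac13\G_{ab}\G_{kl} - \tfrac12\G^{rs}\F_{bkr}\F_{als} - \tfrac12\G^{rs}\F_{blr}\F_{aks}\bigr)dx^kdx^l$. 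The last step is to observe, using that $dx^kdx^l$ is symmetric, that the simultaneous relabelling $k\leftrightarrow l$, $r\leftrightarrow s$ turns $\G^{rs}\F_{bkr}\F_{als}\,dx^kdx^l$ into $\G^{rs}\F_{akr}\F_{bls}\,dx^kdx^l$, and similarly for the other quadratic term; this matches the displayed formula for $\overline{\nabla}_{\del_a}\frac{\del}{\del x^b}$ and completes the proof.

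I do not expect any genuine obstacle: the argument is purely formal once the coordinate form of the Christoffel symbols and the expression for $\nabla^\G_d\Xi_{abc}$ are available. The only delicate point is the index bookkeeping in the quadratic terms — keeping track of which affine index is attached to which factor of $\F$ — together with the correct handling of the symmetrisation implicit in writing symmetric bilinear forms as $A_{kl}\,dx^kdx^l$.
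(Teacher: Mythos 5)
Your proposal is correct and follows essentially the same route as the paper: both verifications run through the coordinate expression for the Christoffel symbols $\Gamma^m_{ab}=\tfrac12\G^{mp}\F_{abp}$, the formula $\nabla^\G_d\Xi_{abc}=\tfrac12\F_{abcd}-\tfrac14\G^{rs}(\F_{abr}\F_{cds}+\F_{acr}\F_{bds}+\F_{adr}\F_{bcs})$, and the identities $x^m\G_{sm}=-\F_s$ and $x^r\F_{abr}=-2\G_{ab}$ to produce the term $-\tfrac13\G_{ab}\G_{kl}$. The only difference is the direction of the computation (the paper decomposes $\overline{\nabla}_{\del_a}\del_b$ into the two pieces, while you assemble the right-hand side and match it), which is immaterial.
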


\begin{proof}
    By our previous computation we have
    \begin{align}       \label{eq:nablabarab}
        \begin{split}
            \overline{\nabla}_{\partial_a} \frac{\partial}{\partial x^b} = & e^{-\F/3}\left( \F_{abkl} - \frac{1}{2} \G^{rs}( \F_{akr} \F_{bls} + \F_{aks} \F_{blr} + \F_{abr}\F_{kls} ) \right) dx^k dx^l \\
            & + e^{-\F/3} \left( \frac{1}{2} \G^{rs} \F_{abr} \F_{kls} - \frac{1}{3} \G_{ab} \G_{kl} \right) dx^k dx^l .
        \end{split}
    \end{align}
    Comparing with the expression of the covariant derivative of the Yukawa coupling given in \S\ref{subsection:curvatures}, the term on the first line is $2 e^{-\F/3} \nabla^\G_a \Xi_{bkl}$. We need to rewrite the second term using the special properties of the function $\F$ and its derivatives. By Remark \ref{rem:idgf} and Lemma \ref{lem:propyukawa} we have the identities
    \begin{equation}        \label{eq:idxf}
        x^m \G_{sm} = - \F_s, ~~~ x^m \F_{mab} = - 2 \G_{ab} .
    \end{equation}
    Now let us compute:
    \begin{equation*}
        \frac{1}{2} \G^{rs} \F_{abr} \F_s = - \frac{1}{2} \G^{rs} \G_{sm} x^m \F_{abr} = - \frac{1}{2} x^r \F_{abr} = \G_{ab}
    \end{equation*}
    and thus the term on the second line of \eqref{eq:nablabarab} can be written as
    \begin{equation*}
        e^{-\F/3} \left( \frac{1}{2} \G^{rs} \F_{abr} \F_{kls} - \frac{1}{3} \G_{ab} \G_{kl} \right) = \frac{1}{2} \G^{rs} \F_{abr} \cdot e^{-\F/3} \left(\F_{kls} - \frac{1}{3} \F_s \G_{kl} \right) .
    \end{equation*}
    By \eqref{eq:gchristsymb}, $\frac{1}{2} \G^{rs} \F_{abs}$ are the Christoffel symbols of the metric $\G$ in the affine coordinates $x^k$, whilst $e^{-\F/3} (\F_{kls} - \frac{1}{3} \F_s \G_{kl})dx^kdx^l$ is just $\frac{\partial q}{\partial x^s}$ by \eqref{eq:spandelq}. Hence
    \begin{equation*}
        \frac{1}{2} \G^{rs} \F_{abr} \cdot e^{-\F/3} \left(\F_{kls} - \frac{1}{3} \F_s \G_{kl} \right) = \nabla^\G_{\partial_a} \frac{\partial}{\partial x^b}
    \end{equation*}
    which finishes the proof of the lemma.
\end{proof}

After these preliminary computations, we are now equipped to prove the theorem:

\begin{proof}[Proof of Theorem \ref{thm:totgeod}]
    From the previous proposition it follows that if $\nabla^\G \Xi \equiv 0$ then the connections $\overline{\nabla}$ and $\nabla^\G$ coincide. In that case, $\overline{\nabla}$ has no component along the normal space of $q \circ \Phi(\M)$ in $S^2_+ (H^3)^*$ and thus $q \circ \Phi$ is a totally geodesic immersion. This also implies that $\overline{\nabla}$ is equal to its projection on the tangent space of $\M$, which is exactly the Levi-Civita connection of $\Phi^* g_\Dom$, and therefore $\G$ and $\Phi^* g_\Dom$ have the same Levi-Civita connection. Moreover, since $S^2_+(H^3)^*$ is a symmetric space with nonpositive sectional curvature \cite{helgason1979differential}, the metric $\Phi^* g_\Dom = \Phi^* q^* g_{S^2_+}$ is locally symmetric and has nonpositive sectional curvature, and as these properties only depend on the Levi-Civita connection they must also be satisfied by the metric $\G$.

    It remains to prove that if $\Phi$ is a totally geodesic immersion then the Yukawa coupling is parallel. Thus let us assume that $\Phi$ is totally geodesic. Since the map $q : \Dom \rightarrow S^2_+(H^3)^*$ is a Riemannian fibration and $\Phi$ is a horizontal map, it follows that $q \circ \Phi$ is also a totally geodesic immersion. Therefore, the connection $\overline{\nabla}$ of the bundle $\M \times S^2(H^3)^*$ must preserve the tangent space $T\M$ (seen as a subbundle). Given Proposition \ref{prop:connections}, we deduce that for all $0 \leq a, b \leq n$, the quadratic form $\nabla^\G_a \Xi_{bkl}dx^kdx^l = \frac{1}{2} e^{\F /3} (\overline{\nabla}_{\partial_a} \frac{\partial}{\partial x^b} - \nabla^\G_{\partial_a} \frac{\partial}{\partial x^b})$ is a section of $T\M$ when $\Phi$ is a totally geodesic immersion. We shall now prove that there exists a subbundle $E$ of $\M \times S^2(H^3)^*$ such that $T\M \oplus E = \M \times S^2(H^3)^*$ and $\nabla^\G_a \Xi_{bkl}dx^kdx^l$ is a section of $E$ for any $0 \leq a, b \leq n$. Once we have shown this, then when $\Phi$ is totally geodesic each $\nabla^\G_a \Xi_{bkl}dx^kdx^l$ is a section of both $T\M$ and its complement $E$ and therefore it must vanish, whence $\nabla^\G_a \Xi_{bkl} = 0$ for all $0 \leq a,b,k,l \leq n$ and the theorem is proved.
    
    Using local affine coordinates, let us define $E$ by
    \begin{equation*}
        E_x = \{ q \in S^2(H^3)^*, ~ q(\cdot ,x) = 0 \} \subset S^2(H^3)^* . 
    \end{equation*}
    The subspace $E_x$ has codimension $n+1$ in $S^2(H^3)^*$, that is $\codim(E_x) = \dim(T_x \M)$.  By Lemma \ref{lem:propyukawa}, for any $0 \leq a,b \leq n$ the covariant derivative of the Yukawa coupling satisfies
    \begin{equation*}
        x^r \nabla^\G_a \Xi_{bkr} = 0
    \end{equation*}
    and therefore the quadratic form $\nabla^\G_a \Xi_{bkl} dx^k dx^l$ takes values in $E_x$. 

    In order to prove that $E_x$ is a complement of $T_x \M$ in $S^2(H^3)^*$, we need to prove that the $n+1$ linear forms $\frac{\partial q}{\partial x^a}(\cdot,x) \in (H^3)^*$ are linearly independent. By \eqref{eq:spandelq}, we have
    \begin{align*}
        e^{\F/3} \frac{\partial q}{\partial x^a}(\cdot,x) & = x^r\F_{akr} dx^k - \frac{1}{3} x^r \F_a \G_{kr}dx^k  \\
        & = - 2 \G_{ak}dx^k + \frac{1}{3} \F_a \F_k dx^k \\
        & = - 2\G \left( \frac{\partial}{\partial x^a}, \cdot \right) + \frac{1}{3} \frac{\partial \F}{\partial x^a} \cdot d\F .
    \end{align*}
    where we used the identities \eqref{eq:idxf} to pass from the first to the second line. After a linear change of coordinates, we may assume that $\frac{\partial}{\partial x^1}, \ldots, \frac{\partial}{\partial x^n}$ are tangent to the level set of $\F$ at the point $x$. Hence we just have $\frac{\partial q}{\partial x^a}(\cdot, x) = -2 e^{-\F/3} \G(\partial_a, \cdot)$ for $1 \leq a \leq n$, and this gives $n$ independent linear forms. Moreover, using Remark \ref{rem:idgf} we can compute that
    \begin{equation*}
        e^{\F/3} x^a \frac{\partial q}{\partial x^a}(\cdot, x) = - 2 x^a \G \left( \frac{\partial}{\partial x^a}, \cdot \right) + \frac{1}{3} x^a\frac{\partial \F}{\partial x^a} \cdot d\F = 2 d\F - \frac{7}{3} d\F = - \frac{1}{3} d\F = \frac{1}{3} \G(x,\cdot)
    \end{equation*}
    and this gives another linear form independent from the previous ones, since $x^a \frac{\partial}{\partial x^a}$ is linearly independent of $\frac{\partial}{\partial x^1}, \ldots , \frac{\partial}{\partial x^r}$ (as $x^a \F_a \neq 0$). Thus the $n+1$ linear forms $\frac{\partial q}{\partial x^a}(\cdot, x)$, $a = 0,\ldots, n$, are independent. Hence $T_x\M$ is a complement of $E_x$ in $S^2(H^3)^*$, which completes the proof of the theorem.
\end{proof}

    \subsection{Further comments}       \label{subsection:comments}

We finish this article with some comments concerning the relation between the map $\Phi$ and the map $\varphi \D \in \M \mapsto [\varphi]+[\Theta(\varphi)] \in H$, which was proved by Joyce to be a Lagrangian immersion \cite{joyce1996compactii}. The relation is better explained using the theory of exterior differential systems, and is very similar to the results of Bryant--Griffiths \cite{bryant1983some} about the periods of Calabi--Yau threefolds. 

The idea is the following. Note that $[\varphi]+[\Theta(\varphi)]$ spans $H^{(3)}_\varphi$, and if we restrict our attention to $\M_1$ it follows that the map $\varphi \D \in \M_1 \mapsto H^{(3)}_\varphi \in \mathbb{P}(H)$ is a Legendrian immersion for the contact system induced by $Q$ on $\mathbb{P}(H)$. This map takes values in the open subset $U \subset \mathbb{P}(H)$ defined by
\begin{equation*}
    U = \{ \Span \{w \}, ~ Q(\iota(w),w) > 0 \} .
\end{equation*}
There is a homogeneous fibration $\Dom \rightarrow U$ mapping $\flag \in \Dom$ to $H^{(3)}$, and it turns out that this identifies $\Dom$ with an open subset of the space of \emph{maximal integral elements} of the contact system over $U$. Moreover, the exterior differential system on $\Dom$ corresponding to the transversality condition can be shown to be the restriction to $\Dom$ of the \emph{first prolongation} of the contact system. Below we sketch the proofs of the above claims.

Let us consider coordinates $(w^0,\ldots,w^n,w_0,\ldots,w_n)$ on $H$ such that
\begin{equation*}
    Q = \sum_{j=0}^n dw_j \wedge dw^j, ~~~ \text{and} ~~~ \iota(w_j,w^j) = (w^j,w_j) .
\end{equation*}
In homogeneous coordinates $[w^0 = 1:w^1:\cdots:w^n,w_0:\cdots:w_n]$ on $\mathbb{P}(H)$, the contact system is generated by the $1$-form
\begin{equation}        \label{eq:gammacanonical}
    \alpha =  dw_0 + \sum_{j=1}^n w^j dw_j - w_j dw^j
\end{equation}
and in particular:
\begin{equation}        \label{eq:dgammacanonical}
    d\alpha = -2 \sum_{j=1}^n dw_j \wedge dw^j .
\end{equation}
It is a classical fact that the integral submanifolds of the contact system have dimension at most $n$. Let us define $V_n(U)$ to be the space of $n$-dimensional integral elements of the contact system restricted to $U$. If $H^{(3)} \in U$ and we choose our previous coordinates such that $H^{(3)} = \{ dw^1 = \cdots = dw^n = dw_0 = \cdots dw_n = 0 \}$, then the integral elements of the contact system lying over $H^{(3)}$ are defined by the equations
\begin{equation*}
    dw_0 = 0, ~~ \text{and} ~~ \sum_{j=1}^n dw_j \wedge dw^j = 0 .
\end{equation*}
Therefore the space of such integral elements can be defined as the set of $n$-dimensional subspaces $H^{(2)} \subset H$ satisfying:
\begin{equation*}
    Q(\iota(H^{(3)}),H^{(2)}) = Q(H^{(3)},H^{(2)}) = Q(H^{(2)},H^{(2)}) = 0.
\end{equation*}
Thus if an integral element satisfies the additional open property that $Q(\iota(H^{(2)}),H^{(2)}) < 0$, then comparing with properties \textbf{(1)}, \textbf{(2)} and \textbf{(3)} we see that $H^{(3)}$, $H^{(2)}$, $H^{(1)} = \iota(H^{(2)})$ and $H^{(0)} = \iota(H^{(3)})$ define an element of $\Dom$. Conversely, for any $\flag \in \Dom$, $H^{(2)}$ is an integral element of the contact system lying over $H^{(3)} \in U$. This proves that $\Dom$ can be identified with an open subset of the space of maximal integral elements of the contact system over $U$, and the fact that the restriction to $\Dom$ of the first prolongation of the contact system coincides with the exterior differential system corresponding to the transversality condition is essentially a consequence of Lemma \ref{lem:hortangent}.

Therefore, there is a one-to-one correspondence between the Legendrian submanifolds $N \subset U \subset \mathbb{P}(H)$ whose tangent space satisfies $Q(\iota(TN),TN) < 0$ and the maximal transverse submanifolds of $\Dom$ such that the restriction of the quadratic form $h_\Dom$ is nondegenerate. Under this correspondence, the immersion $\M_1 \rightarrow \mathbb{P}(H)$ is associated with the map $\Phi : \M_1 \rightarrow \Dom$, and up to a numerical factor the metric $\G_1$ is precisely the restriction of the quadratic form $h_\Dom$.


\addcontentsline{toc}{section}{References}

\small

\bibliographystyle{abbrv}
\bibliography{ref_moduligeom}

\end{document}